\title[Linear programming bounds for hyperbolic surfaces]{Linear programming bounds for hyperbolic surfaces}
\author{Maxime Fortier Bourque}
\address{D\'epartement de math\'ematiques et de statistique, Universit\'e de Montr\'eal, 2920, chemin de la Tour, Montr\'eal (QC), H3T 1J4, Canada}
\email{maxime.fortier.bourque@umontreal.ca}
\author{Bram Petri}
\address{Institut de Math\'ematiques de Jussieu--Paris Rive Gauche ; UMR7586, Sorbonne Universit\'e - Campus Pierre et Marie Curie,
4, place Jussieu, 75252 Paris Cedex 05, France}
\email{bram.petri@imj-prg.fr}
\date{\today}
\date{\today}
\numberwithin{equation}{section}
\newtheorem{thm}{Theorem}[section]
\newtheorem{prop}[thm]{Proposition}
\newtheorem{cor}[thm]{Corollary}
\newtheorem{lem}[thm]{Lemma}
\newtheorem{conj}[thm]{Conjecture}
\newenvironment{thmrep}[1]
  {\innerthmrep}
  {\endinnerthmrep}
\theoremstyle{definition}
\newtheorem{defn}[thm]{Definition}
\theoremstyle{remark}
\newtheorem{rem}[thm]{Remark}
\newtheorem*{acknowledgements}{Acknowledgements}
\newcommand{\thmref}[1]{Theorem~\ref{#1}}
\newcommand{\propref}[1]{Proposition~\ref{#1}}
\newcommand{\secref}[1]{Section~\ref{#1}}
\newcommand{\lemref}[1]{Lemma~\ref{#1}}
\newcommand{\corref}[1]{Corollary~\ref{#1}}
\newcommand{\figref}[1]{Figure~\ref{#1}}
\newcommand{\tableref}[1]{Table~\ref{#1}}
\newcommand{\nc}{\newcommand}
\nc{\dmo}{\DeclareMathOperator}
\nc{\abs}[1]{\left| #1 \right|}
\nc{\bigO}[1]{O\left(#1\right)}
\nc{\card}[1]{\left|#1\right|}
\nc{\ceil}[1]{\left\lceil #1 \right\rceil}
\nc{\CC}{\mathbb{C}}
\nc{\dilog}{\mathcal{L}}
\nc{\floor}[1]{\left\lfloor #1 \right\rfloor}
\nc{\ind}{\mathds{1}}
\nc{\ZZ}{\mathbb{Z}}
\nc{\len}[1]{\left| #1 \right|}
\nc{\littleo}[1]{o\left(#1\right)}
\dmo{\Mat}{Mat}
\nc{\NN}{\mathbb{N}}
\nc{\norm}[1]{\left|\left| #1 \right|\right|}
\nc{\QQ}{\mathbb{Q}}
\nc{\RR}{\mathbb{R}}
\nc{\st}[2]{\left\{\, #1 \,:\, #2\,\right\}}
\dmo{\supp}{supp}
\nc{\tr}[1]{\mathrm{tr}\left(#1\right)}
\nc{\what}{\widehat}
\dmo{\im}{Im}
\dmo{\re}{Re}
\nc{\eps}{\varepsilon}
\dmo{\li}{li}
\dmo{\arccosh}{arccosh}
\dmo{\arcsinh}{arcsinh}
\dmo{\area}{area}
\dmo{\conv}{conv}
\dmo{\diam}{diam}
\dmo{\DD}{\mathbb{D}}
\dmo{\dist}{\mathrm{d}}
\nc{\HH}{\mathbb{H}}
\dmo{\Isom}{Isom}
\dmo{\MCG}{MCG}
\dmo{\MPL}{MPL}
\dmo{\Mod}{\mathcal{M}}
\dmo{\PL}{PL}
\nc{\Sphere}{\mathbb{S}}
\dmo{\sys}{sys}
\dmo{\kiss}{kiss}
\dmo{\Teich}{\mathcal{T}}
\nc{\Torus}{\mathbb{T}}
\dmo{\vol}{vol}
\dmo{\WP}{WP}
\nc{\Nsmall}{N_\mathrm{small}}
\dmo{\rect}{rect}
\dmo{\convTV}{\;\stackrel{\mathrm{TV}}{\longrightarrow}\;}
\nc{\ExV}[2]{\mathbb{E}_{#1}\left[#2\right]}
\dmo{\EE}{\mathbb{E}}
\nc{\Pro}[2]{\mathbb{P}_{#1}\left[#2\right]}
\dmo{\PP}{\mathbb{P}}
\nc{\distTV}[2]{\mathrm{d}_{\rm TV}\left(#1,#2\right)}
\dmo{\UU}{\mathbb{U}}
\nc{\Var}[2]{\mathbb{V}\mathrm{ar}_{#1}\left[#2\right]}
\dmo{\alt}{\mathfrak{A}}
\dmo{\Aut}{Aut}
\dmo{\Fix}{Fix}
\dmo{\GL}{GL}
\dmo{\Hom}{Hom}
\dmo{\id}{Id}
\dmo{\PSL}{PSL}
\dmo{\PGL}{PGL}
\dmo{\PO}{PO}
\dmo{\Rep}{Rep}
\dmo{\SL}{SL}
\dmo{\SO}{SO}
\dmo{\sym}{\mathfrak{S}}
\dmo{\inv}{\mathcal{I}}
\dmo{\orb}{\mathcal{O}}
\dmo{\stab}{Stab}
\dmo{\calA}{\mathcal{A}}
\dmo{\calB}{\mathcal{B}}
\dmo{\calC}{\mathcal{C}}
\dmo{\calD}{\mathcal{D}}
\dmo{\calE}{\mathcal{E}}
\dmo{\calF}{\mathcal{F}}
\dmo{\calG}{\mathcal{G}}
\dmo{\calH}{\mathcal{H}}
\dmo{\calI}{\mathcal{I}}
\dmo{\calJ}{\mathcal{J}}
\dmo{\calK}{\mathcal{K}}
\dmo{\calL}{\mathcal{L}}
\dmo{\calM}{\mathcal{M}}
\dmo{\calN}{\mathcal{N}}
\dmo{\calO}{\mathcal{O}}
\dmo{\calP}{\mathcal{P}}
\dmo{\calQ}{\mathcal{Q}}
\dmo{\calR}{\mathcal{R}}
\dmo{\calS}{\mathcal{S}}
\dmo{\calT}{\mathcal{T}}
\dmo{\calU}{\mathcal{U}}
\dmo{\calV}{\mathcal{V}}
\dmo{\calW}{\mathcal{W}}
\dmo{\calX}{\mathcal{X}}
\dmo{\calY}{\mathcal{Y}}
\dmo{\calZ}{\mathcal{Z}}
\begin{document}

\begin{abstract}
We adapt linear programming methods from sphere packings to closed hyperbolic surfaces and obtain new upper bounds on their systole, their kissing number, the first positive eigenvalue of their Laplacian, the multiplicity of their first eigenvalue, and their number of small eigenvalues. Apart from a few exceptions, the resulting bounds are the current best known both in low genus and as the genus tends to infinity. Our methods also provide lower bounds on the systole (achieved in genus $2$ to $7$, $14$, and $17$) that are sufficient for surfaces to have a spectral gap larger than $1/4$.  \\
\end{abstract}

\maketitle

\section{Introduction}

The goal of this paper is to prove new upper bounds on five invariants associated to a closed, oriented, hyperbolic surface $M$:
\begin{enumerate}
\item its \emph{systole} $\sys(M)$, the length of any shortest non-contractible closed curve in $M$;
\item its \emph{kissing number} $\kiss(M)$, the number of homotopy classes of oriented non-contractible closed curves of minimal length in $M$;
\item the \emph{first positive eigenvalue} $\lambda_1(M)$ (which coincides with the \emph{spectral gap}) of the Laplace--Beltrami operator $\Delta_M$ on $M$;
\item the \emph{multiplicity} $m_1(M)$ of the eigenvalue $\lambda_1(M)$, that is, the dimension of the corresponding eigenspace;
\item the number $\Nsmall(M)$, counting multiplicity, of \emph{small eigenvalues} of $\Delta_M$, that is, those contained in the interval $[0,1/4]$.
\end{enumerate}

We bound the first four invariants in terms of the genus of $M$ only, but the fifth one in terms of the genus and the systole. In low genus (or for small systole), our bounds are illustrated in Figures \ref{fig:plots1} and \ref{fig:plots2} (see also Tables \ref{table:sys} to \ref{table:gaps}). They beat all previous upper bounds except for the systole and kissing number in genus $2$, for $\lambda_1$ in genus $2$, $3$, $4$, and $6$, and for $\Nsmall$ when the systole is smaller than $2.317$. Note that $\Nsmall(M)<2$ if and only if $\lambda_1(M)>1/4$. We use this to show that there exist surfaces with a spectral gap larger than $1/4$ in genus $4$ to $7$, $14$, and $17$ (this was already known in genus $2$ and $3$). Whether such surfaces exist in every genus is a well-known open problem related to Selberg's eigenvalue conjecture \cite{Selberg} (see e.g. \cite[Question 1.1]{Mondal} and \cite[Problem 10.4]{Wright}). A lot of progress on this question in high genus was made recently \cite{covers,WuXue,LipnowskiWright,HideMagee,AnantharamanMonk, AnantharamanMonk2, MageePudervanHandel, polynomial_rate}.

\begin{figure}[H]

\begin{subfigure}{.48\textwidth}
\centering
\includegraphics[width=\textwidth]{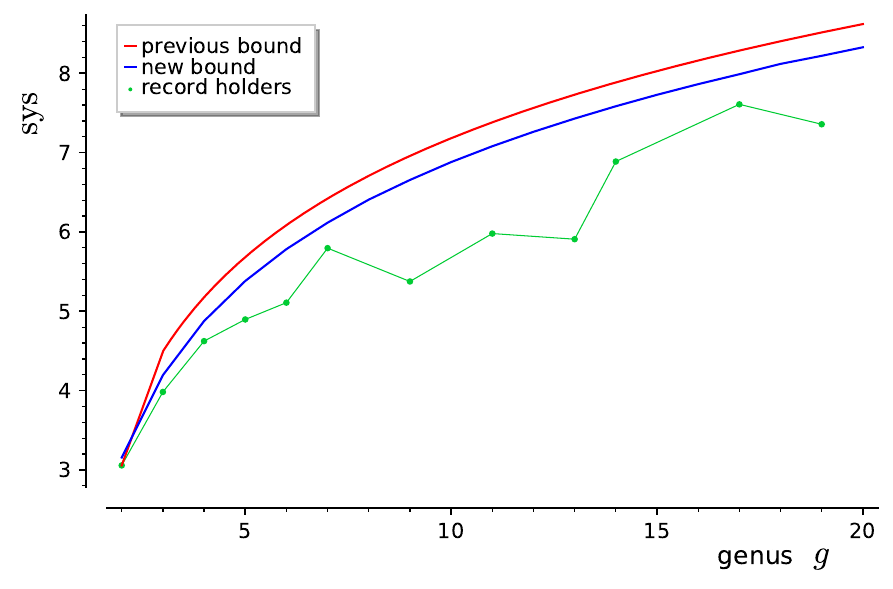}
\subcaption{Systole}
\label{fig:sys}
\end{subfigure}\hfill%
\begin{subfigure}{.48\textwidth}
\centering
\includegraphics[width=\textwidth]{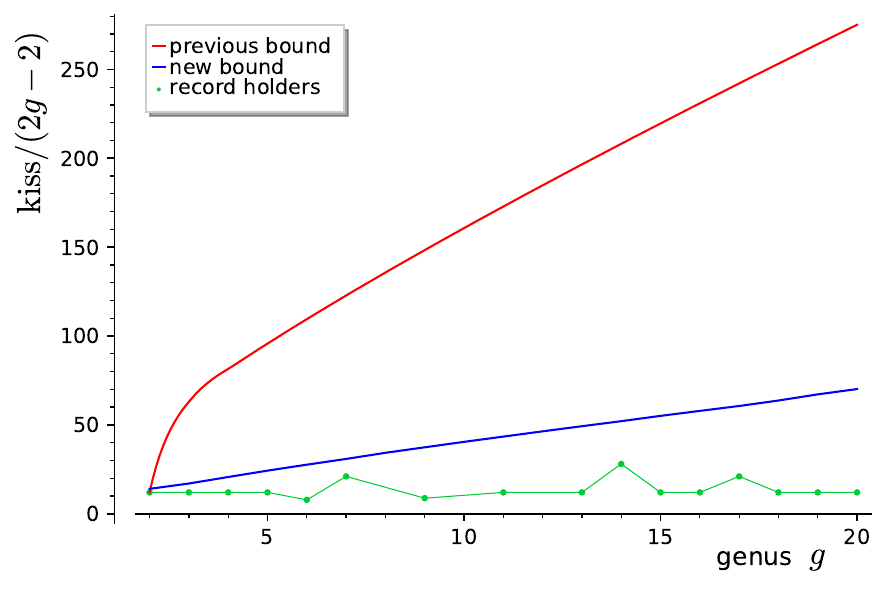}
\subcaption{Kissing number}
\label{fig:kiss}
\end{subfigure}\\

\begin{subfigure}{.48\textwidth}
\centering
\includegraphics[width=\textwidth]{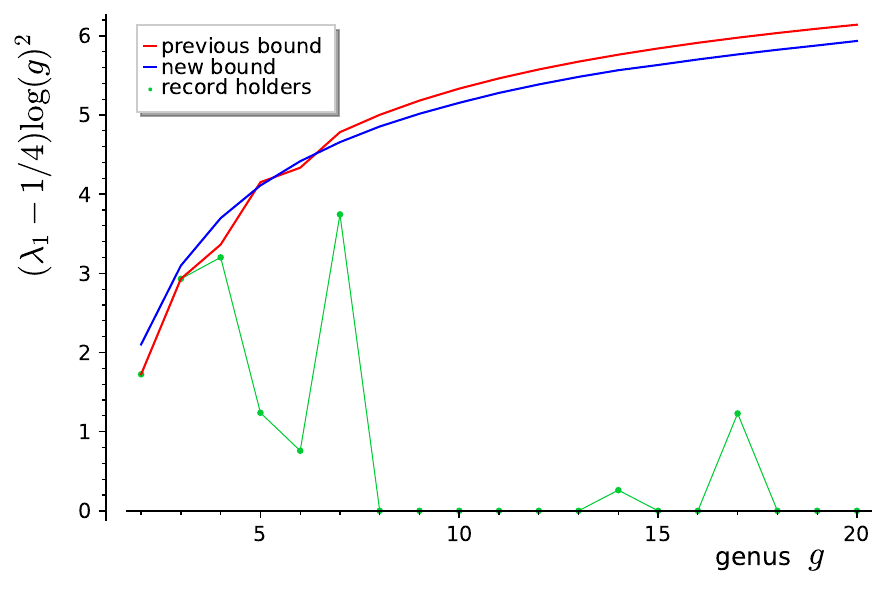}
\subcaption{First eigenvalue}
\label{fig:lam}
\end{subfigure}\hfill%
\begin{subfigure}{.48\textwidth}
\centering
\includegraphics[width=\textwidth]{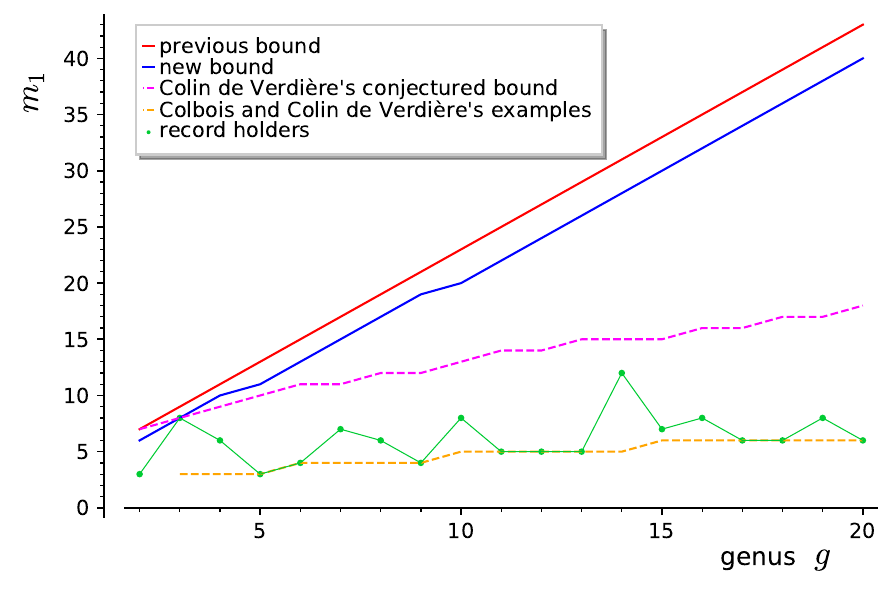}
\subcaption{Multiplicity}
\label{fig:mult}
\end{subfigure}\\

\caption{Upper bounds  and current record holders for the maximization of geometric and spectral invariants associated to hyperbolic surfaces.}
\label{fig:plots1}
\end{figure}

In higher genus (or for larger systole), our asymptotic bounds are as follows.

\begin{thmrep}{\ref*{thm:systole_asymp}}
There exists some $g_0 \geq 2$ such that every closed hyperbolic surface $M$ of genus $g\geq g_0$ satisfies
\[
\sys(M) < 2\log(g) + 2.409.
\]
\end{thmrep}

\begin{thmrep}{\ref*{thm:kiss_asymp}}
There exists some $g_0 \geq 2$ such that every closed hyperbolic surface $M$ of genus $g\geq g_0$ satisfies
\[
\kiss(M) < \frac{30.608 \cdot g^2}{\log(g)+1.2045}.
\]
\end{thmrep}

\begin{thmrep}{\ref*{thm:lamb_asymp}}
There exists some $g_0 \geq 2$ such that every closed hyperbolic surface $M$ of genus $g\geq g_0$ satisfies
\[
\lambda_1(M) < \frac14 + \left(\frac{\pi}{\log(g)+0.7436}\right)^2.
\]
\end{thmrep}

\begin{thmrep}{\ref*{thm:mult_asymp}}
There exists some $g_0 \geq 2$ such that every closed hyperbolic surface $M$ of genus $g\geq g_0$ satisfies
\[
m_1(M) \leq 2g - 1.
\]
\end{thmrep}

\begin{thmrep}{\ref*{thm:small_asymp}}
If $M$ is a closed hyperbolic surface of genus $g\geq 2$, then
\[
\Nsmall(M) <  \min\left( \frac{24 \pi^2(g-1)}{\sys(M)^3}, \frac{16(g-1)}{\sys(M)^2} \right).
\]
\end{thmrep}

These improve upon the previous best upper bounds established in \cite{Bavard}, \cite{KissingManifolds} (previously \cite{Parlier}), \cite{Cheng}, \cite{Sevennec}, and \cite{HuberSmallEigs} respectively. While the previous bounds used very different techniques from one invariant to another, our proofs are all based on the same method, namely, linear programming applied to the Selberg trace formula. Note that we have made no effort to estimate the required lower bound $g_0$ on genus in the above theorems, though this could  in principle be done by making certain calculations of limits effective.

\begin{figure}[htp]

\begin{subfigure}{.49\textwidth}
\centering
\includegraphics[width=\textwidth]{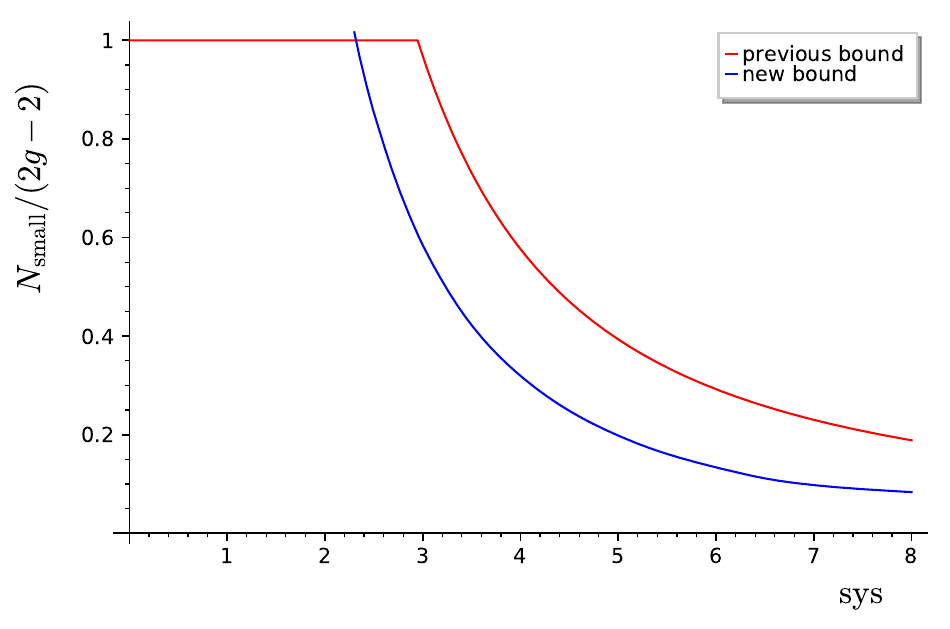}
\subcaption{Number of small eigenvalues. }
\label{fig:nsmall}
\end{subfigure}\hfill%
\begin{subfigure}{.49\textwidth}
\centering
\includegraphics[width=\textwidth]{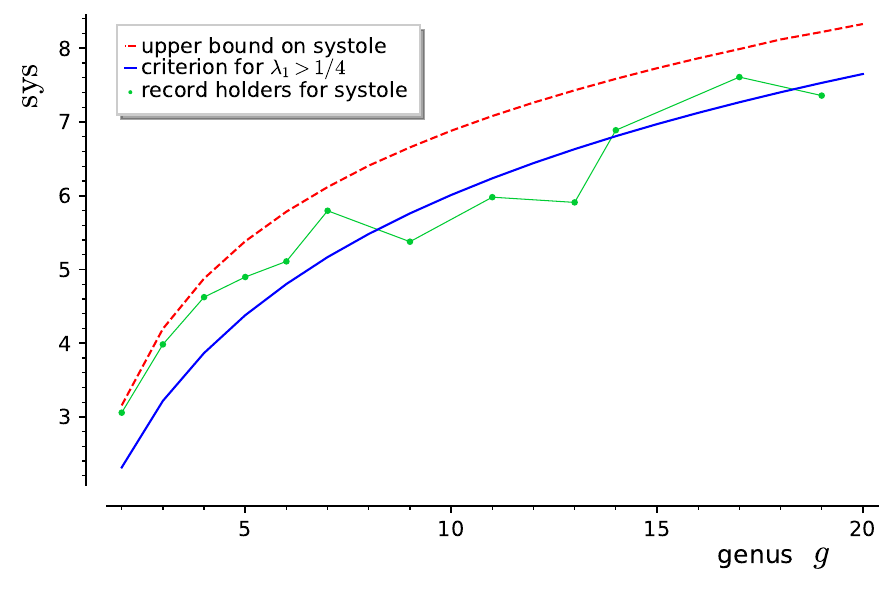}
\subcaption{Criterion for $\lambda_1>1/4$.}
\label{fig:gaps}
\end{subfigure}\\

\caption{Upper bounds on the number of small eigenvalues and lower bounds on the systole that imply a spectral gap larger than a quarter.}
\label{fig:plots2}
\end{figure}

In addition to finding new examples of surfaces with $\lambda_1 > 1/4$ as mentionned earlier, we also improve the previous best lower bounds of Colbois and Colin de Verdière \cite{CCV} on the maximum of $m_1$ in genus $4$, $7$, $8$, $10$, $14$ to $16$, and $19$. Note that Colin de Verdière's conjectured upper bound \cite{CdV86,CdV} on $m_1$ appearing in \figref{fig:mult} has since been disproved in genus 10 and 17 \cite{counterexamples}.

\subsection*{Context}

The invariants $\sys$, $\kiss$, $\lambda_1$, $m_1$, and $\Nsmall$ can be defined for any closed Riemannian manifold (with $1/4$ replaced by the bottom of the spectrum of the Laplacian on the universal cover of $M$ in the case of $\Nsmall$) and their ma\-xi\-mization has been studied by several authors for va\-rious classes of manifolds. These shape optimization problems seek to find optimal inequalities analogous to the isoperimetric inequality for other geometric quantities.

For example, if we fix a topological manifold $\Sigma$, then maximizing the systole over all Rieman\-nian manifolds $M$ of a given volume that are homeomorphic to $\Sigma$ is called the \emph{isosystolic problem} and it has been solved for the projective plane \cite{Pu}, the $2$-dimensional torus (Loewner), and the Klein bottle \cite{BavardKlein}. The maximum of $m_1$ is known for the same surfaces \cite{Besson,CdV,NadirashviliMultiplicite} as well as for the $2$-sphere \cite{Cheng}, while the maximum of $\lambda_1$ is known for the closed orientable surface of genus $2$ \cite{Jakobson,NayataniShoda} in addition to all the previous surfaces \cite{Hersch,LiYau,Nadirashvili,ElSoufi}. 

Another much-studied case is that of flat $d$-dimensional tori of unit volume. In that case, $\lambda_1(M)=(2\pi \sys(M^*))^2$ and $m_1(M)=\kiss(M^*)/2$ where $M^*$ is the torus dual to $M$, so the first four maximization problems above reduce to only two while the fifth is trivial since the bottom of the spectrum of the Laplacian on $\RR^d$ is $0$. Furthermore, if $\Lambda$ is a lattice such that $M \cong \RR^d / \Lambda$, then the balls of radius $\sys(M)/2$ centered at the points in $\Lambda$ form a sphere packing of density $2^{-d}\sys(M)^d \vol(B_1^d)$, where $B_1^d$ is the unit ball in $\RR^d$, and there are exactly $\kiss(M)$ balls tangent to any ball in this packing. In other words, maximizing the systole and kissing number of flat tori of unit area is equivalent to maximizing the packing density and kissing number among sphere packings whose centers form a lattice. Both problems have been solved in dimensions $1$ to $8$ and $24$ (see \cite[Table 1.1]{ConwaySloane} and \cite{CohnKumar}). In some cases, the solutions to these problems were obtained by a method known as \emph{linear programming} first introduced by Delsarte in the context of error-correcting codes \cite{Delsarte}. This was then adapted to prove bounds on the kissing numbers of arbitrary sphere packings (not necessarily coming from lattices) in \cite{DGS} and then on their packing density in \cite{CohnElkies}. In addition to giving optimal bounds in dimensions $8$ \cite{Viazovska} and $24$ \cite{CKMRV}, this approach also yields the best known asymptotic bounds on kissing numbers and packing density as the dimension tends to infinity \cite{CohnZhao}.

The five invariants we consider here were previously investigated for hyperbolic surfaces in \cite{SchmutzMaxima, BuserSarnak, Bavard}, \cite{SchmutzKissing, Schmutz43, Parlier}, \cite{Cheng, HideMagee, bootstrap}, 
\cite{CCV, LetrouitMachado}, and \cite{HuberSmallEigs,OtalRosas}, among many others. Prior to our work, the only optimal bounds known were for the systole \cite{Jenni} and kissing number \cite{SchmutzKissing} in genus $2$, and for $\Nsmall$ in every genus \cite{OtalRosas}. The known examples that ma\-xi\-mize $\Nsmall$ have a short pants decomposition \cite{Buser14}, which is why we are interested in improved bounds as the systole grows. We added one new optimal inequality to the list in \cite{Klein}, where we determined the maximum of $m_1$ in genus $3$ using the same techniques as in the current paper. Here we extend and improve this method in a systematic way. Although our new bounds in low genus do not appear to yield any new optimizers, they are a first step in that direction. The next step would be to compute the invariants for more examples.

\subsection*{Organization}

The paper is organized as follows. We start with preliminary sections on the Fourier transform, the Selberg trace formula, the linear programming method, and Bessel functions. This is followed by one section for each of the five invariants $\sys$, $\kiss$, $\lambda_1$, $m_1$, and $\Nsmall$. In each of these sections, we first present a general criterion for proving  upper bounds based on the Selberg trace formula. We then discuss the results that we have obtained from this criterion in low genus (or for small systole) using numerical optimization and conclude each section by proving an asymptotic bound. In both ranges, we compare our bounds with the previous best. The lower bounds on the systole that are sufficient to obtain a spectral gap larger than a quarter are described in subsection \ref{subsec:gaps} and the new examples with large $m_1$ are presented in subsection \ref{subsec:multiplicity_examples}.

\begin{acknowledgements}
We thank the CIRM for a research in residence during which part of this work was carried out. We also thank Mathieu Pineault for uncovering a mistake in one of our ancillary files and Chul-hee Lee for finding an error in a previous justification for the lower bound on the maximum of $m_1$ in genus $14$. These have since been fixed. BP is grateful for the ``Tremplins nouveaux entrants'' grant from Sorbonne Universit\'e, which allowed him to visit MFB at the Universit\'e de Montr\'eal.  MFB was partially supported by NSERC Discovery Grant RGPIN-2022-03649. Finally, we thank the anonymous referee for their careful reading and for suggesting much shorter proofs of Lemmas \ref{lem:behaviour_near_end} and \ref{lem:limit} with nicer resulting formulas.
\end{acknowledgements}

\section{The Fourier transform}

The Fourier transform of an integrable function $f:\RR \to \CC$ is defined by
\[
\what{f}(y) := \frac{1}{\sqrt{2\pi}}\int_\RR f(x)e^{-i y x} dx
\]
for $y\in \RR$. If $\what{f}$ is integrable, then the Fourier inversion theorem says that its Fourier transform is almost everywhere equal to $x \mapsto f(-x)$.

We will frequently use the scaling property that for $a>0$, the Fourier transform of $x \mapsto f(ax)$ is $y \mapsto \what{f}(y/a)/a$. If $f$ and $g$ are integrable, then 
\[
\what{f\ast g} =\sqrt{2\pi} \, \what{f} \, \what{g}
\]
where $\ast$ denotes the convolution and if $\what{f}$ and $\what{g}$ are also integrable, then
\[
\what{fg} = \frac{1}{\sqrt{2\pi}}\what{f} \ast \what{g}.
\]

When $f$ is an even function, which will be the case throughout the paper, its Fourier transform reduces to the cosine transform
\[
\what{f}(y) = \sqrt{\frac{2}{\pi}}\int_{0}^\infty f(x)\cos(yx) \,dx
\]
and is therefore even.

An continuous integrable function $f$ with integrable Fourier transform is said to be \emph{positive-definite} if $\what{f}(y) \geq 0$ for every $y \in \RR$. This is not the usual definition, but is equivalent to it under these hypotheses, by Bochner's theorem. If $f_1$ and $f_2$ are positive-definite (and continuous, integrable, with integrable Fourier transform), then $f_1 \ast f_2$ is positive-definite provided that $\what{f_1}\what{f_2}$ is integrable, and $f_1f_2$ is positive-definite provided that it is integrable.

\section{The Selberg trace formula}

Given a closed hyperbolic surface $M$  (always assumed to be oriented), we list the eigenvalues of its Laplace--Beltrami operator (acting on square-integrable functions) in non-decreasing order 
\[
0 = \lambda_0(M)<\lambda_1(M)\leq \lambda_2(M) \leq \cdots
\]
where each eigenvalue is repeated according to its multiplicity, i.e., the dimension of the corresponding eigenspace.

The set of oriented closed geodesics in $M$ is denoted by $\calC(M)$. This means that each unoriented closed geodesic appears twice in $\calC(M)$, once for each orientation. The length of a geodesic $\gamma \in \calC(M)$ is denoted $\ell(\gamma)$ and its \emph{primitive length} is denoted $\Lambda(\gamma)$. The latter is defined as the length of the shortest geodesic $\alpha$ such that $\gamma=\alpha^k$ for some power $k\geq 1$. The geodesic $\alpha$ is called \emph{primitive} because it cannot be expressed as a proper power of another geodesic.

A function $f:\RR \to \CC$ is said to be \emph{admissible} if it is even, continuous, integrable, and its Fourier transform $\what{f}$ is holomorphic in a horizontal strip of the form 
\[
\left\{ z \in \CC :| \im(z)| < \frac12+\eps \right\}
\] 
for some $\eps>0$ and satisfies the decay condition
\[
|\what{f}(z)| = O\left( \frac{1}{1+|z|^p} \right)
\]
for some $p>2$ in that strip. Note that the decay condition implies that $\what{f}$ and $y\what{f}(y)$ are integrable on the real line so that $f$ must be continuously differentiable by Fourier inversion and differentiation under the integral sign.

With the above notation and normalizations, the Selberg trace formula \cite[Section 9.5]{Buser} states that for every closed hyperbolic surface $M$ of genus $g$ and every admissible function $f:\RR \to \CC$, we have
\begin{equation*}
\sum_{j=0}^\infty \what{f}\left(\textstyle\sqrt{\lambda_j(M)-\frac14}\right) = 2(g-1)\int_0^\infty  \what{f}(x) x \tanh(\pi x) \,dx+ \frac{1}{\sqrt{2\pi}}\sum_{\gamma \in \calC(M)} \frac{\Lambda(\gamma)f(\ell(\gamma))}{2 \sinh(\ell(\gamma)/2)} .
\end{equation*}
Since $\what{f}$ is even, it does not matter which square root we use on the left-hand side. It is customary to write $r_j(M)$ for either of the two roots, so that $r_j(M)^2+\frac14 = \lambda_j(M)$. Note that our convention for the Fourier transform differs from the one used in \cite{Buser} by a factor of $1/\sqrt{2\pi}$, which explains the appearance of this factor in the above formula. 

\section{Linear programming}  \label{sec:CohnElkies}

Like the linear programming bounds of Cohn and Elkies \cite{CohnElkies} for the density of sphere packings, for each of the five invariants $\sys$, $\kiss$, $\lambda_1$, $m_1$, and $\Nsmall$, our criterion will take the following form:
\begin{quotation}
Suppose that $f$ is an admissible function such that $f$ and $\what{f}$ satisfy certain linear inequalities over certain intervals. Then $f$ and $\what{f}$ produce a bound on the given invariant that holds for every closed hyperbolic surface (satisfying certain conditions) in a given genus. 
\end{quotation} 

This is called ``linear programming'' because the inequalities are linear in the sense that any positive linear combination of functions that satisfy the inequalities still satisfies the inequalities. However, the function to be optimized (the resulting bound) is not linear in $f$. Moreover, the space we are optimizing over is infinite-dimensional and there are infinitely many inequalities to check (one at each point in the specified intervals). For these reasons, classical linear programming algorithms do not work well, which led Cohn and Elkies to devise the following strategy (adapted here to our setting). 

The idea is to consider functions $f$ of the form $f(x) = p(x^2) e^{-x^2/2}$ where $p$ is a polynomial. Such a function is automatically admissible since its Fourier transform takes the form $\what{f}(y)=q(y^2) e^{-y^2/2}$ for some polynomial $q$, hence defines an entire function with super-exponential decay in any horizontal strip. Moreover, the map $p \mapsto q$ is linear. In fact, it is diagonal with entries $(-1)^n$ with respect to the basis of generalized Laguerre polynomials $L_n^{(-1/2)}$, which in turn are related to the Hermite polynomials $H_{2n}$ by
\[
L_n^{(-1/2)}(x^2) = \frac{(-1)^n}{2^{2n} n!}H_{2n}(x).
\]
The upshot of the linearity of the map $p\mapsto q$ is that it is possible to impose linear equations on both $f$ and $\what{f}$ \emph{simultaneously}. All one has to do is solve a linear system of equations to find the coefficients of $p$ and $q$. The conditions we impose are that $f$ and $\what{f}$ have double zeros at certain points $x_1,\ldots , x_m$ and $y_1,\ldots,y_n$ respectively.

The reason for imposing double zeros is that it prevents local changes of sign and with enough double zeros at appropriate locations we are usually able to find some functions $f$ and $\what{f}$ that satisfy the required inequalities. This is done by hand through trial and error. Once we have found such suitable zeros, we try to wiggle them to decrease the resulting bound  (using standard optimization routines from \texttt{SciPy}), then add more double zeros to the right of the previous ones and repeat. When we increase the genus for a given invariant, we restart this procedure but take as input a subset of the double zeros found for the previous genus. 

For sphere packing bounds this optimization scheme over double zeros appears to converge quickly to a unique optimal function $f$ in each dimension. We have not found this to be the case for hyperbolic surfaces. One important difference is that for sphere packings, Cohn and Elkies assume that $f$ and $\what{f}$ have the same double zeros and the situation is fairly symmetric. This is not the case with the Selberg trace formula and the actual optimizers for our problems appear to have only finitely many zeros in some cases (but not their Fourier transform). Indeed, imposing more zeros for $f$ usually makes our bounds worse and the zeros have a tendency to fly off to infinity or collide when we run the optimizer.

The strategy we have described above is the one we use in low genus (or for small systole). In high genus (or for large systole), our asymptotic bounds are obtained by using special test functions related to Bessel functions and optimizing over certain parameters.

\subsection{Certifying inequalities on intervals}

Despite the numerical optimization used to produce our bounds, the end results are rigorous. The reason is that we work with rational zeros and polynomials with rational coefficients, so the linear systems involved are solved exactly over the rational numbers. The polynomials we get thus have actual double zeros rather than approximate ones.

To ascertain that $f(x)=p(x^2)e^{-x^2/2} \geq 0$ for every $x$ in a given interval $[a,b]$, we apply Sturm's theorem to count the number of distinct roots of $p$ in that interval and make sure that there are no more than the number of imposed double zeros. This implies that neither $p$ nor $f$ changes sign on the interval, and it then suffices to check that $p$ is strictly positive at some point or that its second derivative is strictly positive at a double zero.

We will also sometimes need to certify inequalities involving transcendental functions over intervals. In these cases, we approximate the transcendental functions with truncated Taylor series and apply Sturm's theorem to these approximation. The functions we consider either have positive  or alternating Taylor coefficients, allowing us to know if the approximations are from below or above.

In some cases, we need to find the minimum of a function $h(x)=r(x)e^{-x/2}$ on an interval $[a,b]$, where $r$ is a polynomial. Since $h'(x)=(r'(x)-r(x)/2)e^{-x/2}$ we can use Sturm's theorem to verify that $h$ has at most one critical point on the interval. If it has one, then it suffices to verify that $r'(a)-r(a)/2 >0$ and $r'(b)-r(b)/2 < 0$ so that the critical point is a local maximum. The minimum of $h$ is then at one of the endpoints, and this is also true if there is no critical point in the interval. 

\subsection{Certifying error bounds on integrals}

Another difference with sphere packing bounds is that we have inequalities involving integrals that need to be checked. For example, one of our bounds requires that
\begin{equation} \label{eqn:example_integral_ineq}
\what{f}(i/2) \geq 2(g-1)\int_0^\infty \what{f}(x) x\tanh(\pi x) \, dx.
\end{equation}
To obtain functions that satisfy this inequality, we first compute numerical approximations $I_n$ of the integrals 
\[
\int_0^\infty L_n^{(-1/2)}(x^2)e^{-x^2/2} x\tanh(\pi x) \, dx.
\]
In our linear system of equations, we then impose that $\what{f}(i/2)=q(-1/4)e^{1/8}$ is equal to $\rho$ times the numerical approximation of $2(g-1)\int_0^\infty \what{f}(x) x\tanh(\pi x) \, dx$ (given by a linear combination of the approximations $I_n$), where $\rho>1$ is some rational number. Technically, we also replace $e^{1/8}$ by a rational approximation in this equation.

Once we have found a good candidate function $f$, we verify a posteriori that inequality \eqref{eqn:example_integral_ineq} is satisfied. This is done by evaluating the left-hand side using interval arithmetic (which provides true lower and upper bounds on $\what{f}(i/2)$) and finding certified bounds on the integral.

For a function $h$ that is analytic in a neighborhood of a compact interval $[a,b]$, the \texttt{Arb} package \cite{Johansson} in \texttt{SageMath} \cite{sagemath} is able to compute the integral $\int_a^b h(x)\,dx$ with certified error bounds. However, improper integrals (and in particular infinite intervals) cannot be handled. We thus use the \texttt{Arb} package to estimate $\int_0^b \what{f}(x) x\tanh(\pi x) \, dx$ for some large $b$ and then estimate the remainder $\int_b^\infty \what{f}(x) x\tanh(\pi x) \, dx$ separately. For this, we use the inequalities 
\[
\tanh(\pi b) \leq \tanh(\pi x) \leq 1
\] for $x\geq b$. In all cases, our hypotheses will require that $\what{f}$ is eventually of constant sign, so it remains to estimate
\[
\int_b^\infty \what{f}(x) x \, dx = \int_b^\infty x q(x^2)e^{-x^2/2} \, dx.
\]
However, since $xq(x^2)$ is an odd polynomial, the function $x q(x^2)e^{-x^2/2}$ admits an explicit primitive and the integral can be computed exactly.

We will sometimes have to deal with more complicated integrals, in which case we estimate the remainder terms using ad hoc inequalities.

\subsection{Ancillary files}

Whenever we require certified error bounds on integrals in a proof, we explain how to estimate these integrals in the proof and state the resulting estimate that was obtained using interval arithmetic in \texttt{SageMath}. The calculations behind these estimates are all contained in the \texttt{Jupyter} notebook \texttt{certified\_integrals.ipynb} attached as an ancillary file to the arXiv version of this paper.

Then there is one file \texttt{verify\_invariant.ipynb} for each of the invariants we consider. Each such file contains a function \texttt{invariant\_poly} which computes a pair of poly\-no\-mials $(p,q)$ such that $f(x)=p(x^2)e^{-x^2/2}$ and $\what{f}(x)=q(x^2)e^{-x^2/2}$ are the Fourier transform of one another given a list of double zeros for each and perhaps additional data. Another function \texttt{invariant\_verify} checks that all the required conditions on $f$ and $\what{f}$ are satisfied and outputs a resulting rigorous upper bound on the invariant in question. The lists of input parameters that we used to produce the bounds in Tables \ref{table:sys} to \ref{table:gaps} are stored in various files \texttt{parameters\_invariant.sobj} that are loaded in the last cell of the \texttt{verify\_invariant} notebook. Upon execution of this last cell, the program runs the \texttt{invariant\_verify} function on each of these input parameters and prints out the resulting bounds.

\section{Bessel functions} \label{sec:Bessel}

Bessel functions were used in \cite{CohnElkies} to obtain a new proof of the second best asymptotic upper bound on the density of sphere packings in $\RR^n$ due to Levenshtein \cite{Lev}. We will also use these functions to obtain our asymptotic bounds. We list some of their properties here for later reference.

One of the many equivalent definitions \cite[p.40]{Watson} of the \emph{Bessel function of the first kind of order $\alpha$} is
\[
J_\alpha(z) := \left( \frac{z}{2}\right)^{\alpha}\sum_{n=0}^\infty \frac{(-1)^n}{n!\, \Gamma(n+\alpha+1)} \left( \frac{z}{2}\right)^{2n}
\]
when $\alpha$ is not a negative integer, where $\Gamma$ is the classical gamma function. For non-integer orders $J_\alpha$ is a multi-valued function, but by abuse of notation the quotient

\[
\frac{J_\alpha(z)}{z^\alpha} = \frac{1}{2^\alpha} \sum_{n=0}^\infty \frac{(-1)^n}{n!\, \Gamma(n+\alpha+1)} \left( \frac{z}{2}\right)^{2n}
\]
defines an even entire function that takes the value $2^{-\alpha}/\Gamma(\alpha+1)$ at the origin. Following \cite{Gorbachev}, this leads us to define the \emph{normalized Bessel function}
\[
\eta_\alpha(z) := \Gamma(\alpha+1) \sum_{n=0}^\infty \frac{(-1)^n}{n!\, \Gamma(n+\alpha+1)} \left(\frac{z}{2}\right)^{2n} = 2^\alpha \Gamma(\alpha+1) \frac{J_\alpha(z)}{z^\alpha}
\]
satisfying $\eta_\alpha(0)=1$. For $\alpha > -1/2$, Poisson's integral formula for Bessel functions \cite[p.224, eq. (10.9.4)]{NIST} can be written as
\[
\eta_\alpha(x) = \frac{2}{B(\frac12,\alpha+\frac12)} \int_0^1 (1-t^2)^{\alpha-1/2} \cos(xt)\,dt
\]
where
\[
B(a,b) = \int_0^1 t^{a-1}(1-t)^{b-1} = \frac{\Gamma(a)\Gamma(b)}{\Gamma(a+b)}
\]
is the Beta function (note that there is a factor of 2 missing in \cite[eq. (2.8)]{Gorbachev}). This means that $\eta_\alpha$ is the Fourier transform of 
\[
\chi_\alpha(t) =  \frac{\sqrt{2\pi}}{B(\frac12,\alpha+\frac12)} \rect(t/2) (1-t^2)^{\alpha - 1/2}
\]
where $\rect$ is the characteristic function of the interval $\left[-\frac12,\frac12\right]$. By Fourier inversion, we have
\[
\what{\eta_\alpha}(t) =  \chi_\alpha(t)
\]
whenever $\alpha>1/2$, which is when $\eta_\alpha$ is integrable. In particular, $\eta_\alpha$ is positive-definite for $\alpha > 1/2$ and its Fourier transform is supported in $[-1,1]$. By the easy direction of the Paley--Wiener theorem, this implies that $\eta_\alpha$ has exponential type $1$. In fact, along the imaginary axis we have the following exact asymptotic for every $\alpha \geq -1/2$ \cite[p.203]{Watson}:
\begin{equation} \label{eq:Bessel_assymptotic_imag}
J_\alpha(ix) \sim \frac{e^{x}}{\sqrt{2\pi x}} \quad \text{as }  x \to \infty \text{ in }\RR. 
\end{equation}

The above integrability condition on $\psi_\alpha$ follows from the asymptotic formula
\begin{equation} \label{eq:Bessel_assymptotic_real}
J_\alpha(z) = \sqrt{\frac{2}{\pi z}} \left( \cos \left( z - \frac{\alpha \pi}{2} - \frac{\pi}{4} \right) + O(e^{|\im z|}/|z|) \right)
\end{equation}
as $|z| \to \infty$ with $|\arg z| < \pi$ \cite[p.364]{Handbook}.
Also note that $J_\alpha(x)$ vanishes to order $\alpha$ at the origin, so that for $\alpha \geq -1/2$ the function $x\mapsto \sqrt{x}J_\alpha(x)$ is bounded near the origin and hence on $(0,\infty)$ by continuity and the above asymptotic. 

We will frequently make use of the even entire functions
\[
\varphi_\alpha(z)=\frac{J_\alpha(z/2)^2}{z^{2\alpha}} = \left( \frac{\eta_\alpha(z/2)}{4^\alpha \Gamma(\alpha+1)} \right)^2
\]
and
\[
\psi_\alpha(z)=\frac{J_\alpha(z)^2}{z^{2\alpha}(1- (z/j_\alpha)^2)}
\]
where $j_\alpha$ is the first positive root of $J_\alpha$. These are such that $\varphi_\alpha(x) \geq 0$ for every $x \in \RR$ and $\psi_{\alpha}(x)\leq 0$ for $|x|\geq j_\alpha$. Up to positive constants, $\varphi_\alpha(2x)$ is equal to $\what{\chi_\alpha \ast \chi_\alpha}(x)$ so its Fourier transform is a positive constant multiple of $\chi_\alpha \ast \chi_\alpha \geq 0$ as long as $\varphi_\alpha$ and $\chi_\alpha$ are integrable, which holds whenever $\alpha>0$. In other words, $\varphi_\alpha$ is positive-definite if $\alpha >0$, with Fourier transform supported in $[-1,1]$. It was also shown in \cite[Remark 1.1]{Gorbachev} that $\psi_\alpha$ is positive-definite if $\alpha \geq -1/2$. Observe that $\what{\varphi_\alpha}$ is admissible if $\alpha > 1/2$ and $\what{\psi_\alpha}$ is admissible if $\alpha>-1/2$ by the asymptotic formula \eqref{eq:Bessel_assymptotic_real}.

\section{Systole}\label{sec:Systole}

\subsection{The criterion}
The systole of a closed hyperbolic surface is defined as the length of any of its shortest closed geodesics (also called systoles).  Our criterion for bounding the systole goes as follows.

\begin{thm} \label{thm:systole}
Let $g \geq 2$. Suppose that $f$ is a non-constant admissible function for which there exists an $R>0$ such that
\begin{itemize}
\item $f(x)\leq 0$ if $x \geq R$;
\item $\what{f}(\xi) \geq 0$ for every $\xi \in \RR \cup i\left[-\frac12,\frac12\right]$;
\item $\what{f}(i/2) \geq 2(g-1)\int_0^\infty \what{f}(x) x \tanh(\pi x) \,dx$.
\end{itemize}
Then $\sys(M) \leq R$ for every closed hyperbolic surface $M$ of genus $g$. 
\end{thm}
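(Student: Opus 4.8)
The plan is to feed the test function $f$ into the Selberg trace formula and extract a contradiction from the hypothesis $\sys(M) > R$. Suppose, for contradiction, that $M$ is a closed hyperbolic surface of genus $g$ with $\sys(M) > R$. Apply the trace formula to $f$, which is admissible by assumption:
\[
\sum_{j=0}^\infty \what{f}(r_j(M)) = 2(g-1)\int_0^\infty \what{f}(x) x \tanh(\pi x)\,dx + \frac{1}{\sqrt{2\pi}}\sum_{\gamma \in \calC(M)} \frac{\Lambda(\gamma) f(\ell(\gamma))}{2\sinh(\ell(\gamma)/2)}.
\]
The idea is that each of the three hypotheses on $f$ controls one piece of this identity, and together they force the left-hand side to be both $\geq$ and (strictly) $<$ a common quantity.

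First I would bound the geometric side from above. Since $\sys(M) > R$, every closed geodesic $\gamma \in \calC(M)$ has $\ell(\gamma) \geq \sys(M) > R$, so the hypothesis $f(x) \leq 0$ for $x \geq R$ gives $f(\ell(\gamma)) \leq 0$ for every $\gamma$. As $\Lambda(\gamma) > 0$ and $\sinh(\ell(\gamma)/2) > 0$, the entire geodesic sum is $\leq 0$. Hence the right-hand side of the trace formula is at most $2(g-1)\int_0^\infty \what{f}(x)x\tanh(\pi x)\,dx$, which by the third hypothesis is at most $\what{f}(i/2)$. Next I would bound the spectral side from below. The eigenvalues split into those $\lambda_j \in [0,1/4]$, for which $r_j = \sqrt{\lambda_j - 1/4} \in i[-1/2,1/2]$ (taking the appropriate root on the imaginary segment), and those $\lambda_j > 1/4$, for which $r_j \in \RR$. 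By the second hypothesis, $\what{f}(r_j) \geq 0$ in both cases, so $\sum_j \what{f}(r_j) \geq \what{f}(r_0) = \what{f}(i/2)$, the term coming from $\lambda_0 = 0$. Combining, $\what{f}(i/2) \leq \what{f}(i/2)$, which is not yet a contradiction — so the argument needs the strict inequality $\sys(M) > R$ to be genuinely exploited, or rather one observes the bound is non-strict and concludes $\sys(M) \leq R$ directly; I will instead argue by assuming $\sys(M) > R$ and deriving a strict contradiction.

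To get strictness, I would argue that at least one of the inequalities above is strict. Because $f$ is non-constant and admissible with $f(x) \leq 0$ for $x \geq R$, its Fourier transform $\what{f}$ cannot be identically zero, and in fact $\what{f}(i/2) > 0$: if $\what{f}(i/2) = 0$ then the third hypothesis forces $\int_0^\infty \what{f}(x)x\tanh(\pi x)\,dx \leq 0$, while $\what{f} \geq 0$ on $\RR$ forces this integral to be $\geq 0$, hence $\what{f} \equiv 0$ on $\RR$ and thus $f \equiv 0$, contradicting non-constancy. So the $j=0$ term contributes a strictly positive amount. Now if $\sys(M) > R$ strictly, then $M$ still has infinitely many closed geodesics (any closed hyperbolic surface does), and one examines whether the geodesic sum can be made to produce a strict deficit; the cleanest route is: since $\what{f}(i/2) > 0$ and equals its own upper bound only when the geodesic sum vanishes and the third hypothesis is an equality, I would instead directly conclude the non-strict bound $\sys(M) \leq R$ by contrapositive, namely show that $\sys(M) > R$ leads to $\what{f}(i/2) \leq \what{f}(i/2)$ with equality forcing every geodesic term to vanish — impossible since $f$ is continuous, not identically zero near its last sign change, so some $\ell(\gamma)$ slightly exceeding $R$ gives $f(\ell(\gamma)) < 0$ unless $f \equiv 0$ on $[R,\infty)$, and then analyticity-type or support arguments on $\what f$ close the gap.

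The main obstacle is precisely this last strictness bookkeeping: making sure the inequality $\sys(M) \leq R$ (rather than just $\sys(M) \le R + \text{something}$) comes out cleanly. The robust fix, which I expect the authors use, is to note that the conclusion $\sys(M) \leq R$ is what is claimed, and it follows by contraposition from: \emph{if} $\sys(M) > R$ \emph{then} we reach $\what{f}(i/2) < \what{f}(i/2)$, using that $\what f(i/2)>0$ together with the fact that a hyperbolic surface has a shortest geodesic $\gamma_0$ realizing the systole with $\ell(\gamma_0) = \sys(M) > R$, hence $f(\ell(\gamma_0)) \le 0$, and — crucially — one can even allow $f(\ell(\gamma_0)) = 0$ since the strict positivity already comes from comparing $\sum_j \what f(r_j) \ge \what f(i/2)$ against RHS $\le \what f(i/2)$: equality throughout would require the sum over \emph{all} $j \ge 1$ to vanish and the third hypothesis and geodesic sum to be tight, and ruling this out uses that $\what f \geq 0$ on $\RR$ is not identically zero so $\what f(r_j) > 0$ for some $j$, OR that the geodesic contributions are strictly negative. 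Organizing these cases is routine but fiddly; everything else is a direct substitution into the trace formula.
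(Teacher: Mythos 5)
Your reduction to $\what f(i/2)\le \what f(i/2)$ is correct, and you correctly sense that the whole weight of the proof lies in upgrading this to a contradiction. But your proposed fixes do not work, and the paper uses a genuinely different idea that you did not identify.

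The two closing arguments you sketch both fail. You suggest that since $\what f\ge 0$ on $\RR$ is not identically zero, $\what f(r_j)>0$ for some $j\ge 1$; but there is nothing preventing the entire spectrum $\{r_j(M)\}_{j\ge 1}$ from landing precisely on the zero set of $\what f$ for the particular surface $M$. Likewise, you suggest that some geodesic slightly longer than $R$ will give $f(\ell(\gamma))<0$; but the length spectrum could conceivably avoid the set where $f$ is strictly negative. For a single surface $M$, equality throughout the chain is a priori possible, and no pointwise argument rules it out. This is not ``routine but fiddly''; it is the crux.

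The paper's resolution is to make the argument \emph{over an open set in moduli space}. Since $\sys(M)>R$, the same strict inequality holds for all $N$ in a connected neighborhood $U$ of $M$, so the chain of inequalities and the resulting equalities $\what f(r_j(N))=0$ for $j\ge 1$ hold for \emph{every} $N\in U$. Because $\what f$ is holomorphic in a strip and non-constant, its zeros are isolated; combined with the continuous dependence of eigenvalues on the metric, this forces each $\lambda_j(N)$ to be a locally constant function of $N\in U$, i.e.\ all surfaces in $U$ are isospectral. Gel'fand's theorem that a hyperbolic surface admits no non-trivial continuous isospectral deformation then gives the contradiction. (For flat tori, Cohn and Elkies close the same gap with a rescaling argument instead, which is unavailable here.) This rigidity mechanism is the missing idea in your proposal.
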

\begin{proof}
Suppose that there is a hyperbolic surface $M$ of genus $g$ such that $\sys(M)> R$. Then $\sys(N)>R$ for every surface $N$ in some connected neighborhood $U$ of $M$ in moduli space. This implies that  $f(\ell(\gamma)) \leq 0$ for every $\gamma \in \calC(N)$ and we also have $\what{f}\left(\textstyle\sqrt{\lambda_j(N)-\frac14}\right)) \geq 0$ for every $j\geq 0$ by the hypotheses on $f$ and $\what{f}$. From the Selberg trace formula, we obtain
\begin{align*}
\what{f}(i/2) &\leq \sum_{j=0}^\infty \what{f}\left(\textstyle\sqrt{\lambda_j(N)-\frac14}\right)\\
 & = 2(g-1)\int_0^\infty \what{f}(x) x \tanh(\pi x) \,dx + \frac{1}{\sqrt{2\pi}}\sum_{\gamma \in \calC(N)} \frac{\Lambda(\gamma) f(\ell(\gamma))}{2 \sinh(\ell(\gamma)/2)} \\
&\leq 2(g-1)\int_0^\infty \what{f}(x) x \tanh(\pi x) \,dx \\
&\leq \what{f}(i/2)
\end{align*}
for every $N \in U$. We conclude that $\what{f}\left(\textstyle\sqrt{\lambda_j(N)-\frac14}\right)=0$ for every $j\geq 1$. Since $\what{f}$ is holomorphic in a strip and not constant equal to zero, its zeros are isolated. This implies that for every $j\geq 1$, the eigenvalue $\lambda_j(N)$ is a constant function of $N \in U$ since eigenvalues depend continuously on the metric (see e.g. \cite{Bando}). Therefore, all the surfaces in $U$ are isospectral. However, Gel'fand proved that any continuous deformation of $M$ that preserves the entire Laplace spectrum is constant \cite{Gelfand}, which is a contradiction.
\end{proof}

\begin{rem}
The analogous result for flat tori was proved in \cite[Theorem 3.2]{CohnElkies} using a rescaling and limiting argument for the second half of the proof. 
\end{rem}

\begin{rem}
It is easy to see that if the inequality in the third bullet point is strict, then the conclusion can be strengthened to a strict inequality. The proof proceeds similarly as above, but the chain of inequalities directly leads to a contradiction.
\end{rem}

\subsection{Low genus}

The upper bounds we have obtained from \thmref{thm:systole} though numerical optimization are listed in Table \ref{table:sys}  for $2\leq g \leq 20$. The verification of these values is done in the ancillary file \texttt{verify\_systole.ipynb}. They are lower than the previous best upper bounds except in genus $2$ where the optimal bound is $2\arccosh(1+\sqrt{2})\approx 3.057142$ \cite{Jenni}. In all other genera, the previous best upper bound was Bavard's inequality \cite{Bavard}
\begin{equation} \label{eq:Bavard}
\sys(M) \leq 2\arccosh\left( \frac{1}{2\sin(\pi / (12g-6))} \right),
\end{equation}
which comes from a sharp upper bound on the radius of an embedded disk in $M$.

We have also listed the largest recorded value of the systole in some genera. Those listed in genus $7$, $14$, and $17$ come from Hurwitz surfaces (these are $(2,3,7)$-triangle surfaces as defined in \secref{subsec:multiplicity_examples}). Technically, the values from \cite{VogelerThesis} and \cite{ScheinShoan} were obtained by numerical calculations in triangle groups and are not completely rigorous, but they could be made rigorous in principle (this was done in \cite{DT} for the Klein quartic and in \cite{Woods} for the Hurwitz surfaces of genus $14$). For Hurwitz surfaces, the calculations from \cite{ScheinShoan} corroborate those of \cite{VogelerThesis}.

Since the systole does not decrease under covers, one could fill in all the blanks in the table with values in lower genera. Similarly, the value listed in genus $13$ persists in every genus $g >13$ \cite{FBRafi}. We decided not to list these since better constructions surely exist.

{\footnotesize
\begin{table}[htp] 
\centering
\caption{Bounds on the maximum of $\sys$. The dagger indicates where the linear programming bound fails to beat the previous best upper bound.} \label{table:sys}
\begin{tabular}{|l|l|l|l|} 
\hline
genus & lower bound  & LP bound & previous upper bound \\
\hline
2 & 3.057141 \cite{Jenni} & $3.156053^\dagger$ &  3.057142 \cite{Jenni} \\   
3 & 3.983304 \cite{SchmutzMaxima} & 4.194719 & 4.494373  \cite{Bavard} \\
4 & 4.624499 \cite{SchmutzMaxima} & 4.876863 & 5.176481 \cite{Bavard} \\
5 & 4.91456 \cite{SchmutzKissing} & 5.381937 & 5.682841 \cite{Bavard}\\
6 &  5.109 \cite{Casamayou}        & 5.783671 & 6.086062 \cite{Bavard}\\
7 & 5.796298 \cite{VogelerThesis} & 6.117160 & 6.421249  \cite{Bavard} \\
8 &          & 6.407734 & 6.708126 \cite{Bavard} \\
9 &   5.376 \cite{SchmutzMaxima}      & 6.655635 & 6.958903 \cite{Bavard} \\
10 &         & 6.880869 & 7.181671 \cite{Bavard}  \\
11 & 5.980406 \cite{SchmutzMaxima} & 7.080715  & 7.382068 \cite{Bavard} \\
12 &          & 7.262735 & 7.564184 \cite{Bavard} \\
13 & 5.909039 \cite{FBRafi}  & 7.429527 & 7.731080 \cite{Bavard} \\
14 & 6.887905 \cite{Woods,VogelerThesis} & 7.584859 &  7.885106 \cite{Bavard} \\
15 &          & 7.729299 & 8.028108 \cite{Bavard} \\
16 &          & 7.863529 & 8.161558 \cite{Bavard} \\
17 & 7.609407 \cite{VogelerThesis} & 7.988773 & 8.286655 \cite{Bavard}\\
18 &   & 8.118854 & 8.404383 \cite{Bavard}\\
19 & 7.358 \cite{ScheinShoan} & 8.220710 & 8.515562 \cite{Bavard}\\
20 &  & 8.328393 & 8.620882 \cite{Bavard}\\
\hline
\end{tabular}
\end{table}
}

\subsection{Asymptotics}  \label{subsec:systole_asymptotic}

Note that the term $\sin(\pi / (12g-6))$ appearing in Bavard's bound is asymptotic to $\frac{\pi}{12 g}$ as $g \to \infty$ and \[\arccosh(x)=\log(x+\sqrt{x^2 - 1}) = \log(x)+\log(2) + o(1)\] as $x \to \infty$ so that Bavard's bound \eqref{eq:Bavard} can be rewritten as
\begin{align*}
\sys(M) &\leq 2 \log\left( \frac{6g}{\pi}\right)+2\log(2)+o(1) \\
&= 2\log(g) + 2\log\left( \frac{12}{\pi} \right) + o(1) \\
&= 2\log(g) + 2.680353\ldots + o(1)
\end{align*}
as $g \to \infty$. By comparison, the elementary area bound coming from the fact that a disk of radius $\sys(M)/2$ is embedded is
\begin{align*}
\sys(M) &\leq 4 \arcsinh(\sqrt{g-1}) \\
&= 2\log(g) + 4 \log(2) + o(1) \\
&= 2\log(g) + 2.772588\ldots + o(1)
\end{align*}
as $g \to \infty$.

We will decrease the additive constant in Bavard's bound by roughly $0.271$, which is consistent with the improvement we have observed in small genus. 

\begin{thm} \label{thm:systole_asymp}
There exists some $g_0 \geq 2$ such that every closed hyperbolic surface $M$ of genus $g\geq g_0$ satisfies
\[
\sys(M) < 2\log(g) + 2.409.
\]
\end{thm}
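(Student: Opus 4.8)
The plan is to construct, for each large $g$, an admissible test function $f$ satisfying the three hypotheses of \thmref{thm:systole} with $R = 2\log(g) + 2.409$, so that the conclusion $\sys(M) \leq R$ follows. Since \thmref{thm:systole} turns the problem into finding a single well-chosen function, the whole proof is a construction-plus-verification. Following the discussion in \secref{sec:Bessel}, the natural candidate is built from the squared Bessel functions $\varphi_\alpha$ and $\psi_\alpha$, which have compactly supported (hence positive) Fourier transforms and known sign behavior on the real line. Concretely, I would take $f$ to be (a rescaling of) a function of the form $\what{\psi_\alpha}$ or a product/convolution involving $\varphi_\alpha$, rescaled by a factor $a = a(g)$ so that the sign change of $f$ occurs exactly at $x = R$. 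The key numerical input is the location $j_\alpha$ of the first zero of $J_\alpha$: after rescaling, $f(x) \leq 0$ for $x \geq R$ translates into a relation between $a$, $R$, and $j_\alpha$, and one optimizes the order $\alpha$ to make the resulting bound as small as possible. The constant $2.409$ should emerge from this optimization in the limit $g \to \infty$, with $\alpha$ tending to a specific finite value (or to $\infty$ at a controlled rate).

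The second and third hypotheses must then be checked asymptotically. The condition $\what{f}(\xi) \geq 0$ on $\RR \cup i[-\tfrac12,\tfrac12]$ holds for free on $\RR$ because $\what{f}$ is (up to positive constants) $\chi_\alpha \ast \chi_\alpha$ or $\chi_\alpha$, which is nonnegative; on the imaginary segment it follows from the integral representation of $\eta_\alpha$ (or of $\psi_\alpha$) since $\cosh$ is positive — this is where Poisson's formula for Bessel functions does the work. The substantive inequality is the third one,
\[
\what{f}(i/2) \geq 2(g-1)\int_0^\infty \what{f}(x)\, x \tanh(\pi x)\, dx,
\]
and here one uses the asymptotic \eqnref{eq:Bessel_assymptotic_imag}, $J_\alpha(ix) \sim e^x/\sqrt{2\pi x}$, to evaluate the left-hand side: after rescaling by $a$, $\what{f}(i/2)$ grows like $a^{-\text{const}} e^{R/2}$ times a power of $R$, i.e. like a power of $g$. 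The right-hand side is $2(g-1)$ times a fixed convergent integral (the support of $\what{f}$ is $[-a, a]$ with $a \to \infty$, but one can bound $\tanh(\pi x) \leq 1$ and compute or estimate $\int_0^a \what{f}(x) x\, dx$ explicitly since $\what{f}$ is an explicit Bessel-type density). Comparing the two sides gives an inequality of the shape $C_1 g^{\theta} \geq C_2 g$ for the appropriate exponent $\theta$ determined by $\alpha$; the choice $R = 2\log g + 2.409$ together with the optimal $\alpha$ must make $\theta \geq 1$ with enough room in the constant, which holds for all $g \geq g_0$.

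The main obstacle I expect is twofold. First, getting the constant sharp: one needs the rescaling relation, the $e^{R/2}$ growth of $\what{f}(i/2)$, and the size of the normalizing integral to balance precisely so that the additive constant comes out below $2.409$ rather than, say, Bavard's $2.680$; this requires a careful, essentially one-variable optimization over $\alpha$ (and possibly over a second parameter if $f$ is a product of two Bessel factors) and an honest estimate of $\int_0^\infty \what{f}(x) x\tanh(\pi x)\,dx$ rather than a crude bound. Second, the error terms: the asymptotics \eqnref{eq:Bessel_assymptotic_imag} and \eqnref{eq:Bessel_assymptotic_real} are only leading-order, so one must control the $O(\cdot)$ corrections uniformly enough to conclude for all sufficiently large $g$ — this is routine but fiddly, and it is the reason the statement only claims existence of $g_0$ rather than an explicit threshold. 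The sign condition $f(x) \leq 0$ for $x \geq R$ also needs care if $f$ involves $\psi_\alpha$, since $\psi_\alpha(x) \leq 0$ only for $|x| \geq j_\alpha$ and one must ensure no spurious positive bump survives after rescaling; choosing $f$ to be exactly $\psi_\alpha$ rescaled (so that its only sign change on $[0,\infty)$ is at $j_\alpha$) sidesteps this, at the cost of making the optimization over $\alpha$ the crux of the argument.
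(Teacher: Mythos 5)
Your high-level architecture — rescale a Bessel-type test function, use the exponential growth of $J_\alpha$ along the imaginary axis to read off $\what f(i/2)\sim e^{R/2}$, and choose $R\sim 2\log g + c$ — matches the paper's strategy. But the specific test function and the sign-change analysis you propose would not work, for three reasons.

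First, you propose $\psi_\alpha$ as the core building block, but $\psi_\alpha$ has the wrong sign structure for a systole bound. The hypotheses of \thmref{thm:systole} require $f(x)\le 0$ for $x\ge R$ and $\what f\ge 0$ on $\RR\cup i[-\tfrac12,\tfrac12]$. If you set $\what f=\psi_\alpha(R\cdot)$, then $\what f$ changes sign on $\RR$ (at $j_\alpha/R$), violating the second hypothesis; if you set $f=\psi_\alpha(\cdot/R)$, then $f$ has only polynomial decay, so $\what f$ is compactly supported rather than holomorphic on a strip and $f$ fails admissibility. The Bessel function that has the right sign on the Fourier side is $\varphi_\alpha$, which is non-negative with non-negative compactly supported Fourier transform — and the paper explains that $\psi_\alpha$ is instead the right tool for the dual problem of bounding $\lambda_1$, where it is $\what f$ that must be eventually non-positive.

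Second, and more importantly, the paper explicitly notes that the pure Bessel ansatz $\what f(x)=\varphi_\alpha(Rx)$ yields a bound of the form $2\log g+c$ with $c$ \emph{larger} than Bavard's constant $2.680$. To get below it, the paper multiplies by a polynomial-Gaussian factor, taking $\what f(x)=h_c(bx)\,\varphi_\alpha(Rx)$ with $h_c(x)=(c-1+x^2)e^{-x^2/2}$. The extra factor changes the shape of $\what f$ enough to beat Bavard, and the constant $2.409$ comes from optimizing the three parameters $\alpha,b,c$ in this family. Your proposal has no such extra factor, and you would discover during the optimization that the unadorned Bessel family bottoms out above $2.680$.

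Third, once $\what f$ is such a product, $f$ is a convolution $\what{h_c^b}\ast\what{\varphi_\alpha^R}$ and its last sign change is \emph{not} at the support boundary $R$ (nor at a rescaled Bessel zero $j_\alpha$). The paper shows it happens at $R+b\kappa_0$ for a constant $\kappa_0\approx 0.18$ determined by the first $\kappa$ for which $\int_{\kappa}^\infty (x-\kappa)^{2\alpha}(c-x^2)e^{-x^2/2}\,dx<0$. Establishing this requires the technical Lemmas \ref{lem:pointwise_criterion}--\ref{lem:negative} and Corollary \ref{cor:non-positive}, which analyze the degenerate behaviour of $\what{\varphi_\alpha}(1-y/R)$ as $R\to\infty$ (it vanishes like $R^{-2\alpha}y^{2\alpha}$) and combine this with a dominated-convergence argument and a compactness argument to get uniformity in the sign-change location. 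Your expectation that the sign change "occurs exactly at $x=R$" and can be read off from $j_\alpha$ omits this entire layer of analysis; it is in fact the technically hardest part of the proof and cannot be skipped.

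So the gap is not cosmetic: without the $h_c$ factor the bound does not improve on Bavard, and with it the sign-change analysis is genuinely subtle, not a routine rescaling of a Bessel zero.
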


\begin{rem}
In terms of area, this result means that in large genus, a disk of radius $\sys(M)/2$ cannot occupy a proportion of more than
\[
\frac{e^{2.409/2}}{4} \approx 0.833772\ldots
\]
of $M$, while a maximal embedded disk can occupy as much as 
\[
\frac{3}{\pi} \approx 0.954929\ldots
\]
of the surface by Bavard's result \cite{Bavard}.
\end{rem}

\begin{rem}
In large genus, the constructions with the fastest growing systole known are given by towers of principal congruence covers of arithmetic surfaces. For each such tower, there is a constant $c$ such that
\[
\sys(M) \geq \frac43 \log(g) - c 
\]
for every surface $M$ of genus $g$ in the tower \cite[Theorem 1.5]{KSV}.
\end{rem}

The lengthy proof of \thmref{thm:systole_asymp} will require several estimates presented in the form of lemmata below. The strategy is to apply \thmref{thm:systole} with functions $f$ such that
\[
\what{f}(x) = h_{c}(bx) \varphi_\alpha(Rx),
\]
for some parameters $\alpha$, $b$, $c$ and $R$, where 
\[
h_{c}(x)= (c-1+x^2) e^{-x^2/2} \quad \text{and} \quad \varphi_\alpha(x) = \frac{J_\alpha(x/2)^2}{x^{2\alpha}}
\]
 with $J_\alpha$ the Bessel function of order $\alpha$ as in \secref{sec:Bessel}. We were led to these types of functions by studying the numerical data gathered in small genus. We believe they are nearly optimal. Using functions of the form $\what{f}(x) = \varphi_\alpha(Rx)$ instead yields a bound of the form $2\log(g) + c$ but with a $c$ larger that Bavard's, which is why we need to use more complicated functions.
 
The parameters $\alpha, b, c$ will be fixed at some point and only $R$ will depend on the genus. Since we need 
\[
\what{f}(i/2) > 2(g-1) \int_0^\infty \what{f}(x) x \tanh(\pi x) \, dx \geq 0,
\]
 we require that
\[
c-1-b^2/4 > 0  \quad \text{or equivalently} \quad c > 1 + b^2/4,
\] 
which in turn implies that $\what{f}$ is non-negative on $\RR \cup i [-\frac12,\frac12]$.

To apply \thmref{thm:systole}, we need to show that $f$ is eventually negative and to estimate the location of its last sign change. By Fourier inversion and the convolution theorem, we have
\begin{equation} \label{eqn:f_sys}
f = \frac{1}{\sqrt{2\pi}} \, \what{h_{c}^b}\ast \what{\varphi_\alpha^{R}}
\end{equation}
where $h_{c}^b(x)=h_c(bx)$ and $\varphi_\alpha^R(x) = \varphi_\alpha(Rx)$. Recall that $\varphi_\alpha$ is positive-definite with Fourier transform supported in $[-1,1]$ provided that $\alpha>0$, as explained in \secref{sec:Bessel}. It follows that $\what{\varphi_\alpha^{R}}(x) = \what{\varphi_\alpha}(x/R)/R$ is non-negative and supported in $[-R,R]$ while
\[
\what{h_{c}^b}(x) =\frac1b \what{h_{c}}(x/b) = \frac{1}{b}(c-x^2/b^2) e^{-x^2/(2b^2)}
\]
is non-positive outside $[-b\sqrt{c},b\sqrt{c}]$. From this, it is easy to show that the convolution $f$ is non-positive outside $[-R-b\sqrt{c},R+b\sqrt{c}]$. However, $f(R+b\sqrt{c})<0$ so that the last sign change occurs before that. Here is how we can estimate its location more precisely.

\begin{lem} \label{lem:pointwise_criterion}
Let $f_R$ be defined as in \eqref{eqn:f_sys} with $\alpha \in (0,1)$, $b>0$, and $c > 0$. If $\kappa_n\to \kappa_0 \in \RR$ and $R_n\to \infty$ as $n\to \infty$, then $R_n^{2\alpha+1} f_{R_n}(R_n+b\kappa_n)$ converges to
\[
\mu_{\alpha,b,c,\kappa_0}:= \frac{b^{2\alpha}}{\pi \, \Gamma(2\alpha+1)} \int_{\kappa_0}^\infty (x - \kappa_0)^{2\alpha} (c-x^2) e^{-x^2/2} \, dx
\]
as $n\to \infty$ and this limit depends continuously on the parameters. In particular, if $\alpha$, $b$ $c$ and $\kappa_0$ are such that $\mu_{\alpha,b,c,\kappa_0}<0$, then $f_{R_n}(R_n+b\kappa_n)$ is negative whenever $n$ is large enough.
\end{lem}

The proof will require the following intermediate lemma.

\begin{lem}  \label{lem:behaviour_near_end}
For every $\alpha \in (0,1)$, there exist a constant $d_\alpha > 0$ such that
\[
R^{2\alpha} \what{\varphi_\alpha}\left(1-\frac{y}{R}\right) \leq d_\alpha y^{2\alpha}
\]
for every $R>0$ and $y> 0$. Moreover,
\[
\lim_{R\to \infty} R^{2\alpha} \what{\varphi_\alpha}\left(1-\frac{y}{R}\right) = c_\alpha y^{2\alpha}
\]
 for every $y > 0$, where $c_\alpha=\sqrt{\frac{2}{\pi}} \frac{1}{\Gamma(2\alpha+1)}$.
\end{lem}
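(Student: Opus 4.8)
The plan is to work directly from the explicit formula for $\what{\varphi_\alpha}$ given in \secref{sec:Bessel}. Recall that $\varphi_\alpha(2x)$ is, up to a positive constant, equal to $\what{\chi_\alpha \ast \chi_\alpha}(x)$, so that $\what{\varphi_\alpha}$ is a positive constant multiple of $\chi_\alpha \ast \chi_\alpha$ (suitably rescaled), where $\chi_\alpha(t) = \frac{\sqrt{\pi/2}}{B(\frac12,\alpha+\frac12)} \rect(t/2)(1-t^2)^{\alpha-1/2}$ is supported on $[-1,1]$. Therefore $\what{\varphi_\alpha}$ is supported in $[-1,1]$, and for $s \in [0,1)$ we can write
\[
\what{\varphi_\alpha}(s) = A_\alpha \int_{s-1}^{1} (1-t^2)^{\alpha-1/2}\bigl(1-(t-s)^2\bigr)^{\alpha-1/2}\,dt
\]
for an explicit constant $A_\alpha > 0$ coming from the normalization (the rescaling by $2$'s in the relation between $\varphi_\alpha$ and $\chi_\alpha\ast\chi_\alpha$ must be tracked carefully, but it only contributes a further positive constant). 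The key point is the behavior as $s \to 1^-$: the integration window $[s-1,1]$ has length $2-s \to 1$, but near the upper endpoint $t=1$ the first factor $(1-t^2)^{\alpha-1/2} = (1-t)^{\alpha-1/2}(1+t)^{\alpha-1/2}$ is integrable (since $\alpha - 1/2 > -1$), while the second factor is bounded there; and near $t = s-1$ (which tends to $0$) both factors are bounded. So the only singular contribution as $s\to 1$ comes from a neighborhood of $t=1$.

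The substitution to make is $s = 1 - y/R$ and $t = 1 - u$, so that $1 - t^2 = u(2-u)$ and $1-(t-s)^2 = (1 - t + s)(1 + t - s) = (y/R - u)(2 - y/R + u)$, the latter requiring $u \le y/R$ for positivity, i.e. the relevant window is $u \in [0, y/R]$ up to an error that stays bounded. Substituting $u = (y/R)v$ with $v\in[0,1]$ gives
\[
R^{2\alpha}\what{\varphi_\alpha}\!\left(1-\tfrac{y}{R}\right) = A_\alpha\, R^{2\alpha}\left(\tfrac{y}{R}\right)^{2\alpha}\!\!\int_0^1 \bigl(v(2 - \tfrac{y}{R}v)\bigr)^{\alpha-1/2}\bigl((1-v)(2 - \tfrac{y}{R} + \tfrac{y}{R}v)\bigr)^{\alpha-1/2} dv + (\text{bdd. region error}).
\]
The prefactor is exactly $A_\alpha y^{2\alpha}$, and as $R \to \infty$ the integrand converges pointwise to $(2v)^{\alpha-1/2}(2(1-v))^{\alpha-1/2} = 4^{\alpha-1/2}(v(1-v))^{\alpha-1/2}$, which is integrable on $[0,1]$ (a Beta integral). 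Dominated convergence — using that for $R \ge y$ the factors $2 - \frac{y}{R}v$ and $2 - \frac{y}{R} + \frac{y}{R}v$ lie in $[1,2]$, giving a uniform bound $4^{\alpha-1/2}(v(1-v))^{\alpha-1/2}$ times a constant — yields the limit $c_\alpha y^{2\alpha}$ with $c_\alpha = A_\alpha \cdot 4^{\alpha-1/2} B(\alpha+\tfrac12, \alpha+\tfrac12)$, and the same uniform bound gives $R^{2\alpha}\what{\varphi_\alpha}(1-y/R) \le d_\alpha y^{2\alpha}$ for a suitable $d_\alpha$ (for $R \ge y$; for $R < y$ one uses instead that $\what{\varphi_\alpha}$ is bounded and vanishes once $1 - y/R < -1$, absorbing this into $d_\alpha$ after noting $y^{2\alpha} \ge (R/2)^{2\alpha}$ is not small there — one may need to enlarge $d_\alpha$ or argue separately, but since $\what{\varphi_\alpha}$ has a finite maximum $\|\what{\varphi_\alpha}\|_\infty$ and the inequality is only nontrivial when $1 - y/R \in (-1,1)$, i.e. $R/2 < y < \infty$... actually when $R < y/2$ we have $1-y/R < -1$ so $\what{\varphi_\alpha}$ vanishes; and when $R/2 \le y \le R$ we get $R^{2\alpha}\|\what{\varphi_\alpha}\|_\infty \le 2^{2\alpha}y^{2\alpha}\|\what{\varphi_\alpha}\|_\infty$). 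Finally, continuity of $c_\alpha$ in $\alpha$ follows from the explicit formula $c_\alpha = A_\alpha 4^{\alpha-1/2}B(\alpha+\tfrac12,\alpha+\tfrac12)$ once one checks $A_\alpha$ is continuous, which it is, being built from $\Gamma$ and $B$ values.

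The main obstacle I anticipate is \textbf{bookkeeping of the normalizing constants and the scaling factors of $2$} relating $\varphi_\alpha$, $\chi_\alpha \ast \chi_\alpha$, and $\what{\varphi_\alpha}$ — getting $A_\alpha$ exactly right so that $c_\alpha$ comes out as a clean expression and its continuity in $\alpha$ is transparent. The analytic heart (dominated convergence with the uniform bound $\lesssim (v(1-v))^{\alpha-1/2}$) is routine once the substitution is set up; the secondary subtlety is handling the uniform upper bound for \emph{all} $R > 0$ rather than just large $R$, which requires the small-$R$ case argument sketched above where $\what{\varphi_\alpha}$ is either bounded by its sup norm or vanishes identically.
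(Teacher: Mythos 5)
Your approach is genuinely different from the paper's, and once the scaling is tracked correctly it gives a cleaner proof. The paper exploits $\what{\varphi_\alpha}(1)=0$ to write $\what{\varphi_\alpha}(1-y/R)$ as a second difference, turns it into the oscillatory integral $R\sqrt{8/\pi}\int_0^\infty\varphi_\alpha(Ru)\cos(Ru)(\cos(yu)-1)\,du$, and then extracts the limit from the real-axis asymptotic expansion of $J_\alpha$ together with the Riemann--Lebesgue lemma. You instead use the explicit autoconvolution formula for $\what{\varphi_\alpha}$ and zoom in on the endpoint of the support; the limit becomes a Beta integral via dominated convergence. Your route avoids the Bessel asymptotics and Riemann--Lebesgue entirely, and also makes the constant $c_\alpha$ explicit, which is an advantage.

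That said, your explicit formula for $\what{\varphi_\alpha}$ is wrong, and crucially not merely by a normalizing constant: the dilation sits \emph{inside} the argument. Since $\varphi_\alpha(2x)$ is a positive constant times $\what{\chi_\alpha\ast\chi_\alpha}(x)$, the scaling rule and Fourier inversion give $\what{\varphi_\alpha}(s)\propto(\chi_\alpha\ast\chi_\alpha)(2s)$, not $(\chi_\alpha\ast\chi_\alpha)(s)$. The distinction matters: with your formula $\what{\varphi_\alpha}(1)=(\chi_\alpha\ast\chi_\alpha)(1)>0$, but $\what{\varphi_\alpha}$ is continuous and supported in $[-1,1]$, so $\what{\varphi_\alpha}(1)=0$. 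More to the point, with the correct formula the integration window is $[2s-1,1]$, of length $2-2s=2y/R\to 0$; it is precisely this shrinking that produces the $R^{-2\alpha}$ decay. The window $[s-1,1]$ from your formula has length $2-s\to 1$ and would give no decay. Your later steps tacitly switch to the correct picture (the window $u\in[0,y/R]$ does shrink), but the displayed algebra does not follow from the stated setup: with $s=1-y/R$ and $t=1-u$ one has $(1-t+s)(1+t-s)=(1+u-y/R)(1-u+y/R)=1-(y/R-u)^2$, which is not $(y/R-u)(2-y/R+u)$. Once you use $\what{\varphi_\alpha}(s)\propto(\chi_\alpha\ast\chi_\alpha)(2s)$ with $t=1-u$ on $u\in[0,2y/R]$, you correctly get $1-(2s-t)^2=(2y/R-u)(2-2y/R+u)$, and the substitution $u=(2y/R)v$ produces the prefactor $(2y/R)^{2\alpha}$ and the dominated Beta integral (with the dominating function depending on whether $\alpha-\tfrac12$ is positive or negative, but bounded by $\max(1,4^{\alpha-1/2})(v(1-v))^{\alpha-1/2}$ in either case). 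Your handling of the uniform bound for small $R$ via $\|\what{\varphi_\alpha}\|_\infty$ and the vanishing of $\what{\varphi_\alpha}$ outside $[-1,1]$, and your observation that $c_\alpha$ is continuous because it is an explicit product of Gamma/Beta values, are both fine.
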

 
\begin{proof} The following proof is due to an anonymous referee; it considerably shortens our original proof and provides a simpler formula for $c_\alpha$. The idea is to prove that the function $\Phi_\alpha(t) = \what{\varphi_\alpha}(t)/(1-t)^{2\alpha}$ defined for $t \in (-\infty, 1)$ extends continuously at $t=1$.

As was mentioned in Section \ref{sec:Bessel},
\[
\what{\varphi_\alpha} = \frac{1}{16^\alpha \Gamma(\alpha+1)^2} \calF[x \mapsto \eta(x/2)^2] = \frac{4}{\sqrt{2\pi}\cdot 16^\alpha \Gamma(\alpha+1)^2} \calF[\tau_\alpha\ast \tau_\alpha]
\]
where $\calF$ is the Fourier transform and $\tau_\alpha(x) = \chi_\alpha(2x)$. Filling this in and using that $\Gamma(\frac{1}{2})=\sqrt{\pi}$ yields
\[
\what{\varphi_\alpha}(t) = \frac{1}{2^{4\alpha-5/2}\pi^{1/2}\Gamma(\alpha+\frac{1}{2})^2} \int_{-\frac{1}{2}+t}^{\frac{1}{2}} (1-4(t-y)^2)^{\alpha-\frac{1}{2}} (1-4y^2)^{\alpha-\frac{1}{2}} dy
\]
for every $t \in [0,1]$. We then substitute $u = \frac{y-t+\frac{1}{2}}{1-t}$ and obtain, after elementary simplifications,
\[
\what{\varphi_\alpha}(t) =\sqrt{\frac{2}{\pi}} \frac{(1-t)^{2\alpha}}{\Gamma(\alpha+\frac{1}{2})^2} \int_0^1 [u(1-u)(1-(1-t)u)(t+(1-t)u)]^{\alpha-1/2} du.
\]
Since the integrand is bounded by a constant times the integrable function $[u(1-u)]^{\alpha-1/2}$ as $t$ approaches $1$, the dominated convergence theorem implies that
\begin{align*}
c_\alpha &=  \lim_{R\to \infty}\Phi_\alpha\left(1-\frac{y}{R}\right) = \lim_{t\to 0^+}\Phi_\alpha(1-t) = \lim_{t\to 1^-}\Phi_\alpha(t) \\
&= \sqrt{\frac{2}{\pi}} \frac{1}{\Gamma(\alpha+\frac{1}{2})^2} \int_0^1 [u(1-u)]^{\alpha-\frac{1}{2}}du= \sqrt{\frac{2}{\pi}} \frac{B(\alpha+\frac{1}{2},\alpha+\frac{1}{2})}{\Gamma(\alpha+\frac{1}{2})^2} = \sqrt{\frac{2}{\pi}} \frac{1}{\Gamma(2\alpha+1)}>0,
\end{align*}
when $y>0$. Since $\Phi_\alpha$ extends to a continuous function on $(\infty,1]$ that is supported on $[-1,1]$ (because $\what{\varphi_\alpha}$ is), it is bounded above by some constant $d_\alpha > 0$, which gives the first part of the lemma.
\end{proof}

We then use this compute the limit we wanted.

\begin{proof}[Proof of \lemref{lem:pointwise_criterion}]
We have
\begin{align*}
R^{2\alpha+1}f_R(R+b\kappa) &= \frac{1}{\sqrt{2\pi}}R^{2\alpha+1}\int_{-\infty}^\infty \what{h_{c}^b}(x) \what{\varphi_\alpha^{R}}(R+b\kappa - x) \,dx \\
&= \frac{1}{b\sqrt{2\pi}} R^{2\alpha}\int_{-\infty}^\infty \what{h_{c}}(x/b) \what{\varphi_\alpha}\left(1 - \frac{x-b\kappa}{R}\right) dx \\
&= \frac{1}{\sqrt{2\pi}} \int_{-\infty}^\infty  \what{h_{c}}(y) R^{2\alpha} \what{\varphi_\alpha}\left(1 - b\frac{y-\kappa}{R}\right) dy \\
& = \frac{1}{\sqrt{2\pi}} \int_{\kappa}^\infty  \what{h_{c}}(y) R^{2\alpha} \what{\varphi_\alpha}\left(1 - b\frac{y-\kappa}{R}\right) dy
\end{align*}
by the change of variable $y=x/b$, where the last equality is because $\what{\varphi_\alpha}$ vanishes after $1$.

If $y > \kappa$, then $R^{2\alpha} \what{\varphi_\alpha}\left(1 - b\frac{y-\kappa}{R}\right) \leq d_\alpha (b(y- \kappa))^{2\alpha}$
for some $d_\alpha>0$ according to \lemref{lem:behaviour_near_end}. If we write $\underline \kappa = \inf_{n} \kappa_n$, then we have that $\what{h_{c}}(y) R_n^{2\alpha} \what{\varphi_\alpha}\left(1 - b\frac{y-\kappa_n}{R_n}\right)$ is bounded independently of $n$ by the integrable function
\[
d_\alpha b^{2\alpha}(y- \underline \kappa)^{2\alpha} |\what{h_{c}}(y)|
\]
on the interval $[\underline \kappa,\infty)$. We can therefore apply the dominated convergence theorem to conclude that
\begin{align*}
\lim_{n \to \infty} R_n^{2\alpha+1}f_{R_n}(R_n+b\kappa_n) &= \frac{c_\alpha b^{2\alpha}}{\sqrt{2\pi}} \int_{\kappa_0}^\infty (y - \kappa_0)^{2\alpha}\what{h_c}(y)\,dy \\
&=\frac{b^{2\alpha}}{\pi \, \Gamma(2\alpha+1)} \int_{\kappa_0}^\infty (y - \kappa_0)^{2\alpha}(c-y^2) e^{-y^2/2}\,dy ,
\end{align*}
where we used the limit from \lemref{lem:behaviour_near_end}.  That this limit depends continuously on the parameters is a consequence of the dominated convergence theorem.  If the limit is negative, then $f_{R_n}(R_n+b\kappa_n)$ is eventually negative since $R_n^{2\alpha +1}$ is positive.
\end{proof}

We now need to find good parameters where the limit in \lemref{lem:pointwise_criterion} is negative.

\begin{lem} \label{lem:negative}
Let $\alpha = 0.559$ and $c=2.3726$. Then
\[
\int_\kappa^\infty (x-\kappa)^{2\alpha} (c-x^2) e^{-x^2/2} \,dx < 0.
\]
for every $\kappa \geq \kappa_0 = 0.1814$.
\end{lem}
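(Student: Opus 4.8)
The plan is to reduce the claim to a single sharp numerical inequality about convergent Gaussian-type integrals, after disposing of all large $\kappa$ by an elementary sign argument.

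First I would substitute $t=x-\kappa$ and expand $c-(t+\kappa)^2=(c-\kappa^2)-2\kappa t-t^2$, turning the integral into
\[
e^{-\kappa^2/2}\bigl[(c-\kappa^2)\,m_0(\kappa)-2\kappa\,m_1(\kappa)-m_2(\kappa)\bigr],\qquad m_j(\kappa):=\int_0^\infty t^{2\alpha+j}e^{-t^2/2-\kappa t}\,dt,
\]
all integrals converging since $2\alpha=1.118>0$. Integrating $m_2$ by parts once, using $\tfrac{d}{dt}e^{-t^2/2-\kappa t}=-(t+\kappa)e^{-t^2/2-\kappa t}$ and noting that the boundary terms vanish because $2\alpha+1>0$, yields the recurrence $m_2(\kappa)=(2\alpha+1)m_0(\kappa)-\kappa\,m_1(\kappa)$. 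Hence the integral equals $e^{-\kappa^2/2}\Psi(\kappa)$ with
\[
\Psi(\kappa):=\bigl(c-2\alpha-1-\kappa^2\bigr)m_0(\kappa)-\kappa\,m_1(\kappa),
\]
and since $e^{-\kappa^2/2}>0$ it remains to show $\Psi(\kappa)<0$ for $\kappa\geq\kappa_0$.

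For the chosen constants $c-2\alpha-1=0.2546>0$; put $\kappa_1:=\sqrt{c-2\alpha-1}$, so $\kappa_1\approx 0.5046$. If $\kappa\geq\kappa_1$ then $c-2\alpha-1-\kappa^2\leq 0$ while $m_0(\kappa),m_1(\kappa)>0$ and $\kappa>0$, so $\Psi(\kappa)\leq-\kappa\,m_1(\kappa)<0$ immediately. On the compact interval $[\kappa_0,\kappa_1]$ I would show $\Psi$ is strictly decreasing: differentiating under the integral sign (legitimate by Gaussian decay) and using $m_0'=-m_1$, $m_1'=-m_2$ together with the recurrence gives
\[
\Psi'(\kappa)=(2\alpha-1)\kappa\,m_0(\kappa)+(2\alpha-c)\,m_1(\kappa)=0.118\,\kappa\,m_0(\kappa)-1.2546\,m_1(\kappa),
\]
which is negative because $m_1(\kappa)/m_0(\kappa)>0.094\,\kappa$ with a vast margin for $\kappa\leq\kappa_1$ (the right side is below $0.05$, while restricting the defining integral of $m_1(\kappa)$ to a fixed subinterval of $(0,\infty)$ already bounds $m_1(\kappa)/m_0(\kappa)$ below by an absolute positive constant). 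Thus $\Psi(\kappa)\leq\Psi(\kappa_0)$ for $\kappa\in[\kappa_0,\kappa_1]$, and everything reduces to $\Psi(\kappa_0)<0$, i.e.
\[
\frac{m_1(\kappa_0)}{m_0(\kappa_0)}\;>\;\frac{c-2\alpha-1-\kappa_0^2}{\kappa_0}=1.22212\ldots.
\]

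This last inequality is the crux, and I expect it to be the only real obstacle: it is genuinely sharp. A Taylor expansion in $\kappa$ about $\kappa=0$, where the ratio equals $\sqrt{2}\,\Gamma(\alpha+1)/\Gamma(\alpha+\tfrac12)\approx1.298$, shows that at $\kappa_0=0.1814$ the left side exceeds the threshold by only about $2\times10^{-4}$ (and the inequality fails slightly below $\kappa_0$), which is why $\kappa_0=0.1814$ rather than a rounder value is forced. So this step must be done with certified arithmetic, not estimates: $m_0(\kappa_0)$ and $m_1(\kappa_0)$ are integrals of entire functions with Gaussian decay, so one evaluates $\int_0^b$ with the \texttt{Arb} package and bounds the tail explicitly via $e^{-\kappa_0 t}\leq e^{-\kappa_0 b}$ and $t^{2\alpha+j}\leq t^{3}$ for $t\geq1$, together with the elementary primitive $\int t^{3}e^{-t^2/2}\,dt=-(t^2+2)e^{-t^2/2}$, exactly as in the subsection on certifying error bounds on integrals. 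Everything else is a sign check with a large margin.
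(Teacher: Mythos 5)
Your reduction is correct, and it takes a genuinely different route from the paper. The paper works directly with $I(\kappa)=\int_\kappa^\infty(x-\kappa)^{2\alpha}(c-x^2)e^{-x^2/2}\,dx$ and performs three separate certified interval-arithmetic computations on integrands with the singular factor $(x-\kappa)^{2\alpha}$: it verifies $I(\kappa_0)<0$ (with margin of order $10^{-5}$), shows $I'(\kappa)<0$ on $[\kappa_0,0.7]$ by bounding another such integral, and bounds $I(\kappa)\leq-0.09$ directly on $[0.7,\sqrt{c}\,]$, the range $\kappa\geq\sqrt{c}$ being trivial. Your integration by parts (I checked the recurrence $m_2=(2\alpha+1)m_0-\kappa m_1$ and the resulting formulas $I=e^{-\kappa^2/2}\Psi$, $\Psi'=(2\alpha-1)\kappa m_0+(2\alpha-c)m_1$) collapses this: the whole range $\kappa\geq\kappa_1=\sqrt{c-2\alpha-1}\approx 0.505$ is disposed of by pure sign considerations, the monotonicity of $\Psi$ on $[\kappa_0,\kappa_1]$ follows from a ratio bound $m_1/m_0>0.095\,\kappa$ that holds with an enormous margin, and only one delicate certified inequality remains, namely $\Psi(\kappa_0)<0$ --- which is unavoidable by either method, since the statement is numerically sharp at $\kappa_0$ (consistent with the paper's certified value $I(\kappa_0)\leq-1.18\times10^{-5}$). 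What your route buys is a lighter verification (one sharp computation instead of three, and no singular integrand in the monotonicity step); what it costs is the algebraic preprocessing, which the paper's more pedestrian argument avoids.

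Two small corrections, neither of which affects the viability of the approach. First, the integrands $t^{2\alpha+j}e^{-t^2/2-\kappa_0 t}$ are not entire: $t^{2\alpha}$ has a branch point at $t=0$ for non-integer $2\alpha$, so the rigorous \texttt{Arb} integration must be run on $[A,b]$ for some small $A>0$, with the piece over $[0,A]$ bounded by hand in the needed directions (e.g.\ $\int_0^A t^{2\alpha}e^{-t^2/2-\kappa_0 t}\,dt\leq A^{2\alpha+1}/(2\alpha+1)$ as an upper-bound contribution to $m_0(\kappa_0)$, and the corresponding nonnegative piece simply discarded for the lower bound on $m_1(\kappa_0)$); this is exactly the endpoint-splitting the paper performs near its singularity, so it is a routine fix, but your phrase ``entire functions'' should not be taken literally. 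Second, in the monotonicity step the lower bound on $m_1(\kappa)/m_0(\kappa)$ needs, besides restricting the $m_1$-integral to a fixed subinterval, the (trivial) upper bound $m_0(\kappa)\leq m_0(0)$ coming from $e^{-\kappa t}\leq 1$; with, say, $m_1(\kappa)\geq e^{-2\kappa_1}\int_1^2 t^{2\alpha+1}e^{-t^2/2}\,dt$ this gives a ratio well above the required $0.095\,\kappa_1<0.05$, so the margin you claim is real, but the argument should be stated with both halves. Also note that the direction of the final certified check is: upper bound on $m_0(\kappa_0)$, lower bound on $m_1(\kappa_0)$ --- which is precisely what your proposed tail estimates provide.
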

\begin{proof}
If $\kappa \geq \sqrt{c}$, then the result is obvious, because the integrand is non-positive. So we concentrate on the interval $[\kappa_0,\sqrt{c}]$.

Let us denote by the integral in the statement of the lemma by $I(\kappa)$. We first verify that $I(\kappa_0)<0$ using interval arithmetic in \texttt{SageMath}. Since the integrand has a singularity at $\kappa_0$, we need to estimate the integral differently near there. We write
\[
\int_{\kappa_0}^{\kappa_0+A} (x-\kappa_0)^{2\alpha} (c-x^2) e^{-x^2/2} \,dx \leq A^{2\alpha}\int_{\kappa_0}^{\kappa_0+A}(c-x^2) e^{-x^2/2} \,dx
\]
and then
\[
\int_{\kappa_0+A}^\infty (x-\kappa_0)^{2\alpha} (c-x^2) e^{-x^2/2} \,dx \leq \int_{\kappa_0+A}^B (x-\kappa_0)^{2\alpha} (c-x^2) e^{-x^2/2} \,dx
\]
as long as $A>0$, $B\geq\kappa_0+A$, and $B \geq \sqrt{c}$. With $A= 10^{-4}$ and $B=10$, these estimates provide the certified upper bound
\[
I(\kappa_0) \leq -0.0000117812526025449 < 0.
\]

We then show that $I'(\kappa) < 0$ on $[\kappa_0,0.7]$ as follows. By the change of variable $y = x - \kappa$ we get
\[
I(\kappa) = \int_0^\infty y^{2\alpha} (c-(y+\kappa)^2) e^{-(y+\kappa)^2/2} \,dy
\]
and then differentiation under the integral sign (which is justified because the derivative of the integrand is uniformly bounded by a polynomial times a Gaussian for $\kappa$ in a bounded interval) gives
\begin{align*}
I'(\kappa) &= \int_0^\infty y^{2\alpha} (y+\kappa)((y+\kappa)^2-(c+2)) e^{-(y+\kappa)^2/2} dy \\
&= \int_\kappa^\infty (x-\kappa)^{2\alpha} x (x^2 - (c+2)) e^{-x^2/2} dx\\
&= \int_\kappa^{\sqrt{c+2}} (x-\kappa)^{2\alpha} x (x^2 - (c+2)) e^{-x^2/2} dx+\int_{\sqrt{c+2}}^\infty (x-\kappa)^{2\alpha} x (x^2 - (c+2)) e^{-x^2/2}dx\\
& \leq  \int_{0.7}^{\sqrt{c+2}} (x-0.7)^{2\alpha} x (x^2 - (c+2)) e^{-x^2/2} dx +\int_{\sqrt{c+2}}^\infty (x-\kappa_0)^{2\alpha} x (x^2 - (c+2)) e^{-x^2/2}dx
\end{align*}
whenever $\kappa_0 \leq \kappa \leq 0.7$. We verify that this sum of integrals is at most \[-0.0464961225743898\] (hence negative) using interval arithmetic (again splitting the integrals near $0.7$ and $\infty$). It follows that $I$ is bounded above by $I(\kappa_0)<0$ on $[\kappa_0,0.7]$.

For any $\kappa \in [0.7,\sqrt{c}]$, we estimate
\begin{align*}
I(\kappa) &= \int_\kappa^{\sqrt{c}} (x-\kappa)^{2\alpha}(c-x^2) e^{-x^2/2} \,dx + \int_{\sqrt{c}}^\infty (x-\kappa)^{2\alpha} (c-x^2) e^{-x^2/2} \,dx \\
& \leq  \int_{0.7}^{\sqrt{c}} (x-0.7)^{2\alpha} (c-x^2) e^{-x^2/2} \,dx+\int_{\sqrt{c}}^\infty (x-\sqrt{c})^{2\alpha} (c-x^2) e^{-x^2/2} \,dx \\
&\leq -0.0907427113682867
\end{align*}
using interval arithmetic once again, which completes the proof.
\end{proof}

From the above lemma, we can deduce that the function $f$ from equation \eqref{eqn:f_sys} is non-positive from $R+b\kappa_0$ onwards provided that $R$ is large enough.

\begin{cor} \label{cor:non-positive}
Let  $\alpha = 0.559$, $b>0$, $c=2.3726$, and $\kappa_0 = 0.1814$. Then there exists some $R_0 >0$ such that $f(R+b \kappa) \leq 0$ for every $R \geq R_0$ and every $\kappa \geq \kappa_0$, where $f$ is as in equation \eqref{eqn:f_sys}.
\end{cor}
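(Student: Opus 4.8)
The plan is to combine \lemref{lem:negative} and \lemref{lem:pointwise_criterion} through a short compactness argument, after first disposing of large values of $\kappa$ by hand. Recall from the discussion preceding \lemref{lem:pointwise_criterion} that $\what{\varphi_\alpha^R}$ is non-negative and supported in $[-R,R]$, while $\what{h_c^b}(x)=\frac1b(c-x^2/b^2)e^{-x^2/(2b^2)}$ is non-positive for $|x|\geq b\sqrt c$. Since $f=\frac{1}{\sqrt{2\pi}}\,\what{h_c^b}\ast\what{\varphi_\alpha^{R}}$, for $t\geq R+b\sqrt c$ the convolution integrand $\what{h_c^b}(t-s)\what{\varphi_\alpha^{R}}(s)$ is a product of a non-positive factor (because $t-s\geq t-R\geq b\sqrt c$ on the support of $\what{\varphi_\alpha^{R}}$) and a non-negative one, so $f(t)\leq 0$. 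In particular $f(R+b\kappa)\leq 0$ for every $\kappa\geq\sqrt c$ and every $R>0$, so only the compact range $\kappa\in[\kappa_0,\sqrt c]$ remains (note $\kappa_0=0.1814<\sqrt c$).

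For this range I would argue by contradiction. If no $R_0$ works, then for each $n\in\NN$ there exist $R_n\geq n$ and $\kappa_n\geq\kappa_0$ with $f(R_n+b\kappa_n)>0$; by the previous paragraph necessarily $\kappa_n<\sqrt c$, so $\kappa_n\in[\kappa_0,\sqrt c]$ and $R_n\to\infty$. Passing to a subsequence, $\kappa_n$ converges to some $\kappa_*\in[\kappa_0,\sqrt c]$. Since $\kappa_*\geq\kappa_0$, \lemref{lem:negative} shows that the hypothesis of \lemref{lem:pointwise_criterion} is satisfied with $\kappa_0$ there replaced by $\kappa_*$; the remaining hypotheses hold because $\alpha=0.559\in(0,1)$, $b>0$, and $c=2.3726>0$. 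Applying \lemref{lem:pointwise_criterion} to the sequences $\kappa_n\to\kappa_*$ and $R_n\to\infty$ gives $R_n^{2\alpha+1}f(R_n+b\kappa_n)\to\mu_{\alpha,b,c,\kappa_*}<0$. But $R_n^{2\alpha+1}>0$ and $f(R_n+b\kappa_n)>0$ for every $n$, so $R_n^{2\alpha+1}f(R_n+b\kappa_n)>0$, contradicting convergence to a negative limit. Hence some $R_0$ works on $[\kappa_0,\sqrt c]$, and combined with the tail estimate this is the $R_0$ claimed in the corollary.

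The argument is essentially bookkeeping: the analytic content is already contained in the two preceding lemmas. The only points that need care are (i) verifying that the tail estimate for $f$ is genuinely uniform in $R$ — which it is, precisely because the support of $\what{\varphi_\alpha^{R}}$ and the sign-change locus of $\what{h_c^b}$ translate in exactly the compatible way — and (ii) checking that the failure of the corollary forces $R_n\to\infty$ while $\kappa_n$ stays in the compact interval $[\kappa_0,\sqrt c]$, which is what legitimizes extracting a convergent subsequence and invoking \lemref{lem:pointwise_criterion}. Neither of these is a genuine obstacle, so I expect the proof to be quite short.
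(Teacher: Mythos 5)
Your proposal is correct and uses the same two ingredients as the paper (the explicit sign analysis for $\kappa\geq\sqrt c$, then \lemref{lem:negative} plus \lemref{lem:pointwise_criterion} on the compact range $[\kappa_0,\sqrt c]$). The only difference is in how the compactness is packaged: the paper constructs the auxiliary function $F(R,\kappa)$ on $(0,\infty]\times[\kappa_0,\sqrt c]$, checks continuity (including on $(0,\infty)\times[\kappa_0,\sqrt c]$ via dominated convergence), and covers $\{\infty\}\times[\kappa_0,\sqrt c]$; you instead argue by contradiction and extract a convergent subsequence, which plugs directly into the sequence formulation of \lemref{lem:pointwise_criterion} and avoids the extra continuity check. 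Both arguments are valid and interchangeable; yours is marginally more economical.
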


\begin{proof}
If $R>0$ and $\kappa \geq \sqrt{c}$, then
\[
f(R+b\kappa) = \frac{1}{\sqrt{2\pi}} \int_\kappa^\infty \what{h_c}(y)\what{\varphi_\alpha} \left( 1 - b \frac{y-\kappa}{R} \right) dy \leq 0
\]
because $\what{h_c}$ is non-positive after $\sqrt{c}$ and $\what{\varphi_\alpha}$ is non-negative, so their product is non-positive.

For every $\kappa  \in [\kappa_0, \sqrt{c}]$ we have that
$R^{2\alpha+1}f(R+b\kappa)$ converges to some $\mu_{\alpha,b,c,\kappa} < 0$ as $R \to \infty$ by \lemref{lem:pointwise_criterion} and \lemref{lem:negative}. Let 
\[
F(R,\kappa) := \begin{cases} R^{2\alpha+1}f(R+b\kappa) & \text{if }R \in (0,\infty) \\ \mu_{\alpha,b,c,\kappa} & \text{if }R=\infty.\end{cases}
\]
It follows form \lemref{lem:pointwise_criterion} that $F$ is continuous at every point in $\{\infty\} \times [\kappa_0, \sqrt{c}]$ and the continuity on $(0,\infty)\times [\kappa_0, \sqrt{c}]$ is a consequence of the dominated convergence theorem.

We deduce that every $\kappa \in [\kappa_0, \sqrt{c}]$, there exists some neighborhood $U$ of $\kappa$ and some $R_\kappa > 0$ such that $R^{2\alpha+1}f(R+b u) < 0$ for every $u \in U$ and every $R \geq R_\kappa$. By compactness, we can find an $R_0>0$ that works for the whole interval $[\kappa_0, \sqrt{c}]$.
\end{proof}

Armed with this result, we can finally give the proof of \thmref{thm:systole_asymp}.

\begin{proof}[Proof of \thmref{thm:systole_asymp}]
Let $\what{f}(x) = h_c(bx) \varphi_\alpha(Rx)$ as before with $\alpha = 0.559$, $b=1.0286$, $c=2.3726$, and $R>0$. Note that $c>1+b^2/4$ as required, which implies that $\what{f} \geq 0$ on $\RR \cup i \left[\frac12,\frac12\right]$. By \corref{cor:non-positive}, there exists some $R_0 > 0$ such that if $R\geq R_0$, then $f(x) \leq 0$ whenever $x \geq R + b\kappa_0$, where $\kappa_0 = 0.1814$ (recall that $f$ itself depends on $R$, which is why we need to consider the parameters $\kappa \geq \kappa_0$ to cover every $x \geq R + b\kappa_0$). Therefore, $R+b\kappa_0$ provides a bound on $\sys(M)$ as long as 
\[
\what{f}(i/2) \geq 2(g-1) \int_0^\infty \what{f}(x) x \tanh(\pi x) \,dx.
\]
We will thus estimate both sides and choose $R$ so that this inequality is satisfied.

For the left-hand side, we have
\[
\what{f}(i/2) = h_c(bi/2)\varphi_\alpha(iR/2)
\]
where
\[
\varphi_\alpha(iR/2) = \frac{J_\alpha(iR/4)^2}{(R/2)^{2\alpha}} \sim \frac{2^{2\alpha+1}}{\pi} \frac{e^{R/2}}{R^{2\alpha+1}}
\]
as $R \to \infty$ by the asymptotic \eqref{eq:Bessel_assymptotic_imag}.

As for the integral term, we consider
\[
 \int_0^\infty h_c(bx)  x \tanh(\pi x) R^{2\alpha+1} \varphi_\alpha(Rx)\,dx
\]
and recall from \secref{sec:Bessel} that for every $x>0$ we have
\[
\left| R^{2\alpha+1} \varphi_\alpha(Rx) - \frac{4}{\pi x^{2\alpha+1}} \cos^2 \left( \frac{Rx}{2} - \frac{(2\alpha+1)\pi}{4} \right) \right|  \to 0
\] 
as $R \to \infty$ and that $y \mapsto \sqrt{y} J_\alpha(y)$ is bounded on $(0,\infty)$. Since $(Rx)^{2\alpha+1} \varphi_\alpha(Rx) = Rx J_\alpha(Rx/2)^2$, there exists a constant $C>0$ such that $R^{2\alpha+1} \varphi_\alpha(Rx) \leq  C x^{-(2\alpha+1)}$ for every $x>0$ and every $R>0$. Observe that $h_c(bx)x\tanh(\pi x) x^{-(2\alpha+1)}$  is integrable on $(0,\infty)$ as long as $\alpha<1$. Since this is satisfied for our chosen parameter $\alpha = 0.559$, we can apply the dominated convergence theorem to obtain
\begin{align*}
& \lim_{R \to \infty}  R^{2\alpha+1}\int_0^\infty h_c(bx)  x \tanh(\pi x)  \varphi_\alpha(Rx)\,dx \\
& = \frac{4}{\pi} \lim_{R \to \infty} \int_0^\infty \frac{h_c(bx)\tanh(\pi x)}{x^{2\alpha}} \cos^2 \left( \frac{Rx}{2} - \frac{(2\alpha+1)\pi}{4} \right) dx.
\end{align*}
To compute the integral inside the limit, we write the squared cosine as
\[
\frac12\left( 1+ \cos(Rx)\cos\left(\frac{(2\alpha+1)\pi}{2}\right) +  \sin(Rx)\sin\left(\frac{(2\alpha+1)\pi}{2}\right)  \right).
\]
By the Riemann--Lebesgue lemma, the integrals of the terms with $\cos(Rx)$ or $\sin(Rx)$ tend to zero as $R \to \infty$, so that
\[
\frac{4}{\pi} \lim_{R \to \infty} \int_0^\infty \frac{h_c(bx)\tanh(\pi x)}{x^{2\alpha}} \cos^2 \left( \frac{Rx}{2} - \frac{(2\alpha+1)\pi}{4} \right) dx = \frac{2}{\pi} \int_0^\infty \frac{h_c(bx) \tanh(\pi x)}{x^{2\alpha}}dx.
\]
We thus have
\[
\frac{\what{f}(i/2)}{\int_0^\infty \what{f}(x) x \tanh(\pi x) \,dx} \sim \frac{ 4^{\alpha} h_c(bi/2)}{\int_0^\infty \frac{h_c(bx) \tanh(\pi x)}{x^{2\alpha}}dx} e^{R/2}
\]
as $R \to \infty$.

For any $\rho>1$ and $g\geq 2$, if we choose $R$ such that the right-hand side equal is equal to $2(g-1)\rho$, then the left-hand side will be larger than $2(g-1)$ provided that $R$ is large enough so that the asymptotic is sufficiently precise. We thus take
\[
R = 2 \log(g-1)+2\log\left( \rho \frac{2 ^{1-2\alpha}}{h_c(bi/2)} \int_0^\infty \frac{h_c(bx) \tanh(\pi x)}{x^{2\alpha}}dx  \right)   ,
\]
which tends to infinity as $g \to \infty$. The hypotheses of \thmref{thm:systole} will then  satisfied if $g$ is large enough, and we can further assume that $R \geq R_0$, so that the resulting bound on the systole is
\begin{align*}
\sys(M) &\leq R + b \kappa_0 \\
& = 2 \log(g-1) + 2\log(\rho) +b \kappa_0 + 2 \log \left( \frac{2 ^{1-2\alpha}}{h_c(bi/2)} \int_0^\infty \frac{h_c(bx) \tanh(\pi x)}{x^{2\alpha}}dx  \right)
\end{align*}
according to \corref{cor:non-positive}. Since $\rho>1$ was arbitrary, the additive constant can be taken as close as we wish to 
\[
b \kappa_0 + 2 \log \left( \frac{2 ^{1-2\alpha}}{h_c(bi/2)} \int_0^\infty \frac{h_c(bx) \tanh(\pi x)}{x^{2\alpha}}dx  \right).
\]

To estimate the integral rigorously, we restrict to a compact interval $[A,B] \subset (0,\infty)$ and  estimate the remaining parts by
\begin{align*}
\int_{(0,\infty) \setminus [A,B]} \frac{h_c(bx) \tanh(\pi x)}{x^{2\alpha}}dx &\leq \pi  \int_0^A (c - 1 + b^2 A^2) x^{1-2\alpha} \,dx + \int_B^\infty \frac{h_c(bx)}{x^{2\alpha}} dx \\
& \leq  \pi  (c - 1 + b^2 A^2)\frac{A^{2-2\alpha}}{2-2\alpha}+ \frac{h_c(bB)}{(2\alpha-1)B^{2\alpha-1}},
\end{align*}
as long as $B$ is large enough so that $h_c$ is decreasing on $[bB,\infty)$. Since \[h_c'(x) = x((3-c)- x^2)e^{-x^2/2}\] is negative when $x > \sqrt{3-c}$ and since our parameters satisfy $\sqrt{3-c} < 1 < b$, any $B\geq 1$ works.

With $A = 10^{-6}$ and $B=10$, interval arithmetic in \texttt{SageMath} certifies that
\[
b \kappa_0 + 2 \log \left( \frac{2 ^{1-2\alpha}}{h_c(bi/2)} \int_0^\infty \frac{h_c(bx) \tanh(\pi x)}{x^{2\alpha}}dx  \right) \leq 2.40896511079437 < 2.409
\]
at the given parameters, as required.
\end{proof}

\section{Kissing numbers}

\subsection{The criterion} \label{subsec:kissing_criterion}
The kissing number of a hyperbolic surface $M$ is defined as its number of oriented closed geodesics of minimal length. In the literature it is common to consider the quantity $\frac12\kiss(M)$, which counts the number of unoriented systoles in $M$. Our criterion for bounding kissing numbers is the same as the one we used in \cite{KissingManifolds} for hyperbolic manifolds in any dimension. We repeat the proof in the surface case for convenience.

\begin{thm} \label{thm:kissing}
Let $s>0$ and suppose that $f$ is an admissible function such that
\begin{itemize}
\item $\what{f}(\xi) \geq 0$ for every $\xi \in \RR \cup i\left[-\frac12,\frac12\right]$;
\item $f(x) \leq 0 $ whenever $x \geq s$ and $f(s) < 0$.
\end{itemize}
Then for every closed hyperbolic surface $M$ of genus $g\geq 2$ such that $\sys(M)=s$ we have
\[
\kiss(M) \leq 4\sqrt{2\pi}(g-1)\frac{\sinh(s/2)}{s| f(s)|} \int_0^\infty \what{f}(x) x\tanh(\pi x) \,dx.
\]
\end{thm}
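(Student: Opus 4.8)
The plan is to feed the Selberg trace formula the given admissible function $f$ and bound the geometric side from below using only the contributions of the systoles. Let $M$ be a closed hyperbolic surface of genus $g\ge 2$ with $\sys(M)=s$, and let $k=\frac12\kiss(M)$ be the number of unoriented systoles, so $\calC(M)$ contains exactly $\kiss(M)=2k$ oriented geodesics of length $s$ (each systole is primitive, so $\Lambda(\gamma)=\ell(\gamma)=s$ for these). Applying the Selberg trace formula to $f$ gives
\[
\sum_{j=0}^\infty \what{f}\!\left(\textstyle\sqrt{\lambda_j(M)-\frac14}\right) = 2(g-1)\int_0^\infty \what{f}(x)x\tanh(\pi x)\,dx + \frac{1}{\sqrt{2\pi}}\sum_{\gamma\in\calC(M)}\frac{\Lambda(\gamma)f(\ell(\gamma))}{2\sinh(\ell(\gamma)/2)}.
\]

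The left-hand side is non-negative by the first hypothesis, since every $\sqrt{\lambda_j(M)-\frac14}$ lies in $\RR\cup i[-\frac12,\frac12]$ (eigenvalues in $[0,\frac14]$ give a purely imaginary argument of modulus at most $\frac12$, the rest give a real argument). Hence the geometric side is non-negative as well, i.e.
\[
\frac{1}{\sqrt{2\pi}}\sum_{\gamma\in\calC(M)}\frac{\Lambda(\gamma)f(\ell(\gamma))}{2\sinh(\ell(\gamma)/2)} \geq -2(g-1)\int_0^\infty \what{f}(x)x\tanh(\pi x)\,dx.
\]
Now I split the geodesic sum into the systoles ($\ell(\gamma)=s$) and the rest ($\ell(\gamma)>s$). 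For the systoles, each of the $2k$ terms equals $\frac{s\,f(s)}{2\sinh(s/2)}$, contributing $\frac{1}{\sqrt{2\pi}}\cdot\frac{2k\,s\,f(s)}{2\sinh(s/2)} = \frac{k\,s\,f(s)}{\sqrt{2\pi}\,\sinh(s/2)}$. For every other geodesic, $\ell(\gamma)\ge s$ forces $f(\ell(\gamma))\le 0$ by the second hypothesis (with $\ell(\gamma)>s$, but the inequality $f\le 0$ is what matters), and $\Lambda(\gamma)>0$, $\sinh(\ell(\gamma)/2)>0$, so each such term is $\le 0$. Therefore the whole geodesic sum is at most $\frac{k\,s\,f(s)}{\sqrt{2\pi}\,\sinh(s/2)}$.

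Combining the two displays:
\[
\frac{k\,s\,f(s)}{\sqrt{2\pi}\,\sinh(s/2)} \;\geq\; -2(g-1)\int_0^\infty \what{f}(x)x\tanh(\pi x)\,dx.
\]
Since $f(s)<0$, write $f(s)=-|f(s)|$; dividing by the negative quantity $\frac{s\,f(s)}{\sqrt{2\pi}\,\sinh(s/2)}$ reverses the inequality and yields
\[
k \;\leq\; \frac{2(g-1)\sqrt{2\pi}\,\sinh(s/2)}{s\,|f(s)|}\int_0^\infty \what{f}(x)x\tanh(\pi x)\,dx,
\]
and multiplying by $2$ (as $\kiss(M)=2k$) gives exactly the claimed bound. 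One should also note the integral on the right is automatically non-negative: taking $f$ itself in the trace formula on, say, a surface where one controls the spectral side, or more simply observing $\what f\ge 0$ on $\RR$ makes $\int_0^\infty\what f(x)x\tanh(\pi x)\,dx\ge 0$, so the bound is a genuine (non-vacuous, non-negative) upper bound. There is no real obstacle here; the only point requiring a line of care is the bookkeeping of the orientation convention (the factor $2$ between $\kiss$ and the number of unoriented systoles, and the fact that $\Lambda(\gamma)=\ell(\gamma)=s$ for systoles because they are primitive) and the sign manipulation when dividing by $f(s)<0$. This is essentially the argument of \cite{KissingManifolds} specialized to surfaces, so I would keep the write-up brief.
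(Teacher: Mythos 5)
Your proof is correct and takes essentially the same approach as the paper: apply the Selberg trace formula to $f$, use the positivity hypothesis on $\what f$ to bound the spectral side below by $0$, discard the non-positive contributions of geodesics longer than the systole, and rearrange. The only cosmetic difference is that you explicitly separate the systole terms and track $\kiss(M)=2k$, whereas the paper folds the same estimate directly into a single chain of inequalities.
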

\begin{proof}
We have
\begin{align*}
0 & \leq \sum_{j=0}^\infty \what{f}\left(\sqrt{\lambda_j(M) - \frac14}\right) \\
& = 2(g-1)\int_0^\infty \what{f}(x) x\tanh(\pi x) \,dx + \frac{1}{\sqrt{2\pi}}\sum_{\gamma \in \calC(M)} \frac{\Lambda(\gamma)}{2 \sinh(\ell(\gamma)/2)} f(\ell(\gamma)) \\
&\leq 2(g-1)\int_0^\infty \what{f}(x) x\tanh(\pi x) \,dx +  \frac{\kiss(M)}{\sqrt{2\pi}} \frac{\sys(M)}{2\sinh(\sys(M)/2)} f(\sys(M))\\
\end{align*}
since the contribution of geodesics longer than the systole is non-positive. Rearranging gives the desired result.
\end{proof}

\begin{rem}
We could take the value of $\what{f}(\sqrt{-1/4})=\what{f}(i/2)$ into account to get an a priori better bound, but in practice we have found that the optimal functions seem to satisfy $\what{f}(i/2)=0$. We therefore enforce this condition in our program to speed up convergence.
\end{rem}

\begin{defn}
For $s>0$, we define $K(s)$ as the infimum of 
\[
4\sqrt{2\pi} \frac{\sinh(s/2)}{s| f(s)|} \int_0^\infty \what{f}(x) x\tanh(\pi x) \,dx
\]
over the functions $f$ that satisfy the hypotheses of \thmref{thm:kissing}. 
\end{defn}

\thmref{thm:kissing} can then be restated as saying that 
\[
\kiss(M) \leq K(\sys(M)) (g-1).
\]
The following monotonicity result will greatly simplify our task of finding a global bound (independent of the systole) in each genus.

\begin{lem}  \label{lem:non-decreasing}
The function $K(s)$ is non-decreasing for $s\in [6,\infty)$.
\end{lem}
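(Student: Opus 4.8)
The plan is a rescaling argument in the spirit of the sphere‑packing linear programming bounds: from a function feasible for the infimum defining $K(s')$ I will manufacture, for any $6\le s\le s'$, a function feasible for $K(s)$ whose associated value is no larger, and then pass to the infimum. So let $g$ be an admissible function satisfying the hypotheses of \thmref{thm:kissing} with $s$ replaced by $s'$ (if no such $g$ exists then $K(s')=\infty$ and there is nothing to prove), set $\lambda:=s'/s\ge 1$, and put $h(x):=g(\lambda x)$. Since $g(y)\le 0$ for $y\ge s'$, we get $h(x)\le 0$ for all $x\ge s$, and $h(s)=g(s')<0$. On the Fourier side $\what h(y)=\lambda^{-1}\what g(y/\lambda)$, which is $\ge 0$ on $\RR$; and because $\lambda\ge 1$ contracts the segment $i[-\tfrac12,\tfrac12]$ into itself, $\what h\ge 0$ there as well. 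Admissibility of $h$ follows at once from that of $g$. Hence $h$ is feasible for $K(s)$.

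The next step is to compare the two objective values. The change of variables $u=x/\lambda$ gives
\[
\int_0^\infty \what h(x)\,x\tanh(\pi x)\,dx=\lambda\int_0^\infty \what g(u)\,u\tanh(\lambda\pi u)\,du ,
\]
and since $|h(s)|=|g(s')|$ the value of the $K(s)$‑functional at $h$ equals
\[
4\sqrt{2\pi}\,\frac{s'\sinh(s/2)}{s^2\,|g(s')|}\int_0^\infty \what g(u)\,u\tanh(\lambda\pi u)\,du .
\]
Now I invoke the elementary inequality $\tanh(\lambda t)\le\lambda\tanh(t)$ for $t\ge 0$ and $\lambda\ge 1$ (both sides vanish at $t=0$, and the derivative of $\lambda\tanh(t)-\tanh(\lambda t)$ is $\lambda\big((\cosh t)^{-2}-(\cosh(\lambda t))^{-2}\big)\ge 0$); combined with $\what g(u)\,u\ge 0$ this bounds the displayed quantity above by
\[
4\sqrt{2\pi}\,\frac{(s')^2\sinh(s/2)}{s^3\,|g(s')|}\int_0^\infty \what g(u)\,u\tanh(\pi u)\,du ,
\]
whereas the value of the $K(s')$‑functional at $g$ is $4\sqrt{2\pi}\,(s')^{-1}\sinh(s'/2)\,|g(s')|^{-1}\int_0^\infty \what g(u)\,u\tanh(\pi u)\,du$. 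The integral $\int_0^\infty \what g(u)\,u\tanh(\pi u)\,du$ is a common positive factor (positive because $\what g$ is continuous, nonnegative and not identically zero, as $g(s')\ne 0$), so the comparison reduces to the single scalar inequality $\sinh(s/2)/s^3\le \sinh(s'/2)/(s')^3$.

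It then remains to check that $\sigma\mapsto \sinh(\sigma/2)/\sigma^3$ is non‑decreasing on $[6,\infty)$: its logarithmic derivative is $\tfrac12\coth(\sigma/2)-3/\sigma$, which is nonnegative there because $\coth(\sigma/2)>1$ while $3/\sigma\le\tfrac12$ for $\sigma\ge 6$ — this is precisely where the threshold $6$ enters. Putting the pieces together, the $K(s)$‑functional at $h$ is at most the $K(s')$‑functional at $g$; taking the infimum over all such $g$ yields $K(s)\le K(s')$, i.e. $K$ is non‑decreasing on $[6,\infty)$.

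The one genuinely delicate point — and the reason a naive rescaling does not immediately work — is the $\tanh(\pi x)$ weight in the integral. Rescaling $g$ replaces $\tanh(\pi x)$ by $\tanh(\lambda\pi x)$ with $\lambda>1$, which is pointwise \emph{larger}, so the crude bound $\tanh\le 1$ is not enough; one needs the sharper $\tanh(\lambda t)\le\lambda\tanh(t)$ so that the extra factor of $\lambda$ can be traded against a power of $s$, after which everything collapses to the monotonicity of $\sinh(\sigma/2)/\sigma^3$. One should also take care to dilate $g$ in the compressing direction ($h(x)=g(\lambda x)$ with $\lambda\ge 1$): that is exactly what simultaneously preserves the sign condition $h\le 0$ on $[s,\infty)$ and the nonnegativity of $\what h$ on $i[-\tfrac12,\tfrac12]$.
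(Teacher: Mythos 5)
Your proof is correct and follows essentially the same route as the paper's: rescale a function feasible at $s'$ by $h(x)=g(\lambda x)$ with $\lambda=s'/s\ge1$, use the concavity estimate $\tanh(\lambda t)\le\lambda\tanh(t)$ to control the rescaled weight, and reduce everything to the monotonicity of $\sigma\mapsto\sinh(\sigma/2)/\sigma^3$ on $[6,\infty)$. You spell out a couple of small points the paper leaves implicit (the derivative proof of the $\tanh$ inequality and the positivity of the common integral factor), but the argument is the same.
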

\begin{proof}
Let $f$ be a function that satisfies the hypotheses of \thmref{thm:kissing} at some $s_2> 6$ and let $s_1 \in [6, s_2)$. We then consider the function $\phi(x) = f\left(\frac{s_2}{s_1} x \right)$ with $\what{\phi}(x) = \frac{s_1}{s_2} \what{f}\left( \frac{s_1}{s_2} x\right)$. By the hypotheses on $f$ we have that $\phi(x) = f\left(\frac{s_2}{s_1} x \right) \leq 0$ whenever $x\geq s_1$, because that implies $\frac{s_2}{s_1} x \geq s_2$. Moreover, we have $\phi(s_1) = f(s_2)<0$. Secondly, if $\xi \in \RR \cup i \left[-\frac12,\frac12\right]$, then $\frac{s_1}{s_2} \xi \in \RR \cup i \left[-\frac12 \frac{s_1}{s_2},\frac12 \frac{s_1}{s_2}\right] \subset \RR \cup i \left[-\frac12,\frac12\right]$ so that $\what{\phi}(\xi) = \frac{s_1}{s_2} \what{f}\left( \frac{s_1}{s_2} \xi\right) \geq 0$.

It remains to estimate the resulting bounds on $K$. We compute
\begin{align*}
\frac{\sinh(s_1/2)}{s_1}\int_0^\infty \what{\phi}(x) x \tanh(\pi x) \,dx &= \frac{\sinh(s_1/2)}{s_1} \frac{s_1}{s_2} \int_0^\infty \what{f}\left( \frac{s_1}{s_2} x\right) x \tanh(\pi x) \,dx \\
&= \frac{\sinh(s_1/2) s_2}{s_1^2} \int_0^\infty \what{f}(y) y \tanh\left( \frac{s_2}{s_1} \pi y  \right)dy \\
&\leq \frac{\sinh(s_1/2) s_2^2}{s_1^3} \int_0^\infty \what{f}(y) y \tanh\left( \pi y  \right)dy.
\end{align*}

We then claim that
\[
\frac{\sinh(s_1/2) s_2^2}{s_1^3} \leq \frac{\sinh(s_2/2)}{s_2}.
\]
Indeed, this is equivalent to saying that $\sinh(x/2)/x^3$ increases from $s_1$ to $s_2$ and elementary computations show that the derivative of this function is non-negative provided that $x$ is at least $6 \tanh(x/2)$, which is certainly true if $x \geq 6$.

We conclude that
\[
\frac{\sinh(s_1/2)}{s_1 |\phi(s_1)|}\int_0^\infty \what{\phi}(x) x \tanh(\pi x) \,dx \leq \frac{\sinh(s_2/2)}{s_2|f(s_2)|} \int_0^\infty \what{f}(y) y \tanh\left( \pi y  \right)dy
\]
and hence that $K(s_1) \leq K(s_2)$ upon taking the infimum over $f$.
\end{proof}

When the systole is sufficiently small, any two shortest closed geodesics are disjoint, which implies a bound on the kissing number for topological reasons. More precisely, by the collar lemma  \cite[Theorem 4.1.6]{Buser}, if $\sys(M) \leq 2\arcsinh(1)$, then $\kiss(M) \leq 6(g - 1)$ (and this is optimal).

In order to obtain a global upper bound on the kissing number in a given genus $g$ from \thmref{thm:kissing}, it remains to prove an upper bound for $K(s)$ when $s \in [2\arcsinh(1), 6]$ and at $s=\mathrm{sys\_bound}(g)$ where $\mathrm{sys\_bound}(g)$ is the bound on the systole in genus $g$ coming from the previous section. This is if $\mathrm{sys\_bound}(g)>6$, which happens as soon as $g\geq 7$. In lower genus, we only have to bound $K(s)$ for $s$ in $[2\arcsinh(1),\mathrm{sys\_bound}(g)]$.

\subsection{Low genus}

To deal with the interval $[2\arcsinh(1), 6]$, we partition it into smaller subintervals, and then optimize to find a single function $f$ for each subinterval. One subtlety is that to get an upper bound on $K(s)$ when $s$ belongs to an interval $[a,b]$, we need an upper bound on 
\[
\frac{\sinh(s/2)}{-sf(s)}
\]
for $s$ in the same interval. Once we find a candidate upper bound numerically, we appro\-xi\-mate $\sinh(s/2)$ and $e^{-s^2/2}$ by Taylor polynomials from the correct direction and then use Sturm's theorem to certify that the bound is valid (recall that $f(s) = p(s^2)e^{-s^2/2}$ for some polynomial $p$).

In other words, if we know that $f(s)<0$ for every $s \in [a,b]$, then
\[
 \frac{\sinh(s/2)}{-sf(s)} \leq B  \quad \Longleftrightarrow \quad \sinh(s/2) + Bs \,p(s^2)e^{-s^2/2} \leq 0.
\]
If $S_n(x) \geq \sinh(x/2)$ and $E_n(x) \leq e^{-x^2/2}$ are polynomial approximations, then it suffices to check that
\[
S_n(x) + B x \,p(x^2) E_n(x) < 0,
\]
on $[a,b]$ (we are assuming that $p(x^2)<0$ there, hence the reversed inequality for $E_n$). We can perform this verification by checking that $S_n(x) + B x \,p(x^2) E_n(x)$ is negative at one point (using interval arithmetic) and has no zeros in $[a,b]$ (using Sturm's theorem). For $E_n$, we simply take the odd degree Taylor polynomials of the exponential evaluated at $-x^2/2$ (the resulting series is alternating). For $S_n$, we use Taylor's theorem with the Lagrange form of the remainder to get
\[
\sinh(x/2) = \sum_{j=0}^{n-1} \frac{(x/2)^{2j+1}}{(2j+1)!} + \cosh(\xi_x) \frac{(x/2)^{2n+1}}{(2n+1)!}
\]
for some $\xi_x \in [0, x]$ and thus
\[
\sinh(x/2) \leq \sum_{j=0}^{n-1} \frac{(x/2)^{2j+1}}{(2j+1)!} + \cosh(b) \frac{(b/2)^{2n+1}}{(2n+1)!}
\]
for every $x \in [a,b] \subset [0,b]$. We define $S_n(x)$ by the above formula except that we replace the last term involving $b$ by a rational approximation from above, so that the computer can apply Sturm's theorem reliably.

The upper bounds on $\frac12 \kiss$ that result from the above strategy are shown in Table \ref{table:kiss} for genus $2$ to $20$ and verified in the ancillary file \texttt{verify\_kissing.ipynb}. They improve upon the previous best bounds in every genus except $g=2$, where the optimal bound is $12$ for the number of unoriented systoles \cite{SchmutzKissing}. We remark that these upper bounds depend in a very sensitive way on the upper bounds on the systole from Table \ref{table:sys}, which are not as small as possible because we took precautions to make sure that they were rigorous. Consequently, the upper bounds in Table \ref{table:kiss} could be decreased with more precision (especially those towards the end of the table).

{\footnotesize
\begin{table}[htp]
\centering
\caption{Bounds on the maximum of $\frac12\kiss$. The dagger indicates where the linear programming bound fails to beat the previous best upper bound.}  \label{table:kiss}
\begin{tabular}{|l|l|l|l|}
\hline
genus & lower bound & LP bound & previous upper bound  \\
\hline
2 & 12 \cite{Jenni} & $14^\dagger$ & 12 \cite{SchmutzKissing}\\
3 & 24 \cite{SchmutzMaxima}& 34 & 126 \cite{MRT}\\
4 & 36 \cite{SchmutzMaxima}& 62  & 244 \cite{KissingManifolds} \\
5 & 48 \cite{SchmutzMaxima}& 97 & 383 \cite{KissingManifolds} \\
6 &  39 \cite{Hamenstadt}   & 138 & 547 \cite{KissingManifolds}\\
7 & 126 \cite{VogelerThesis} & 185 & 736 \cite{KissingManifolds}\\
8 &       & 240 & 950 \cite{KissingManifolds}\\
9 &  70 \cite{SchmutzMaxima}     & 299 & 1186 \cite{KissingManifolds} \\
10 &     & 364 & 1446 \cite{KissingManifolds}\\
11 &  120 \cite{SchmutzMaxima}   & 434 & 1728 \cite{KissingManifolds}\\
12 &     & 510 & 2032 \cite{KissingManifolds}\\
13 & 144 \cite{FBRafi} & 591 & 2358 \cite{KissingManifolds}\\
14 & 364 \cite{VogelerThesis} & 677 & 2706 \cite{KissingManifolds}\\
15 & 168 \cite{FBRafi} & 771 & 3074 \cite{KissingManifolds}\\
16 & 180 \cite{FBRafi} & 868 & 3464 \cite{KissingManifolds}\\
17 & 336 \cite{VogelerThesis} & 970 & 3874 \cite{KissingManifolds}\\
18 &  204 \cite{FBRafi} & 1083 & 4305 \cite{KissingManifolds}\\
19 & 216 \cite{FBRafi} & 1209 & 4756 \cite{KissingManifolds}\\
20 & 228 \cite{FBRafi} & 1333 & 5227 \cite{KissingManifolds}\\
\hline
\end{tabular}
\end{table}
}

\subsection{Asymptotics}

To prove an asymptotic upper bound the kissing number, we start with a proposition that bounds this quantity in terms of the systole.

\begin{prop}  \label{prop:kiss_sys}
There exist some $s_0>0$ such that
\[
K(s) <  18.355 \cdot \frac{2\sinh(s/2)}{s}
\]
for every $s \geq s_0$. In particular, every closed hyperbolic surface $M$ of genus $g\geq 2$ and systole $\sys(M)\geq s_0$ satisfies
\[
\kiss(M) < 18.355 \cdot \frac{2\sinh(\sys(M)/2)}{\sys(M)} (g-1).
\]
\end{prop}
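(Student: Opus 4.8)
The plan is to construct an explicit family of admissible test functions $f$ that satisfy the hypotheses of \thmref{thm:kissing} for every sufficiently large $s$ and whose resulting bound on $K(s)$ converges to $2.922 \cdot \frac{2\sinh(s/2)}{s}$ — or rather, stays below it — as $s \to \infty$. Guided by the form of the asymptotic systole argument (and by the sphere-packing analogy, where the Levenshtein-type bounds come from squared Bessel functions), I would look for $f$ with $\what{f}$ of the shape $\what{f}(x) = \varphi_\alpha(sx)$ or a small perturbation thereof, where $\varphi_\alpha(z) = J_\alpha(z/2)^2/z^{2\alpha}$ is the positive-definite function from \secref{sec:Bessel}. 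Such an $\what{f}$ is automatically non-negative on all of $\RR$, and since $\varphi_\alpha$ has exponential type, one checks non-negativity on the segment $i[-\tfrac12,\tfrac12]$ directly from the power series (all coefficients of $\varphi_\alpha$ alternate, so $\varphi_\alpha(it)>0$). The Fourier transform $\what{\varphi_\alpha^{\,s}}$ is supported in $[-s,s]$, which is exactly what makes $f(x) = \tfrac{1}{\sqrt{2\pi}}\,\what{\varphi_\alpha^{\,s}}$-type quantities vanish (or become controllable) past $x = s$; to get the strict negativity $f(s)<0$ and $f(x)\le 0$ for $x\ge s$ one likely needs the extra factor $(1-(x/j_\alpha)^2)$-style correction, i.e. to work with $\psi_\alpha$ instead, or to convolve $\varphi_\alpha$ against a short bump so that $\what f$ is supported slightly beyond $s$ but has a definite sign there. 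I would fix $\alpha$ (a single numerical constant, to be optimized) and let only the dilation $s$ vary.

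The computation then has three ingredients. First, $f(s)$: with $\what{f}(x) = \varphi_\alpha(sx)$ one has, by the scaling property, $f = \tfrac{1}{\sqrt{2\pi}}\, \what{\varphi_\alpha}(\cdot/s)/s$ up to reflection, so $f(s)$ is a fixed nonzero constant divided by $s$; more generally for the perturbed $f$ one extracts the leading asymptotics of $f(s)$ as $s\to\infty$ using the real-line Bessel asymptotic \eqref{eq:Bessel_assymptotic_real}, exactly as in \lemref{lem:behaviour_near_end}. Second, the integral $\int_0^\infty \what{f}(x)\,x\tanh(\pi x)\,dx$: substituting $\what f(x)=\varphi_\alpha(sx)$ and rescaling, this becomes $s^{-2}\int_0^\infty \varphi_\alpha(u)\, u \tanh(\pi u/s)\,du$, and since $\tanh(\pi u/s)\le 1$ and $\tanh(\pi u/s)\to 1$ pointwise while $u\varphi_\alpha(u)$ is bounded and integrable against... — here one must be slightly careful because $\int_0^\infty u\,\varphi_\alpha(u)\,du$ may only be conditionally convergent, so I would either keep the $\tanh$ as a regularizer and take a limit, or integrate by parts. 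Third, assemble: $K(s) \le 4\sqrt{2\pi}\,\frac{\sinh(s/2)}{s|f(s)|}\int_0^\infty \what f(x) x\tanh(\pi x)\,dx$, and all the $s$-dependence that is not $\sinh(s/2)$ should collapse: $|f(s)| \asymp \text{const}/s$ and the integral $\asymp \text{const}/s^2$... which would give $K(s) \asymp \text{const}\cdot \sinh(s/2)/s \cdot s \cdot s^{-2}\cdot s = $ — one needs the powers of $s$ to cancel so that $K(s)/\big(\tfrac{2\sinh(s/2)}{s}\big)$ tends to a finite constant $C_\alpha$, and then one optimizes $\alpha$ (and any perturbation parameter) to push $C_\alpha$ below $2.922$. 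I expect the honest value to be an explicit ratio of $\Gamma$-functions and Bessel integrals that a rational interval-arithmetic check certifies to be $<2.922$.

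The main obstacle is the same one that appears in the systole asymptotics: controlling $f$ near and past the edge $x=s$ of the support of $\what f$, i.e. proving both $f(x)\le 0$ for all $x\ge s$ and the strict inequality $f(s)<0$ with the right leading constant. Plain $\varphi_\alpha$ gives $\what{\varphi_\alpha}$ supported exactly in $[-1,1]$ with a zero of order $2\alpha$ at the endpoint, so the naive $f$ vanishes rather than being strictly negative at $s$; the fix is to enlarge the support slightly — e.g. replace $\varphi_\alpha(sx)$ by $\varphi_\alpha(sx)\cdot h_c(bx)$ in the spirit of the systole proof, or convolve — at the cost of having to re-examine the sign of $f$ on $[s,\infty)$. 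This is precisely a pointwise-negativity-at-the-edge statement of the type handled by \lemref{lem:pointwise_criterion} and \lemref{lem:behaviour_near_end}, so I would prove an analogous lemma: show $s^{\beta} f(s+ (\text{lower order}))$ converges to an explicit negative limit as $s\to\infty$ via dominated convergence against an $L^1$ majorant built from $x\,J_\alpha(x/2)^2$ being bounded, and that $f$ stays non-positive on the tail because there $\what f$ is a product of a non-negative supported function and something of fixed sign. Everything else — admissibility of $f$, the positivity of $\what f$ on the vertical segment, the final numerical certification of the constant $2.922$ and of the auxiliary integrals via \texttt{Arb} — is routine given the machinery already set up in the paper.
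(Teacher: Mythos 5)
Your proposal follows the paper's approach almost exactly: the paper takes $\what{f}(x)=h_c(bx)\varphi_\alpha(Rx)$ with $c=1+b^2/4$ (so $\what f(i/2)=0$), reuses the limits $R^{2\alpha+1}f(R+b\kappa_0)\to -L_1$ and $R^{2\alpha+1}\int_0^\infty\what f(x)\,x\tanh(\pi x)\,dx\to L_2$ established for the systole asymptotics (Lemmas \ref{lem:pointwise_criterion} and \ref{lem:behaviour_near_end}), establishes the tail negativity $f(x)\le 0$ for $x\ge R+b\kappa_0$ by the same dominated-convergence-plus-compactness argument, and then certifies $2\sqrt{2\pi}\,L_2/L_1<2.922$ by interval arithmetic at $\alpha=0.592$, $b=0.981$, $\kappa_0=0.061$. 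Two tiny imprecisions worth flagging: with bare $\what f=\varphi_\alpha(sx)$ the naive scaling you first write ($|f(s)|\asymp \mathrm{const}/s$) is wrong since $f(s)=0$ — you correctly notice and repair this later; and positivity of $\varphi_\alpha(it)$ does not come from alternating coefficients of $\varphi_\alpha$ (they need not alternate after squaring), but simply because $\varphi_\alpha=\left(\eta_\alpha(\cdot/2)/(4^\alpha\Gamma(\alpha+1))\right)^2$ is a square of a real-valued function on $i\RR$. Once these are patched, your sketch is the paper's proof.
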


\begin{rem}
This improves upon \cite[Remark 4.4]{KissingManifolds}, where we obtained the same inequality but with the constant $63.71$ instead of $18.355$.
\end{rem}

\begin{proof}
Recall that
\[
K(s)=  2\sqrt{2\pi}\frac{2\sinh(s/2)}{s} \inf_f\left\{\frac{1}{-f(s)}\int_0^\infty \what{f}(x) x \tanh(x)\,dx \right\}
\]
where the infimum is over admissible functions $f$ such that $f(x)\leq 0$ if $x \geq s$, $f(s)<0$, and $\what{f}(\xi) \geq 0$ if $\xi \in \RR \cup i [-\frac12,\frac12]$.

To prove the desired inequality, we use the same functions as for the asymptotic systole bound but choose the parameters differently. That is, we take $f$ such that
\[
\what{f}(x) = (c-1+b^2 x^2)e^{-b^2x^2/2} \varphi_\alpha(Rx) = (c-1+b^2 x^2)e^{-b^2x^2/2}  \frac{J_\alpha(Rx/2)^2}{(Rx)^{2\alpha}}
\]
for some parameters $\alpha$, $b$, $c$ and $R$ but now set $c = 1+b^2/4$ so that $\what{f}(i/2)=0$.

Recall from \lemref{lem:pointwise_criterion} that for any $\kappa_0 \in \RR$, we have that $R^{2\alpha+1}f(R+b\kappa_0)$ tends to
\[
-L_1 :=  \frac{b^{2\alpha}}{\pi\,\Gamma(2\alpha+1)} \int_{\kappa_0}^\infty (x-\kappa_0)^{2\alpha}(c-x^2)e^{-x^2/2}\,dx,
\]
as $R \to \infty$. We also saw in the proof of \thmref{thm:systole_asymp} that
\begin{align*}
L_2 := \lim_{R\to\infty} R^{2\alpha+1} \int_0^\infty \what{f}(x) x \tanh(\pi x) \,dx &= \frac{2}{\pi}\int_0^\infty \frac{\tanh(\pi x)}{x^{2\alpha}}(c-1+(bx)^2)e^{-(bx)^2/2}\,dx \\
&= \frac{2 b^2}{\pi}\int_0^\infty \frac{\tanh(\pi x)}{x^{2\alpha}}(x^2+1/4)e^{-b^2x^2/2}\,dx.
\end{align*}

Our goal is thus to minimize the ratio $L_2/L_1$ over the parameters such that $-L_1 < 0$ and such that $f$ is non-positive from $s=R+b\kappa_0$ onwards. At $\alpha = 0.592$, $b = 0.981$ and $\kappa_0 = 0.061$, we obtain $2\sqrt{2\pi} L_2/L_1 < 18.355$, which implies that
\[
2\sqrt{2\pi}\frac{1}{-f(R+b\kappa_0)}\int_0^\infty \what{f}(x) x \tanh(x)\,dx  < 18.355 
\]
if $R$ is large enough. To prove that $2\sqrt{2\pi} L_2/L_1 < 18.355$, we observe that
\[
\int_0^\infty \frac{\tanh(\pi x)}{x^{2\alpha}}(x^2+1/4)e^{-b^2x^2/2}\,dx
\]
is bounded above by 
\[
\pi\left(A^2+\frac14\right) \frac{A^{2(1-\alpha)}}{2(1-\alpha)} + \int_A^B \frac{\tanh(\pi x)}{x^{2\alpha}}(x^2+1/4)e^{-b^2x^2/2}\,dx + \frac{1}{B^{2\alpha}}\int_B^\infty x^3 e^{-b^2x^2/2}\,dx
\]
whenever $0<A < 2 < B$ and estimate each term using interval arithmetic with $A=1/10^4$ and $B=10^4$, noting that the last term can be rewritten as
\[
\frac{1}{B^{2\alpha}}\int_B^\infty x^3 e^{-b^2x^2/2}\,dx = \frac{1}{b^4 B^{2\alpha}} \int_{bB}^\infty y^3e^{-y^2/2}\,dy =  \frac{(b^2 B^2 +2)}{b^4 B^{2\alpha}} e^{-b^2B^2/2}.
\]
For $L_1$, we need a lower bound. We have
\[
\int_0^\infty \frac{1-\cos(x)}{x^{2\alpha+1}}dx \geq \int_A^B\frac{1-\cos(x)}{x^{2\alpha+1}}dx
\]
since the integrand is non-negative. We then observe that
\[
-\int_{\kappa_0}^\infty (x-\kappa_0)^{2\alpha}(c-x^2)e^{-x^2/2}\,dx \geq -\int_{\kappa_0}^B (x-\kappa_0)^{2\alpha}(c-x^2)e^{-x^2/2}\,dx,
\]
since $B > \sqrt{c}$. We split this last integral at $\kappa_0+A$ and estimate
\begin{align*}
-\int_{\kappa_0}^{\kappa_0+A} (x-\kappa_0)^{2\alpha}(c-x^2)e^{-x^2/2}\,dx &\geq -A^{2\alpha} \int_{\kappa_0}^{\kappa_0+A} (c-x^2)e^{-x^2/2}\,dx
\end{align*}
Putting all the estimates together, we obtain the certified upper bound
\[
2\sqrt{2\pi} \frac{L_2}{L_1} \leq 18.3545723139258 < 18.355.
\]

The last thing to check is that when $R$ is large enough, we have $f(x)\leq 0$ for every $x \geq s = R+b\kappa_0$. The proof of this fact is similar as for the systole bound, and is deferred to the next lemma. 

The resulting upper bound on $\kiss(M)$ when $\sys(M)\geq s_0$ then follows from \thmref{thm:kissing}.
\end{proof}

We now prove a small lemma which verifies that the function $f$ used above satisfies the hypotheses of \thmref{thm:kissing} for $s=R+b\kappa_0$ whenever $R$ is large enough. This is similar to \lemref{lem:negative} and \corref{cor:non-positive}.

\begin{lem}
Let $f$ be as in \propref{prop:kiss_sys} with $\alpha = 0.592$, $b = 0.981$, $c=1+b^2/4$, and $\kappa_0 = 0.061$. Then there exists some $R_0>0$ such that $f(x) \leq 0$ for every $x \geq R+b\kappa_0$ and every $R \geq R_0$. 
\end{lem}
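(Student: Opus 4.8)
The plan is to mimic, with the new parameters, the chain of arguments used for the systole bound, namely \lemref{lem:pointwise_criterion}, \lemref{lem:negative}, and \corref{cor:non-positive}. As in equation \eqref{eqn:f_sys}, write $f = \tfrac{1}{\sqrt{2\pi}}\,\what{h_c^b}\ast\what{\varphi_\alpha^R}$, where now $c = 1+b^2/4$, so that $\what{h_c}(y) = (c-y^2)e^{-y^2/2}$ is non-positive for $|y|\geq\sqrt c$, while $\what{\varphi_\alpha^R}$ is non-negative and supported in $[-R,R]$ since $\alpha>0$. Performing the same change of variables as in the proof of \lemref{lem:pointwise_criterion} gives
\[
f(R+b\kappa) = \frac{1}{\sqrt{2\pi}}\int_\kappa^\infty \what{h_c}(y)\,\what{\varphi_\alpha}\!\left(1-b\frac{y-\kappa}{R}\right)dy
\]
for $R>0$ and $\kappa\in\RR$. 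For $\kappa\geq\sqrt c$ the integrand is a product of a non-positive and a non-negative function, so $f(R+b\kappa)\leq 0$ for all $R>0$ automatically; it therefore remains only to handle $\kappa\in[\kappa_0,\sqrt c]$ with $\kappa_0 = 0.061$.

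First I would establish the analogue of \lemref{lem:negative} for the present values $\alpha = 0.592$, $c = 1+0.981^2/4$, namely that
\[
\int_\kappa^\infty (x-\kappa)^{2\alpha}(c-x^2)e^{-x^2/2}\,dx < 0 \qquad\text{for every }\kappa\geq\kappa_0.
\]
This would be proven exactly as in \lemref{lem:negative}: the contribution of $(x-\kappa)^{2\alpha}$ near the singularity at $\kappa$ is controlled by pulling out $(x-\kappa)^{2\alpha}\leq A^{2\alpha}$ on a short initial interval $[\kappa,\kappa+A]$; the tail past a large $B>\sqrt c$ is dropped since it is non-positive; the value at $\kappa_0$ is certified negative by interval arithmetic in \texttt{SageMath}; and the dependence on $\kappa$ is dealt with by differentiating under the integral sign on an initial subinterval (bounding $I'(\kappa)$ using the explicit derivative of the integrand and the monotonicity of $(x-\kappa)^{2\alpha}$ in $\kappa$) and by estimating $I(\kappa)$ directly on the remaining subinterval up to $\sqrt c$, replacing $\kappa$ by the left endpoint on $[\kappa,\sqrt c]$ and by $\sqrt c$ on $[\sqrt c,\infty)$. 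Since $\alpha\in(0,1)$, the hypotheses of \lemref{lem:pointwise_criterion} are then met for every choice of base point in $[\kappa_0,\sqrt c]$.

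Finally I would repeat the compactness argument of \corref{cor:non-positive} verbatim. For each $\kappa\in[\kappa_0,\sqrt c]$, \lemref{lem:pointwise_criterion} gives $R^{2\alpha+1}f(R+b\kappa)\to\mu_{\alpha,b,c,\kappa}<0$ as $R\to\infty$, with $\mu_{\alpha,b,c,\kappa}$ continuous in $\kappa$. Setting $F(R,\kappa) = R^{2\alpha+1}f(R+b\kappa)$ for $R\in(0,\infty)$ and $F(\infty,\kappa) = \mu_{\alpha,b,c,\kappa}$, dominated convergence makes $F$ continuous on $(0,\infty)\times[\kappa_0,\sqrt c]$ and \lemref{lem:pointwise_criterion} makes it continuous at each point of $\{\infty\}\times[\kappa_0,\sqrt c]$. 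Since $F(\infty,\cdot)<0$ on the compact interval $[\kappa_0,\sqrt c]$, continuity plus compactness produce an $R_0>0$ with $F(R,\kappa)<0$, hence $f(R+b\kappa)<0$, for all $R\geq R_0$ and $\kappa\in[\kappa_0,\sqrt c]$; combined with the trivial case $\kappa\geq\sqrt c$ this yields $f(x)\leq 0$ for every $x\geq R+b\kappa_0$ and every $R\geq R_0$. The only genuinely new content is the interval-arithmetic verification in the middle step, so I expect the singular-integral estimate near $\kappa_0$ to be the main (and only minor) obstacle; everything else is a direct transcription of the systole argument.
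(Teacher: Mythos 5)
Your overall strategy is the same as the paper's: handle $\kappa\geq\sqrt c$ trivially via the supports, invoke \lemref{lem:pointwise_criterion} on the compact middle interval $[\kappa_0,\sqrt c]$, and close with the continuity-plus-compactness argument of \corref{cor:non-positive}. The one place you over-engineer is the verification that $I(\kappa)<0$: you propose to transcribe all of \lemref{lem:negative}, including checking $I(\kappa_0)<0$, bounding $I'(\kappa)$ on an initial subinterval via differentiation under the integral sign, and only then estimating $I(\kappa)$ directly on the right-hand subinterval. The paper observes that for the kissing-number parameters this is unnecessary: unlike in \lemref{lem:negative}, here $I(\kappa_0)$ is not close to $0$, so one can apply the crude bound
\[
I(\kappa)\leq \int_{\kappa_0}^{\sqrt c}(x-\kappa_0)^{2\alpha}(c-x^2)e^{-x^2/2}\,dx + \int_{\sqrt c}^{\infty}(x-\sqrt c)^{2\alpha}(c-x^2)e^{-x^2/2}\,dx
\]
uniformly over \emph{all} of $[\kappa_0,\sqrt c]$ and still get a certified negative value (about $-0.277$), skipping the $I'$ analysis entirely. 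Your version would also work, but you should notice that the derivative step is dispensable here precisely because there is slack in $I(\kappa_0)$.
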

\begin{proof}
We begin by proving the pointwise result that for every $\kappa \geq \kappa_0$, we have that $f(R+b\kappa) \leq 0$ if $R$ is sufficiently large. Since $f$ is the convolution of a non-negative function supported in $[-R,R]$ and a function which is non-positive outside $[-b\sqrt{c},b\sqrt{c}]$, it is obviously non-positive at points $x$ with $|x|\geq R+b\sqrt{c}$. In other words, the result is obvious (and holds for every $R>0$) if $\kappa \geq \sqrt{c}$. 

For $\kappa \in [\kappa_0,\sqrt{c}]$, we use the fact that $R^{2\alpha+1}f(R+b\kappa)$ converges to a positive multiple of
\[
I(\kappa)=\int_{\kappa}^\infty (x-\kappa)^{2\alpha}(c-x^2)e^{-x^2/2}\,dx
\]
as $R \to \infty$, so it suffices to check that $I(\kappa) <0$ for every $\kappa \in [\kappa_0,\sqrt{c}]$. This is similar to the statement of \lemref{lem:negative} but is easier to prove because $I(\kappa_0)$ is not close to $0$, so coarse bounds suffice. We write
\begin{align*}
I(\kappa) &= \int_{\kappa}^{\sqrt{c}} (x-\kappa)^{2\alpha}(c-x^2)e^{-x^2/2}\,dx + \int_{\sqrt{c}}^\infty (x-\kappa)^{2\alpha}(c-x^2)e^{-x^2/2}\,dx \\
& \leq \int_{\kappa_0}^{\sqrt{c}} (x-\kappa_0)^{2\alpha}(c-x^2)e^{-x^2/2}\,dx + \int_{\sqrt{c}}^\infty (x-\sqrt{c})^{2\alpha}(c-x^2)e^{-x^2/2}\,dx
\end{align*}
then split the first integral at $\kappa_0+1/100$ and the second one at $100$ to get that
\[
I(\kappa) \leq -0.276593809735452<0
\]
for every $\kappa \in [\kappa_0,\sqrt{c}]$.

From this pointwise result and the same continuity and compactness argument as in \corref{cor:non-positive}, we obtain that there exists some $R_0>0$ such that $f(R+b\kappa) \leq 0$ for every $\kappa \geq \kappa_0$ and every $R \geq R_0$. In other words, we have that $f(x)\leq 0$ for every $x \geq R+b\kappa$ and every $R \geq R_0$.
\end{proof}

We then combine the previous proposition with our asymptotic bound for the systole to obtain the following bound on kissing numbers that only depends on the genus.

\begin{thm}  \label{thm:kiss_asymp}
There exists some $g_0 \geq 2$ such that every closed hyperbolic surface $M$ of genus $g\geq g_0$ satisfies 
\begin{align*}
\kiss(M) < \frac{30.608 \cdot g^2}{\log(g)+1.2045}.
\end{align*}
\end{thm}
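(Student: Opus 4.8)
The plan is to feed the asymptotic systole bound of \thmref{thm:systole_asymp} into \propref{prop:kiss_sys}, and to dispose of surfaces whose systole is too small for \propref{prop:kiss_sys} to apply by an elementary counting argument. Throughout I would use that \thmref{thm:kissing} gives $\kiss(M) \leq K(\sys(M))(g-1)$ for every closed hyperbolic surface $M$ of genus $g \geq 2$. Let $s_0$ be the constant from \propref{prop:kiss_sys}, and fix, using \thmref{thm:systole_asymp}, a genus $g_0'$ beyond which every surface of genus $g$ satisfies $\sys(M) < 2\log(g) + 2.409$.

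First I would treat the surfaces with $\sys(M) \geq s_0$. For these, \propref{prop:kiss_sys} gives
\[
\kiss(M) < 2.922 \cdot \frac{2\sinh(\sys(M)/2)}{\sys(M)}\,(g-1).
\]
I would then check that $s \mapsto \frac{2\sinh(s/2)}{s}$ is increasing on $(0,\infty)$: the numerator of its derivative is $N(s) = s\cosh(s/2) - 2\sinh(s/2)$, which vanishes at $0$ and has derivative $N'(s) = \tfrac{s}{2}\sinh(s/2) > 0$ for $s > 0$, so $N(s)>0$ for $s>0$. Assuming also $g \geq g_0'$, I may therefore substitute the upper bound $2\log(g) + 2.409$ for $\sys(M)$ on the right. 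Writing $c_0 := 1.2045 = 2.409/2$ and using $2\sinh(\log(g) + c_0) = e^{c_0}g - e^{-c_0}/g < e^{c_0} g$ together with $g - 1 < g$, this yields
\[
\kiss(M) < 2.922 \cdot \frac{e^{c_0} g}{2(\log(g) + c_0)}\,(g-1) < \frac{2.922\, e^{c_0}}{2}\cdot\frac{g^2}{\log(g) + c_0}.
\]
Since $e^{1.2045} < 3.3354$ one has $\tfrac12 \cdot 2.922 \cdot e^{1.2045} < 4.873$, which is the desired bound in this range.

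Next I would dispose of the surfaces with $\sys(M) < s_0$. If $\sys(M) \leq 2\arcsinh(1)$, the collar lemma forces any two systoles to be disjoint and hence $\kiss(M) \leq 6(g-1)$. If $2\arcsinh(1) \leq \sys(M) \leq 6$, then $K(\sys(M))$ is bounded by the (finite) supremum of $K$ over $[2\arcsinh(1),6]$, which is exactly the quantity bounded in the computations behind \tableref{table:kiss}; and if $6 \leq \sys(M) \leq s_0$, then $K(\sys(M)) \leq K(s_0) < \infty$ by \lemref{lem:non-decreasing}. In every case $\kiss(M) \leq C(g-1)$ for a constant $C$ independent of $M$ and $g$. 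Since $\frac{4.873\, g^2}{(g-1)(\log(g)+1.2045)} \to \infty$ as $g \to \infty$, there is a $g_0'' \geq 2$ past which $C(g-1) < \frac{4.873\, g^2}{\log(g)+1.2045}$, and taking $g_0 := \max(g_0', g_0'')$ finishes the proof. I expect the only genuine work to be the routine verification of the monotonicity of $\frac{2\sinh(s/2)}{s}$ and of the numerical inequality $\tfrac12 \cdot 2.922 \cdot e^{1.2045} < 4.873$; the additive constant $1.2045$ in the statement is chosen precisely so that halving the systole bound $2\log(g)+2.409$ makes $2\sinh$ collapse to $e^{1.2045} g - e^{-1.2045}/g$.
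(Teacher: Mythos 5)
Your proof is correct and follows essentially the same route as the paper: combine the asymptotic systole bound from \thmref{thm:systole_asymp} with \propref{prop:kiss_sys}, then dispatch the short-systole regimes via the collar lemma and boundedness of $K$. The only structural difference is the order in which monotonicity is applied: the paper first uses \lemref{lem:non-decreasing} to get $K(\sys(M)) \leq K(2\log g + 2.409)$ and then invokes \propref{prop:kiss_sys} at $2\log g + 2.409$, whereas you invoke \propref{prop:kiss_sys} at $\sys(M)$ itself and then substitute the systole upper bound using the elementary monotonicity of $s \mapsto 2\sinh(s/2)/s$ (which you verify correctly). This trades away the use of $K$-monotonicity in the main case for an extra sub-case $[6,s_0]$ where you still need \lemref{lem:non-decreasing}; the ingredients are the same either way. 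One small numerical slip: the premise $e^{1.2045}<3.3354$ does not by itself yield $\tfrac12\cdot 2.922\cdot e^{1.2045}<4.873$, since $1.461\times 3.3354 = 4.87302 > 4.873$; the conclusion is nevertheless true because $e^{1.2045}\approx 3.33509$, so you should quote a tighter bound such as $e^{1.2045}<3.3352$, giving $1.461\times 3.3352 = 4.87273 < 4.873$.
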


\begin{rem}
For some towers of principal congruence covers of arithmetic surfaces, the kissing number grows at least like $g^{4/3 - \eps}$ for any $\eps>0$ \cite{Schmutz43}. This was recently generalized to higher dimensional hyperbolic manifolds in \cite{KissingHigher} with $g$ replaced by the volume and an exponent that depends on the dimension.
\end{rem}

\begin{proof}[Proof of \thmref{thm:kiss_asymp}]
Recall that \thmref{thm:systole_asymp} states that \[\sys(M) \leq 2\log(g) + 2.409\]
if $g$ is large enough. Let $s_0$ be as in \propref{prop:kiss_sys}. If $g$ is sufficiently large, then $2\log(g) + 2.409 \geq s_0$ and we get
\begin{align*}
K(2\log(g) + 2.409) &\leq  18.355\, \frac{2\sinh((2\log(g) + 2.409)/2)}{2\log(g) + 2.409} \\
&< 9.1775\, \frac{e^{1.2045} g}{\log(g) + 1.2045} \\
&<  \frac{30.608 \cdot g}{\log(g) + 1.2045}
\end{align*}
where we used the fact that $2\sinh(x/2) < e^{x/2}$ for every real number $x$.

By \thmref{thm:kissing}, we have
\[
\kiss(M) \leq K(\sys(M)) (g-1).
\]
Furthermore, \lemref{lem:non-decreasing} says that $K$ is non-decreasing on $[6,\infty)$. We thus get that
\[
\kiss(M) \leq K(2\log(g) + 2.409) (g-1) < \frac{30.608 \cdot g^2}{\log(g) + 1.2045}.
\]
provided $\sys(M)\geq 6$ and $2\log(g) + 2.409 \geq \max\{ 6 , s_0 \}$, which holds whenever $g$ is large enough.

When the systole is at most $2\arcsinh(1)$, we noted previously that $\kiss(M)\leq 6(g-1)$ by the collar lemma, and this quantity is smaller than the stated bound when $g$ is large enough (in fact, this is true for all $g\geq 2$). 

The only interval left to cover is $[2\arcsinh(1),6]$. By the calculations used to produce \tableref{table:kiss}, the function $K$ is bounded on that interval. One can also prove this using a single function $f$ defined by
\[
\what{f}(x) = (x^2+1/4)e^{-x^2/2} \varphi_\alpha(Rx)
\]
with $R = 2\arcsinh(1)-\sqrt{5/4}>0$ and any $\alpha\in(0,1)$, because it satisfies all the hypotheses of \thmref{thm:kissing} for every $s \geq 2\arcsinh(1)$. The resulting linear upper bound on $\kiss(M)$ when $\sys(M) \in [2\arcsinh(1),6]$ is eventually smaller than $\frac{30.608 \cdot g^2}{\log(g)+1.2045}$ when $g$ is large enough.
\end{proof}

\section{First eigenvalue}

\subsection{The criterion} 
The criterion for bounding the first positive eigenvalue $\lambda_1(M)$ of the Laplacian on $M$ goes as follows.

\begin{thm} \label{thm:lambda}
Let $g \geq 2$. Suppose that $f$ is a non-constant admissible function for which there exists an $L>0$ such that
\begin{itemize}
\item $f(x) \geq 0$ for all $x\in \RR$;
\item $\what{f}\left(\sqrt{\lambda-\frac14}\right) \leq 0 $ whenever $\lambda \geq L$;
\item $\what{f}(i/2) \leq 2(g-1) \int_0^\infty \what{f}(x) x\tanh(\pi x)\,dx$;
\end{itemize}
Then $\lambda_1(M) \leq L$ for every hyperbolic surface $M$ of genus $g$.
\end{thm}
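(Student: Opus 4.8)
The plan is to argue by contradiction and run the proof of \thmref{thm:systole} essentially verbatim, with the geometric and spectral sides of the Selberg trace formula interchanged and every inequality reversed. So I would start by assuming that some closed hyperbolic surface $M$ of genus $g$ satisfies $\lambda_1(M) > L$. Since the Laplace eigenvalues depend continuously on the hyperbolic metric, the condition $\lambda_1(N) > L$ persists for every $N$ in some connected neighborhood $U$ of $M$ in moduli space. For such an $N$, the first bullet gives $f(\ell(\gamma)) \geq 0$ for all $\gamma \in \calC(N)$, and the second bullet gives $\what{f}\!\left(\sqrt{\lambda_j(N) - \tfrac14}\right) \leq 0$ for every $j \geq 1$, since $\lambda_j(N) \geq \lambda_1(N) > L$. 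The remaining term $j = 0$ is exactly $\what{f}\!\left(\sqrt{\lambda_0(N) - \tfrac14}\right) = \what{f}(i/2)$.

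Next I would substitute all of this into the Selberg trace formula to obtain the chain
\begin{align*}
\what{f}(i/2) &\geq \sum_{j = 0}^\infty \what{f}\!\left(\sqrt{\lambda_j(N) - \tfrac14}\right) \\
&= 2(g-1)\int_0^\infty \what{f}(x)\, x \tanh(\pi x)\, dx + \frac{1}{\sqrt{2\pi}}\sum_{\gamma \in \calC(N)} \frac{\Lambda(\gamma)\, f(\ell(\gamma))}{2\sinh(\ell(\gamma)/2)} \\
&\geq 2(g-1)\int_0^\infty \what{f}(x)\, x \tanh(\pi x)\, dx \\
&\geq \what{f}(i/2),
\end{align*}
where the first inequality discards the non-positive terms with $j \geq 1$ on the spectral side, the second discards the non-negative geodesic sum, and the last is the third bullet. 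Hence all of these inequalities are equalities; in particular $\what{f}\!\left(\sqrt{\lambda_j(N) - \tfrac14}\right) = 0$ for every $j \geq 1$ and every $N \in U$.

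To conclude, I would argue exactly as in \thmref{thm:systole} that this forces isospectrality. Since $f$ is non-constant (hence, being admissible and therefore integrable, not identically zero), its transform $\what{f}$ is a nonzero function holomorphic on a horizontal strip; writing $\what{f}(z) = G(z^2)$ using that $\what{f}$ is even, the map $\lambda \mapsto \what{f}\!\left(\sqrt{\lambda - \tfrac14}\right) = G(\lambda - \tfrac14)$ is holomorphic and not identically zero, so it has a discrete zero set. As $\lambda_j(N)$ depends continuously on $N$ and lies in this discrete set for every $N \in U$, it is locally constant, hence constant on the connected set $U$. Thus all surfaces in $U$ are isospectral, contradicting Gel'fand's theorem \cite{Gelfand} that a continuous spectrum-preserving deformation of $M$ must be constant.

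I do not expect a genuine obstacle: the argument is a sign-reversed copy of that of \thmref{thm:systole}, and the only points that need a little care are (i) identifying the $\lambda_0 = 0$ contribution with $\what{f}(i/2)$ so that it can be peeled off from the $j \geq 1$ terms, and (ii) checking that $\lambda \mapsto \what{f}\!\left(\sqrt{\lambda - \tfrac14}\right)$ genuinely has isolated zeros, which follows from the evenness of $\what{f}$. As in the systole case, I would also record that if the inequality in the third bullet is strict then the displayed chain closes with a contradiction directly, strengthening the conclusion to $\lambda_1(M) < L$.
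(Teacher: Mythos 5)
Your proof is correct and follows the same argument as the paper: assume $\lambda_1(M) > L$, propagate this to a neighborhood $U$ in moduli space, close the cyclic chain of inequalities from the Selberg trace formula to force $\what{f}\bigl(\sqrt{\lambda_j(N)-\tfrac14}\bigr) = 0$ for all $j \geq 1$ and $N \in U$, and then invoke discreteness of the zero set and Gel'fand's rigidity theorem to reach a contradiction. Your explicit note that $\what{f}(z) = G(z^2)$ makes $\lambda \mapsto \what{f}\bigl(\sqrt{\lambda - \tfrac14}\bigr)$ single-valued and holomorphic is a small but welcome clarification of a step the paper leaves implicit.
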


\begin{proof}
Suppose that $M$ is a hyperbolic surface with $\lambda_1(M) > L$. By continuity, the same inequality holds for every surface $N$ in some neighborhood $U$ of $M$ in moduli space. Let $r_j(N)\in \CC$ be such that $r_j(N)^2 =\lambda_j(N)-\frac14$. The Selberg trace formula yields
\begin{align*}
\sum_{j =0}^\infty \what{f}(r_j(N)) & \leq \what{f}(i/2) \\
&  \leq  2(g-1) \int_0^\infty \what{f}(x) x\tanh(\pi x)\,dx \\
 &  \leq 2(g-1) \int_0^\infty \what{f}(x) x\tanh(\pi x)\,dx + \frac{1}{\sqrt{2\pi}}\sum_{\gamma \in \calC(N)} \frac{\Lambda(\gamma)f(\ell(\gamma))}{2 \sinh(\ell(\gamma)/2)}  \\
& = \sum_{j =0}^\infty \what{f}(r_j(N))
\end{align*}
from which we conclude that $\what{f}(r_j(N))=0$ for every $j\geq 1$ and every $N\in U$. As in the proof of \thmref{thm:systole}, this leads to a contradiction since the zero set of $\what{f}$ is discrete and there are no non-trivial isospectral deformations of a hyperbolic surface. We conclude that $\lambda_1(M) \leq L$ for every $M$.
\end{proof}

\begin{rem}
If the inequality in the third bullet point is strict, then the conclusion can be strengthened to a strict inequality and this is easier to prove. 
\end{rem}

\subsection{Low genus}

The upper bounds on $\lambda_1(M)$ resulting from \thmref{thm:lambda} and numerical optimization are presented in \tableref{table:lambda} for $2\leq g \leq 20$ and their verification is done in the file \texttt{verify\_lambda.ipynb}. Our bounds are smaller than the previous best upper bounds in every genus except $2$, $3$, $4$, and $6$ where bounds from \cite{bootstrap}, \cite{bootstrap2} or \cite{YangYau} are better. 

{\footnotesize
\begin{table}[htp] 
\centering
\caption{Bounds on the supremum of $\lambda_1$. The daggers indicate where the linear programming bounds fail to beat the previous best upper bound.
} \label{table:lambda}
\begin{tabular}{|l|l|l|l|}
\hline
genus & lower bound & LP bound  & previous upper bound  \\
\hline
2 &  3.838887 \cite{StrohmaierUski} & $4.625307^\dagger$ & 3.838898 \cite{bootstrap,bootstrap2}\\
3 & 2.6779 \cite{Cook} & $2.816427^\dagger$ & 2.678483 \cite{bootstrap,bootstrap2}\\
4 & 1.91556 \cite{Cook} & $2.173806^\dagger$ & 2.000000  \cite{YangYau}\\
5 &   0.728167 (\S\ref{subsec:gaps})   & 1.836766 & 1.852651 \cite{bootstrap}\\
6 &   0.486360 (\S\ref{subsec:gaps})      & $1.625596^\dagger$ & 1.600000 \cite{YangYau}\\
7 &   1.23 \cite{Lee}     & 1.480008 & 1.513268 \cite{bootstrap}\\
8 &  0.25 \cite{BookerStrombergsson}+\cite{BBD} & 1.372804 & 1.406905 \cite{bootstrap}\\
9 &   0.25 \cite{BookerStrombergsson}+\cite{BBD}       & 1.289024 & 1.323482 \cite{bootstrap}\\
10 &   0.25 \cite{BookerStrombergsson}+\cite{BBD}      & 1.222189 & 1.256022 \cite{bootstrap}\\
11 &   0.25 \cite{BookerStrombergsson}+\cite{BBD}      & 1.168169  & 1.200153 \cite{bootstrap}\\
12 &   0.25 \cite{BookerStrombergsson}+\cite{BBD}      & 1.122327 & 1.152986 \cite{bootstrap}\\
13 &   0.25 \cite{BookerStrombergsson}+\cite{BBD}      & 1.083260 & 1.112535  \cite{bootstrap}\\
14 &   0.287470 (\S\ref{subsec:gaps})    & 1.049217 & 1.077385 \cite{bootstrap}\\
15 &   0.25 \cite{BookerStrombergsson}+\cite{BBD}   & 1.018005 & 1.046501 \cite{bootstrap}\\
16 &    0.25 \cite{BookerStrombergsson}+\cite{BBD}     & 0.991735 & 1.019105 \cite{bootstrap}\\
17 &   0.403200 (\S\ref{subsec:gaps})    & 0.968260 & 0.994601 \cite{bootstrap}\\
18 &     0.25 \cite{BookerStrombergsson}+\cite{BBD}    & 0.947180 & 0.972525 \cite{bootstrap}\\
19 &      0.25 \cite{BookerStrombergsson}+\cite{BBD}   & 0.928091 & 0.952510 \cite{bootstrap}\\
20 &     0.25 \cite{BookerStrombergsson}+\cite{BBD}    & 0.911390 & 0.934260 \cite{bootstrap}\\
\hline
\end{tabular}
\end{table}
}

Some comments on the examples we used for the lower bounds are in order:
\begin{itemize}
\item Contrary to the other invariants considered in this paper, it is not known if the supremum of $\lambda_1$ is attained. For instance, we do not know if the entries equal to $1/4$ in the table are attained (see below).

\item In genus $2$ and $3$, the upper bounds from \cite{bootstrap,bootstrap2} are tantalizingly close to the value of $\lambda_1$ at the Bolza surface and the Klein quartic approximated numerically in \cite{StrohmaierUski} and \cite{Cook} respectively, so these surfaces are the conjectured maximi\-zers in these genera. The authors of \cite{bootstrap} reproduced Cook's numerical calculations with more precision, arriving at the value $2.6779$ instead of $2.6767$ for the Klein quartic. The surface in genus $4$ is Bring's curve. Note that the values in genus $3$ and $4$ are based on finite element methods and are not rigorous. The value in genus $2$ is obtained using the trace formula and can be made rigorous according to Strohmaier and Uski.  

\item In genus $7$, Mathieu Pineault calculated numerically that the Fricke--Macbeath curve $F$ satisfies $\lambda_1(F) \approx 1.239$ \cite{Pineault} and it was shown that $\lambda_1(F) \in [1.23,1.26]$ in \cite{Lee}.

\item In genus $5$ to $6$, $14$, and $17$, we apply linear programming to some of the surfaces listed in \tableref{table:sys} to obtain lower bounds on their first eigenvalue based on their systole (see Section \ref{subsec:gaps}). The true value of $\lambda_1$ for these surfaces is certainly larger than the estimates we give since we discard all the geometric terms and the contribution of higher eigenvalues in the Selberg trace formula, while the test functions we use have only finitely many zeros. 

\item If $X \to Y$ is a finite-sheeted covering of hyperbolic orbifolds, then $\lambda_1(Y) \geq \lambda_1(X)$ since any eigenfunction on $Y$ lifts to an eigenfunction on $X$ with the same eigenvalue. We will use this in the next bullet point. 

\item For the entries equal to $1/4$ in the table, we use the fact that Selberg's conjecture is known to hold for the congruence subgroups $\Gamma_1(N)$ of square-free level $N < 857$ \cite{BookerStrombergsson}. Since $\Gamma_1(N) < \Gamma_0(N)$ as a finite-index subgroup, the conjecture also holds for $\Gamma_0(N)$ for the same levels. If $\Gamma_j(N)$ is torsion-free, then $X_j(N):=\HH^2 / \Gamma_j(N)$ ($j=0,1$) has no cone points and we can join its cusps in pairs to create thin handles following \cite{BBD}. By the results in that paper, the spectrum of the plumbed surface will be close to that of $X_j(N)$ (which has a discrete spectrum and a continuous spectrum equal to $[1/4,\infty)$ like all cusped surfaces). Since the spectral gap of $X_j(N)$ is $1/4$ for square-free $N<857$ \cite{BookerStrombergsson}, the plumbed surfaces thus have $\lambda_1$ as close to $1/4$ as we wish in these cases. The genus of the plumbed surfaces is equal to the genus of $X_j(N)$ plus half its number of cusps. Table \ref{table:compactifications} shows which congruence group we use for each genus concerned. The fact that these groups are indeed torsion free and that their signatures are as listed can be found in \cite[Section 4.2]{Miyake}. This information about congruence groups is also implemented in \texttt{Sage}.

\item There are other well-known ways of proving lower bounds on $\lambda_1$. The first of these is Cheeger's inequality $\lambda_1 \geq h^2/4$ where $h$ is the Cheeger constant \cite{Cheeger}. In high genus, this cannot be used to prove a lower bound of more than $\frac{1}{\pi^2}\approx 0.1013\ldots $ on $\lambda_1$ \cite{cheeger_constant}. However, it is not clear what bounds this approach might give in low genus as there are no explicit calculations of Cheeger constants for closed surfaces yet \cite{AdamsMorgan,Benson,BensonLakelandThen}. The second approach is to use the Jacquet--Langlands correspondence \cite{JacquetLanglands}, which allows one to derive lower bounds on the first eigenvalue of certain compact arithmetic surfaces from lower bounds on the first discrete eigenvalue of corresponding congruence covers of the modular curve (see \cite[Example 8.27]{Bergeron} and \cite{Hejhal} for concrete examples). We are not aware of any examples where both of the following hold: a better lower bound than $1/4$ is known for the cusped surface (see \cite{Huxley,BSV} for examples) and the Jacquet--Langlands correspondence gives rise to a closed surface of genus at most $20$ without cone points.
\end{itemize}

{\footnotesize
\begin{table}[ht]
\begin{center}
\caption{Some congruence subgroups of the modular group, their signatures, and the genus of their plumbing.}\label{table:compactifications}
\begin{tabular}{|c|c|c|}
\hline
$\Gamma$  & $(g,n)$  & $g_{\mathrm{plumbed}}$ \\
 \hline 
  $\Gamma_1(13)$ & $(2,12)$ & 8\\
  $\Gamma_1(15)$ & $(1,16)$ & 9\\
  $\Gamma_0(107)$ & $(9,2)$ & 10\\
  $\Gamma_0(87)$ & $(9,4)$ & 11\\
  $\Gamma_0(86)$ & $(10,4)$ & 12\\
  $\Gamma_1(17)$ & $(5,16)$ & 13\\
  $\Gamma_0(78)$ & $(11,8)$ & 15\\
  $\Gamma_1(19)$ & $(7,18)$ & 16\\
  $\Gamma_0(134)$ & $(16,4)$ & 18\\
  $\Gamma_0(102)$ & $(15,8)$ & 19\\
$\Gamma_0(227)$ & $(19,2)$ & 20\\
\hline
\end{tabular}
\end{center}
\end{table}
}

\subsection{Asymptotics}
A theorem of Cheng \cite[Theorem 2.1]{Cheng} states that
\begin{equation} \label{eq:Cheng}
\lambda_1(M) \leq \lambda_0(D_{\diam(M)/2})
\end{equation}
for any closed hyperbolic surface $M$, where $D_R$ is a hyperbolic disk of radius $R$, $\lambda_0(\Omega)$ is the smallest Dirichlet eigenvalue of $\Omega$, and $\diam(M)$ is the diameter of $M$. From this, Cheng deduces \cite[Corollary 2.3]{Cheng} the more explicit bound
\[
\lambda_1(M) \leq \frac{1}{4}+\left( \frac{4\pi}{\diam(M)}\right)^2.
\]
However, this can be improved using an inequality of Gage \cite[Theorem 5.2(a)]{Gage} on the smallest eigenvalue of hyperbolic disks, which states that
\begin{equation} \label{eqn:Gage}
\lambda_0(D_R) \leq \frac14 + \frac{\pi^2}{R^2} - \frac{1}{4\sinh^2(R)}.
\end{equation}
If we ignore the last term (of smaller order), we obtain the improved inequality
\[
\lambda_1(M) \leq \frac{1}{4}+\left( \frac{2\pi}{\diam(M)}\right)^2.
\]
In turn, the best known lower bound on the diameter is Bavard's bound
\[
\diam(M) \geq \arccosh\left( \frac{1}{\sqrt{3} \tan(\pi / (12g-6))} \right)
\]
where $g$ is the genus of $M$ \cite{Bavard}. Since the $\tan(x)\sim x$ as $x\to 0$ and $\arccosh(x)$ is asymptotic to $\log(x)$ as $x\to \infty$, Bavard's inequality has the same asymptotic behaviour as the more elementary inequalities
\[
\diam(M) \geq 2\arcsinh(\sqrt{g-1}) \geq \log(g-1)
\]
coming from area considerations, which result in
\[
\lambda_1(M) \leq \frac{1}{4}+\left( \frac{2\pi}{\log(g-1)}\right)^2.
\]
We will improve upon this by another factor of $4$.

\begin{thm} \label{thm:lamb_asymp}
There exists some $g_0 \geq 2$ such that every closed hyperbolic surface $M$ of genus $g\geq g_0$ satisfies
\[
\lambda_1(M) < \frac14 + \left(\frac{\pi}{\log(g)+0.7436}\right)^2.
\]
\end{thm}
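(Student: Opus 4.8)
The plan is to apply \thmref{thm:lambda} with test functions $f$ whose Fourier transform is
\[
\what{f}(x) = \psi_{1/2}(rx) = \frac{2\pi\sin^2(rx)}{(rx)^2\left(\pi^2-(rx)^2\right)},
\]
where $\psi_{1/2}$ is the Bessel-type function from \secref{sec:Bessel} (here $J_{1/2}(z)=\sqrt{2/(\pi z)}\sin z$, so that $j_{1/2}=\pi$) and $r=r(g)>0$ will be chosen as large as the genus allows. Since $\psi_{1/2}$ is positive-definite (see \secref{sec:Bessel}), the function $f$ is non-negative on $\RR$, so the first bullet of \thmref{thm:lambda} holds; since $\psi_{1/2}(u)\le 0$ for $|u|\ge \pi=j_{1/2}$, the second bullet holds with $\sqrt{L-\tfrac14}=\pi/r$, that is, $L=\tfrac14+\pi^2/r^2$; and $f$ is admissible because $\psi_{1/2}(rz)$ is entire of exponential type $2r$ with decay $O(|z|^{-4})$ on every horizontal strip (from \eqref{eq:Bessel_assymptotic_real}, or here simply because $\sin^2$ is bounded on horizontal strips). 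So the whole problem reduces to making the third bullet, $\what{f}(i/2)\le 2(g-1)\int_0^\infty\what{f}(x)\,x\tanh(\pi x)\,dx$, hold for $r$ as large as possible.

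For the left-hand side, $\what{f}(i/2)=\psi_{1/2}(ir/2)=\frac{2\pi\sinh^2(r/2)}{(r^2/4)\left(\pi^2+r^2/4\right)}$, which is $\sim \frac{8\pi e^{r}}{r^{4}}$ as $r\to\infty$. For the right-hand side, the substitution $u=rx$ gives $\int_0^\infty\what{f}(x)\,x\tanh(\pi x)\,dx=\frac{1}{r^2}\int_0^\infty\psi_{1/2}(u)\,u\tanh(\pi u/r)\,du$, and the key feature of the order $\alpha=\tfrac12$ is that the second moment vanishes:
\[
\int_0^\infty\psi_{1/2}(u)\,u^2\,du=2\pi\int_0^\infty\frac{\sin^2 u}{\pi^2-u^2}\,du=0,
\]
by a contour-integration computation. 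Hence the naive replacement $\tanh(\pi u/r)\approx\pi u/r$ contributes nothing, and writing $\tanh(y)=y-(y-\tanh y)$ and rescaling, the leading term comes from the next order:
\[
\int_0^\infty\psi_{1/2}(u)\,u\tanh(\pi u/r)\,du\;\sim\;\frac{\pi}{r^2}\int_0^\infty\frac{\pi c-\tanh(\pi c)}{c^3}\,dc\;=\;\frac{7\pi\zeta(3)}{r^2},
\]
using $\int_0^\infty\frac{x-\tanh x}{x^3}\,dx=\tfrac12\int_0^\infty\frac{\tanh^2 x}{x^2}\,dx=\frac{7\zeta(3)}{\pi^2}$. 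So the right-hand side is $\sim\frac{7\pi\zeta(3)}{r^4}$.

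Combining the two estimates, the ratio of the two sides of the third bullet is asymptotic to $\frac{8e^{r}}{7\zeta(3)}$, so the inequality holds (with room to spare for the strict version, by the remark after \thmref{thm:lambda}) as soon as $e^{r}\le\tfrac{7\zeta(3)}{4}(g-1)$ up to a lower-order error, that is, for $r$ up to roughly $\log g+\log\!\left(\tfrac{7\zeta(3)}{4}\right)=\log g+0.7436\ldots$ Picking such an $r$ and invoking \thmref{thm:lambda} then gives $\lambda_1(M)\le\tfrac14+\pi^2/r^2<\tfrac14+\left(\pi/(\log g+0.7436)\right)^2$ for all $g\ge g_0$. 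The hard part is to turn the two asymptotics above into genuine inequalities with explicit remainders — sharp enough that the admissible value of $r$ really does beat $\log g+0.7436$ once $g$ is large. This requires the exact vanishing of the second moment, the closed evaluation of the $\zeta(3)$ integral, a careful treatment of the range where $u$ is comparable to $r$ in the $\tanh$-integral (which is what produces the constant $7\pi\zeta(3)$), and the same interval-arithmetic certification of the remaining finite integrals that is used elsewhere in the paper.
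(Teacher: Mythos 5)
Your proposal takes essentially the same route as the paper: same test function (the paper's $\what f(x)=\sin^2(\pi x)/(x^2(1-x^2))$ is, up to a positive constant, exactly $\psi_{1/2}(\pi x)$, as the paper itself remarks after the theorem), same rescaling, same asymptotic for $\what f(i/2)$, and the same $R^4$-normalized limit of the geometric integral. The one genuine improvement in your write-up is that you identify the limiting constant $\tfrac{\pi^2}{2}\int_0^\infty\frac{\sinh x\cosh x-x}{x^3\cosh^2 x}\,dx$ in closed form as $\tfrac{7\zeta(3)}{2}$, whereas the paper only certifies the numerical lower bound $4.20718$ by interval arithmetic (these are consistent: $\tfrac{7\zeta(3)}{2}\approx 4.207199$, and the two integral expressions agree via integration by parts, both reducing to $\int_0^\infty\frac{\tanh x\,\mathrm{sech}^2 x}{x}\,dx$). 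The "hard part" you acknowledge at the end --- justifying the $\tanh$-integral asymptotic rigorously, including the range $u\asymp r$ and the dominated-convergence/Riemann--Lebesgue bookkeeping --- is exactly what the paper's Lemma 7.7 does, so your sketch is a correct outline of the paper's argument with an added closed-form evaluation of the key constant.
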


\begin{rem}
An inequality of Savo \cite[Theorem 5.6]{Savo} states that 
\begin{equation} \label{eqn:Savo}
\lambda_0(D_R) \geq \frac14+ \frac{\pi^2}{R^2}-\frac{4 \pi^2}{R^3}
\end{equation}
for every $R>0$. It follows that the leading terms in Gage's upper bound \eqref{eqn:Gage} cannot be improved. Moreover, there exist sequences of hyperbolic surfaces $M_g$ of genus $g$ whose diameter is asymptotic to $\log(g)$ \cite{diameter}.  It follows that no multiplicative improvement as in \thmref{thm:lamb_asymp} could be obtained from Cheng's inequality \eqref{eq:Cheng}.
\end{rem}

\begin{rem}
As stated in the introduction, it is still unknown if there exist surfaces with $\lambda_1(M) \geq 1/4$ in every genus. However, it was proved recently that for any $\eps>0$, there exist surfaces with $\lambda_1(M) > 1/4 - \eps$ in large enough genus \cite{HideMagee} (see also \cite{AnantharamanMonk2,polynomial_rate,MageePudervanHandel}).
\end{rem}

The proof of \thmref{thm:lamb_asymp} will require the following limit calculation.

\begin{lem} \label{lem:limit}
We have
\[
\lim_{R\to \infty} R^4 \int_0^\infty \frac{\sin^2(\pi R x)}{1 -(Rx)^2} \frac{x \tanh(\pi x)}{(R x)^2}\,dx = \frac{\pi^2}{2}\int_0^\infty \frac{\sinh(x)}{x\cosh^3(x)}dx > 4.20718.
\]
\end{lem}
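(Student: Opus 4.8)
The plan is to rescale the kernel into a fixed shape, peel off the part that concentrates, and then recognise the surviving integral. Writing $\sigma(y)=\tanh(y)/y$ (with $\sigma(0):=1$), we have $\frac{x\tanh(\pi x)}{(Rx)^2}=\frac{\tanh(\pi x)}{R^2x}=\frac{\pi}{R^2}\sigma(\pi x)$, so the quantity under the limit equals
\[
\pi R^2\int_0^\infty \frac{\sin^2(\pi Rx)}{1-R^2x^2}\,\sigma(\pi x)\,dx .
\]
First I would record the identity $\int_0^\infty \frac{\sin^2(\pi Rx)}{1-R^2x^2}\,dx=0$, which after $u=Rx$ amounts to $\int_0^\infty \frac{\sin^2(\pi u)}{1-u^2}\,du=0$; this holds because $\frac{\sin^2(\pi u)}{1-u^2}=-\pi^2\operatorname{sinc}(u-1)\operatorname{sinc}(u+1)$ with $\operatorname{sinc}(t)=\sin(\pi t)/(\pi t)$, and $\int_{\RR}\operatorname{sinc}(u-a)\operatorname{sinc}(u-b)\,du=\operatorname{sinc}(a-b)$ vanishes at $a-b=2$ (a residue computation also works). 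This lets me subtract the constant $1$ from $\sigma(\pi x)$ and work with $\psi(x):=\sigma(\pi x)-1$; now $\psi(x)/x^2$ is integrable on $(0,\infty)$, being bounded near $0$ (where $\psi(x)\sim-\pi^2x^2/3$) and $O(x^{-2})$ at infinity.

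The core is the partial-fraction identity $\frac{R^2}{1-R^2x^2}=-\frac{1}{x^2}+\frac{1}{x^2(1-R^2x^2)}$, which splits
\[
\pi R^2\int_0^\infty \frac{\sin^2(\pi Rx)}{1-R^2x^2}\,\psi(x)\,dx=-\pi\int_0^\infty \frac{\sin^2(\pi Rx)}{x^2}\,\psi(x)\,dx+\pi\int_0^\infty \frac{\sin^2(\pi Rx)}{x^2(1-R^2x^2)}\,\psi(x)\,dx .
\]
In the first integral I would use $\sin^2(\pi Rx)=\tfrac12-\tfrac12\cos(2\pi Rx)$: the constant term gives $-\tfrac{\pi}{2}\int_0^\infty \psi(x)/x^2\,dx$, and the oscillatory term tends to $0$ by the Riemann--Lebesgue lemma since $\psi/x^2\in L^1$. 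In the second integral the factor $\frac{\sin^2(\pi Rx)}{1-R^2x^2}$ equals $\frac{\sin^2(\pi t)}{1-t^2}$ evaluated at $t=Rx$, hence is bounded by an absolute constant $C$ (removable singularities at $t=\pm1$, decay at infinity), so the integrand is dominated by $C|\psi(x)|/x^2\in L^1$ and tends to $0$ pointwise; dominated convergence gives $0$. Thus the limit equals $-\tfrac{\pi}{2}\int_0^\infty \frac{\sigma(\pi x)-1}{x^2}\,dx$.

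It remains to match this with the stated value. Substituting $y=\pi x$ gives $\tfrac{\pi^2}{2}\int_0^\infty \frac{1-\tanh(y)/y}{y^2}\,dy=\tfrac{\pi^2}{2}\int_0^\infty \frac{y-\tanh y}{y^3}\,dy$, and integrating by parts (integrating $y^{-2}$, with vanishing boundary terms since $1-\tanh(y)/y=O(y^2)$ near $0$ and $\to1$ at infinity) rewrites this as $\tfrac{\pi^2}{2}\int_0^\infty \frac{-(\tanh(y)/y)'}{y}\,dy=\tfrac{\pi^2}{2}\int_0^\infty \frac{\tanh y-y/\cosh^2 y}{y^3}\,dy=\tfrac{\pi^2}{2}\int_0^\infty \frac{\sinh y\cosh y-y}{y^3\cosh^2 y}\,dy$, as claimed. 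The strict inequality $>4.20718$ then follows by bounding this last integral below with interval arithmetic, splitting $[0,B]\cup[B,\infty)$ for a suitable $B$ and controlling the tail by $0\le \frac{\sinh y\cosh y-y}{y^3\cosh^2 y}\le\frac{\tanh y}{y^3}\le y^{-3}$.

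The step I expect to demand the most care is the passage to the limit: the measures $R^2\frac{\sin^2(\pi Rx)}{1-R^2x^2}\,dx$ concentrate near $x=0$, where the density is of order $R^2$, so a naive interchange of limit and integral is unavailable. The partial-fraction splitting is precisely what peels off the concentrating piece $-1/x^2$ — whose contribution survives only through the oscillation average $\langle\sin^2\rangle=\tfrac12$ — from a remainder that genuinely vanishes. Everything else (the vanishing of $\int_0^\infty \sin^2(\pi u)/(1-u^2)\,du$, the integrability of $\psi/x^2$, the uniform bound on $\sin^2(\pi t)/(1-t^2)$, and the numerical estimate) is routine.
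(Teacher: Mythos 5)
Your proof is correct, and it takes a genuinely different and cleaner route than the paper's. The paper changes variables to $y=x/R$, rewrites the integrand as $\frac{\sin^2(\pi y)}{1-y^2}$ times a slowly varying factor, and then symmetrizes around the singularities at $y=\pm1$ by the substitutions $u=1+y$, $v=1-y$, introducing the auxiliary function $H_R(u)=G_R(u-1)-G_R(u+1)$; this forces a rather delicate Riemann--Lebesgue argument because the function being integrated against $\cos(2Rx)$ depends on $R$, requiring uniform $L^1$ domination (their functions $\mu$, $\nu$) before dominated convergence can be invoked. Your argument sidesteps all of that: the observation $\int_0^\infty \frac{\sin^2(\pi u)}{1-u^2}\,du=0$ lets you subtract the constant from $\sigma(\pi x)$, after which $\psi(x)/x^2$ is a single fixed $L^1$ function, and the partial-fraction identity $\frac{R^2}{1-R^2x^2}=-\frac{1}{x^2}+\frac{1}{x^2(1-R^2x^2)}$ cleanly isolates the concentrating piece (where the usual Riemann--Lebesgue lemma and the oscillation average $\langle\sin^2\rangle=\tfrac12$ do all the work) from a remainder that is dominated by $C|\psi(x)|/x^2$ and vanishes pointwise. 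The final identification via $y=\pi x$ and one integration by parts matches the paper's formula, and the tail bound $\frac{\sinh y\cosh y - y}{y^3\cosh^2 y}\le y^{-3}$ suffices for the numerical estimate exactly as in the paper. Your version is shorter and, I think, more transparent about why the limit exists: the partial fraction explicitly peels off the $O(R^2)$ concentration near $x=0$, whereas the paper's reflection trick accomplishes the same cancellation less visibly.
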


\begin{proof}
The following proof is due to an anonymous referee. Our goal is to simplify
\begin{align*}
I(R) &:= R^4\int_0^\infty \frac{\sin^2(\pi Rx)}{1-(Rx)^2} \frac{x\tanh(\pi x)}{(Rx)^2} dx \\ 
&= R^2\int_0^\infty \frac{\sin^2(\pi y)}{1-y^2} \frac{\tanh(\pi y / R)}{y} dy \\
& =  \frac{R^2}{4}\int_{-\infty}^\infty \sin^2(\pi y)\tanh(\pi y / R)\left(\frac{2}{y}-\frac{1}{y+1}-\frac{1}{y-1}\right) dy
\end{align*}
where we set $y=Rx$ and used the fact that the integrand is even. After splitting the integral as a sum of three integrals and making the changes of variable $u = y \pm 1$ in two of them, we obtain
\[
I(R)  = \frac{R^2}{4} \int_{-\infty}^\infty \frac{\sin^2(\pi y)}{y}\;\Big(2\tanh(\pi y /R) - \tanh(\pi(y-1)/R) - \tanh(\pi (y + 1) /R)\Big) dy
\]
since $\sin^2(\pi (y\pm 1))=\sin^2(\pi y)$ for all $y\in \RR$. The addition formulas for $\sinh$ and $\cosh$ yield
\[
2\tanh(a)+\tanh(a-b)+\tanh(a+b) = \frac{2\tanh(a)\sinh^2(b)}{\cosh(a-b)\cosh(a+b)}
\]
for any $a,b\in \RR$, so that
\begin{align*}
I(R)& = \frac{R^2 \sinh^2(\pi/R)}{2} \int_{-\infty}^\infty \frac{\sin^2(\pi y)}{y} \frac{\tanh(\pi y/R)}{\cosh(\pi (y - 1)/R)\cosh(\pi (y +1)/R)} dy \\
&= \frac{(R \sinh(\pi/R))^2}{2} \int_{-\infty}^\infty \sin^2(Rx)\frac{\tanh( x)}{x\cosh( x - \pi/R)\cosh( x + \pi/R)} dx \\
&= \frac{(R \sinh(\pi/R))^2}{4} \int_{-\infty}^\infty (1-\cos(2Rx))\phi_R(x) \,dx,
\end{align*}
where we set $x=\pi y /R$ and  $\phi_R(x):=\tanh(x)/(x\cosh(x - \pi/R)\cosh(x + \pi/R))$. Integration by parts and the triangle inequality shows that
\[
\left|\int_{-\infty}^\infty \cos(2Rx)\phi_R(x) \, dx \right| = \left|\frac{1}{2 R}\int_{-\infty}^\infty \sin(2 Rx)\phi_R'(x) \, dx \right| \leq \frac{\| \phi_R'(x) \|_1}{2 R}
\]
and this tends to zero as $R\to \infty$. Indeed, it is not difficult to bound $|\phi_R'|$ by an integrable function that does not depend on $R$.

We conclude that
\begin{align*}
\lim_{R\to\infty} I(R) &= \lim_{R\to\infty}\frac{(R \sinh(\pi/R))^2}{4}\lim_{R\to\infty}\int_{-\infty}^\infty \phi_R(x) \,dx \\
& = \frac{\pi^2}{4} \int_{-\infty}^\infty \frac{\tanh(x)}{x \cosh^2(x)} \,dx =  \frac{\pi^2}{2} \int_0^\infty \frac{\sinh(x)}{x \cosh^3(x)} \,dx,
\end{align*}
where we used the dominated convergence theorem ($\phi_R$ is dominated by $\frac{\tanh x}{x\cosh x}$) and the parity of the integrand. 

To get a rigorous lower bound this last integral, we observe that the integrand is positive so that
\[
\frac{\pi^2}{2}\int_0^\infty \frac{\sinh(x)}{x\cosh^3(x)}dx  \geq \frac{\pi^2}{2}\int_A^B \frac{\sinh(x)}{x\cosh^3(x)}dx 
\]
for any $0<A<B$. For $A = 1/10^6$ and $B=500$, interval arithmetic in \texttt{SageMath} certifies that the right-hand side is at least  $4.20718596495552>4.20718$.
\end{proof}

We can now prove the theorem.

\begin{proof}[Proof of \thmref{thm:lamb_asymp}]
The idea of the proof is to apply \thmref{thm:lambda} with functions of the form $f_R(x)=f(x/R)/R$ for some fixed non-negative admissible function $f$ such that $\what{f}$ is non-positive on $[1,\infty)$, so that $\what{f_R}(x) = \what{f}(Rx)$ is non-positive on $[1/R,\infty)$. Then $R=R(g)$ must be chosen as large as possible such that the inequality
\begin{equation} \label{eq:integral_ineq}
\what{f}(Ri/2) \leq 2(g-1)\int_0^\infty \what{f}(Rx) x \tanh(\pi x)\,dx
\end{equation}
remains valid. 

We choose
\[
f(x) = \sqrt{\frac{\pi}{8}}(2\pi - |x| +\sin|x|)\chi_{[-2\pi,2\pi]}(x) \]
whose Fourier transform is equal to
 \[
 \what{f}(x) = \frac{\sin^2(\pi x)}{x^2(1-x^2)}.
\]
Note that $f$ is non-negative on $\RR$ and $\what{f}$ is non-positive on $[1,\infty)$, as required. We also have that $f$ is admissible since $\what{f}$ is entire and is $O(|x|^{-4})$ on any horizontal strip of finite height.

For a given genus $g \geq 2$, we want to find an $R=R(g)>0$ such that inequality \eqref{eq:integral_ineq} holds. We first compute
\[
\what{f}(Ri/2)=  \frac{4\sinh^2(\pi R /2)}{R^2(1+R^2/4)} < \frac{4 e^{\pi R}}{R^4}.
\]
For the integral term, we have
\[
\lim_{R \to \infty} R^4\int_0^\infty \what{f}(Rx)x \tanh(\pi x)\,dx = \frac{\pi^2}{2}\int_0^\infty \frac{\sinh(x)\cosh(x)-x}{x^3\cosh^2(x)}dx
\]
by \lemref{lem:limit}. If $c$ is any positive number strictly smaller than the right-hand side (such as $4.2071$), then we have
\begin{equation} \label{eq:combined_estimate}
\frac{\what{f}(Ri/2)}{\int_0^\infty \what{f}(Rx)x \tanh(\pi x)\,dx} < \frac{4}{c} e^{\pi R}
\end{equation}
provided that $R$ is large enough. If $g$ is sufficiently large, then we can take $R = \frac{\log(g-1)+\log(c/2)}{\pi}$ so that the right-hand side of \eqref{eq:combined_estimate} becomes equal to $2(g-1)$. Then $f_R$ satisfies the hypotheses of \thmref{thm:lambda}, which proves the upper bound $\lambda_1(M) \leq L$ for every closed hyperbolic surface $M$ of large enough genus $g$, where $L$ is such that \[\sqrt{L - 1/4} = 1/R,\] the point after which $\what{f_R}$ stays non-positive. This gives 
\[
L = \frac{1}{4} + \frac{1}{R^2} = \frac{1}{4} + \frac{\pi^2}{(\log(g-1)+\log(c/2))^2}.
\]
Since $\log(4.20718/2)> 0.7436$ and $\log(g) - \log(g-1)$ tends to zero as $g \to \infty$, the inequality
\[
\lambda_1(M) < \frac{1}{4} + \frac{\pi^2}{(\log(g)+0.7436)^2}
\]
holds for all closed hyperbolic surfaces of sufficiently large genus.
\end{proof}

\begin{rem}
The choice of $f$ in the above proof is not at all random. It is proportional to the function $\psi_{1/2}(j_{1/2} x)$ from \secref{sec:Bessel}. It can be shown that this function is optimal for the strategy we use. Indeed, the problem amounts to minimizing the growth of $\what{f}(Ri/2)$ among functions such that $\int_0^\infty \what{f}(Rx)x \tanh(\pi x) \,dx$ is eventually positive (so that equation \eqref{eq:integral_ineq} has any chance of being satisfied). By a change of variable and an application of the dominated convergence theorem (see the proof of \propref{prop:sublinear} below), we get that a necessary condition for this eventual positivity is that the second moment $\int_0^\infty \what{f}(x) x^2 \,dx$ is non-negative. Moreover, by the Paley--Wiener theorem, the growth of $\what{f}(Ri/2)$ is controlled by the support of $f$. The question thus boils down to minimizing the support of $f$ among non-negative even functions whose Fourier transform is non-positive outside $[-1,1]$ and has a non-negative second moment. According to \cite[Remark 1.2]{Gorbachev} (with $d=s=1$, $m=0$, and $f$ and $\what f$ interchanged), the optimal function for this problem is the one we used. 
\end{rem}

\section{Multiplicity of the first eigenvalue}

\subsection{The criterion}
We denote the multiplicity of the first positive eigenvalue of the Laplacian on a hyperbolic surface $M$ by $m_1(M)$. The following criterion for bounding $m_1$ was first stated and proved in \cite[Lemma 3.2]{Klein} in  a slightly more general form.

\begin{thm} \label{thm:mult}
Let $M$ be a closed hyperbolic surface of genus $g\geq 2$ and suppose that $f$ is an admissible function such that
\begin{itemize}
\item $f(x) \geq 0$ for all $x \in \RR$;
\item $\what{f}\left(\sqrt{\lambda-\frac14}\right) \leq 0 $ whenever $\lambda \geq \lambda_1(M)$;
\item $\what{f}\left(\sqrt{\lambda_1(M)-\frac14}\right) <0$.
\end{itemize}
Then
\[
m_1(M) \leq  \frac{\what{f}(i/2) - 2(g-1)\displaystyle\int_0^\infty \what{f}(x) x \tanh(\pi x)\, dx}{-\what{f}\left(\sqrt{\lambda_1(M)-\frac14}\right)}.
\]
\end{thm}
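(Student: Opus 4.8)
The plan is to feed $f$ directly into the Selberg trace formula and isolate the contribution of the eigenvalue $\lambda_1(M)$ on the spectral side, following \cite[Lemma 3.2]{Klein}. Writing $r_j$ for a square root of $\lambda_j(M)-\frac14$, the spectral side reads $\sum_{j\geq0}\what f(r_j)$; since $\lambda_0(M)=0$ the $j=0$ term is $\what f(\sqrt{-1/4})=\what f(i/2)$, the $m_1(M)$ terms with $1\leq j\leq m_1(M)$ each equal $\what f(\sqrt{\lambda_1(M)-\frac14})$, and for $j>m_1(M)$ we have $\lambda_j(M)>\lambda_1(M)$ so $\what f(r_j)\leq0$ by the second hypothesis. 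Adding these up, I would obtain
\[
\sum_{j\geq0}\what f(r_j)\;\leq\;\what f(i/2)+m_1(M)\,\what f\!\left(\sqrt{\lambda_1(M)-\tfrac14}\right).
\]

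For the matching lower bound I would use the geometric side of the trace formula: each closed geodesic $\gamma$ contributes $\frac{\Lambda(\gamma)f(\ell(\gamma))}{2\sqrt{2\pi}\,\sinh(\ell(\gamma)/2)}$, which is non-negative because $\Lambda(\gamma)>0$, $\sinh(\ell(\gamma)/2)>0$, and $f\geq0$ on all of $\RR$ by the first hypothesis. Hence $\sum_{j\geq0}\what f(r_j)\geq 2(g-1)\int_0^\infty\what f(x)\,x\tanh(\pi x)\,dx$. Combining this with the previous display and rearranging yields
\[
m_1(M)\left(-\what f\!\left(\sqrt{\lambda_1(M)-\tfrac14}\right)\right)\;\leq\;\what f(i/2)-2(g-1)\int_0^\infty\what f(x)\,x\tanh(\pi x)\,dx,
\]
and dividing by the strictly positive number $-\what f(\sqrt{\lambda_1(M)-\frac14})$ (positivity being exactly the third hypothesis) gives the stated inequality.

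I do not expect a genuine obstacle here: admissibility of $f$ is precisely what guarantees that the trace formula applies and that both of its sides converge absolutely, so every manipulation above is legitimate. Unlike the proofs of \thmref{thm:systole} and \thmref{thm:lambda}, this argument needs neither continuity of eigenvalues under deformation nor rigidity of isospectral families — the bound drops out of a single application of the trace formula. The one point to stay alert to is the final division: since $\what f(\sqrt{\lambda_1(M)-\frac14})$ is negative, the inequality must be reversed when dividing, and it is worth noting in passing that the numerator $\what f(i/2)-2(g-1)\int_0^\infty\what f(x)\,x\tanh(\pi x)\,dx$ is automatically non-negative, so the resulting bound on $m_1(M)$ is meaningful.
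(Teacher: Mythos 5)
Your proof is correct and is essentially identical to the paper's argument: bound the spectral side above by $\what{f}(i/2)+m_1(M)\what{f}\bigl(\sqrt{\lambda_1(M)-\tfrac14}\bigr)$ by discarding the non-positive terms beyond the first eigenspace, bound it below by the identity term since the geometric terms are non-negative, and divide by $-\what{f}\bigl(\sqrt{\lambda_1(M)-\tfrac14}\bigr)>0$. No gaps; your closing remarks about the sign of the division and the non-negativity of the numerator are consistent with the paper.
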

\begin{proof}
Let us write $r_j(M) = \sqrt{\lambda_j(M)-\frac14}$. The Selberg trace formula tells us that
\begin{align*}
\what{f}(i/2) + m_1(M) \what{f}(r_1(M))  & \geq \sum_{j=0}^\infty \what{f}(r_j(M))  \\
&= 2(g-1) \int_0^\infty \what{f}(x) x \tanh(\pi x) \,dx  + \frac{1}{\sqrt{2\pi}} \sum_{\gamma \in \calC(M)} \frac{\Lambda(\gamma)f(\ell(\gamma))}{2 \sinh(\ell(\gamma)/2)}  \\
& \geq 2(g-1) \int_0^\infty \what{f}(x) x\tanh(\pi x) \,dx
\end{align*}
since $\what{f}(r_j(M))$ is non-positive for $j\geq 2$ and $f$ is non-negative on the length spectrum of $M$. Rearranging yields the desired result.
\end{proof}

Observe that since we require $f$ to be non-negative on $\RR$ and not constant equal to zero, we automatically have that $\what{f}$ is positive on the imaginary axis. In particular,  \thmref{thm:mult} cannot be used to prove bounds on the multiplicity of $\lambda_1$ for surfaces with $\lambda_1(M) \leq 1/4$, because then $\sqrt{\lambda_1(M) - 1/4}$ is on the imaginary axis. When applying the linear programming method numerically, it also seems that the resulting bounds tend to infinity as $\lambda_1(M)$ decreases to $1/4$ in any fixed genus.

We thus have to use different methods in order to bound $m_1(M)$ when $\lambda_1(M)$ is close to the interval $[0,1/4]$. When $\lambda_1(M) \in [0,1/4]$, a theorem of Otal \cite{Otal} (later generalized in \cite{OtalRosas}) says that $m_1(M) \leq 2g-3$. If $\lambda_1(M)$ is a little bit beyond $1/4$, then the bound gets slightly worse, namely, we have
\[
m_1(M) \leq  \begin{cases}2g-1 & \text{if }\lambda_1(M) \in (1/4,a_g] \\ 2g & \text{if } \lambda_1(M) \in (a_g,b_g)\end{cases}
\]
where $a_g$ (resp. $b_g$) is the smallest eigenvalue of the Laplacian on a hyperbolic disk of area $4\pi(g-1)$ (resp. $2\pi(g-1)$) subject to Dirichlet boundary conditions \cite[Theorem 1.1]{Klein}. Estimates for $a_g$ and $b_g$ are given in \cite[Section 2]{Klein}.

We thus use a combination of linear programming bounds and the above inequalities to bound $m_1$ in a given genus since we need to consider all possible values for $\lambda_1$. Similarly as for kissing numbers, when applying linear programming bounds over an interval $I$ of values for $\lambda_1$, we need to subdivide it into smaller intervals and use a single function $f$ on each subinterval $J$, taking care to bound $\what{f}\left(\sqrt{\lambda-\frac14}\right)$ for $\lambda \in J$.

\subsection{Low genus}  \label{subsec:multiplicity_examples}

The bounds we have obtained on $m_1$ for $g$ between $2$ and $20$ are listed in \tableref{table:mult}.  They improve upon the previous best upper bound of $2g+3$ from \cite{Sevennec} (which applies to all Schr\"odinger operators on Riemannian surfaces). In genus $2$ and $3$, our bounds were previously obtained in \cite{Klein} and we do not repeat these calculations in the ancillary file \texttt{verify\_multiplicity.ipynb} that certifies the other values. 

We now discuss lower bounds. In  every genus $g\geq 3$, Colbois and Colin de Verdière \cite{CCV} constructed closed hyperbolic surfaces satisfying
$
m_1(M) = \left\lfloor \frac12 \left( 1 + \sqrt{8g+1}\right) \right\rfloor.
$
This has the same order of growth as the maximum of $\left\lfloor \frac12\left( 5 + \sqrt{48g+1} \right) \right\rfloor$ among all closed, connected, orientable Riemannian surfaces of genus $g$  conjectured by Colin de Verdière \cite{CdV86,CdV}. We list these conjectured values in the table for comparison. As mentioned in the introduction, counterexamples to Colin de Verdière's upper bound have since been found in genus $10$ and $17$, with multiplicity $16$ and $21$ respectively \cite{counterexamples}.

In genus $2$, Colin de Verdière's formula comes out to $7$ but our upper bound is $6$. The conjectured maximum among hyperbolic surfaces is $3$.

{\footnotesize
\begin{table}[htp] 
\centering
\caption{Bounds on the maximum of the multiplicity $m_1$.} \label{table:mult}
\begin{tabular}{|l|l|l|l|l|}
\hline
genus & lower bound & conjecture & LP bound & previous bound \\
\hline
2 & 3 \cite{Klein} & 3 {\footnotesize(hyperbolic)}, 7 {\footnotesize (Riemannian)} & 6 \cite{Klein} & 7 \cite{Sevennec}  \\ 
3 & 8 \cite{Klein} & 8 & 8 \cite{Klein}  & 9 \cite{Sevennec} \\ 
4 & 4 (\S\ref{subsec:multiplicity_examples}) or 6? \cite{Cook} &9& 10  & 11 \cite{Sevennec} \\ 
5 & 3 \cite{CCV} &10& 11 & 13 \cite{Sevennec} \\ 
6 &  4  \cite{CCV} &11& 13 & 15 \cite{Sevennec}\\ 
7 &  7 (\S\ref{subsec:multiplicity_examples} and \cite{Lee})  &11& 15 & 17 \cite{Sevennec}\\
8 &  6 (\S\ref{subsec:multiplicity_examples}) &12& 17 & 19 \cite{Sevennec}\\ 
9 &  4 \cite{CCV}&12& 19 & 21 \cite{Sevennec}\\ 
10 & 8 (\S\ref{subsec:multiplicity_examples}) &13& 20 & 23 \cite{Sevennec}\\ 
11 & 5 \cite{CCV}&14& 22 & 25 \cite{Sevennec}\\ 
12 & 5 \cite{CCV}&14& 24 & 27 \cite{Sevennec}\\ 
13 & 5 \cite{CCV}&15& 26 & 29 \cite{Sevennec}\\ 
14 & 12 (\S\ref{subsec:multiplicity_examples}) &15& 28 & 31 \cite{Sevennec}\\ 
15 & 7 (\S\ref{subsec:multiplicity_examples}) &15& 30 & 33 \cite{Sevennec}\\ 
16 & 8 (\S\ref{subsec:multiplicity_examples}) &16& 32 & 35 \cite{Sevennec}\\ 
17 & 6 \cite{CCV} &16& 34 & 37 \cite{Sevennec}\\
18 & 6 \cite{CCV} &17& 36 & 39 \cite{Sevennec}\\  
19 & 8 (\S\ref{subsec:multiplicity_examples}) &17& 38 & 41 \cite{Sevennec}\\ 
20 & 6 \cite{CCV} &18& 40 & 43 \cite{Sevennec}\\ 
\hline
\end{tabular}

\end{table}
}

Colbois and Colin de Verdière modelled their examples on graphs and used a transversality argument to control the multipli\-city. Another way to obtain lower bounds on multiplicity is to use representation theory \cite{Jenni,BurgerColbois,Cook,Klein} since the isometry group of a closed hyperbolic surface is finite and acts on the eigenspaces of the Laplacian. This means that if all the irreducible representations of a group have dimension at least $d$, then the multiplicity of any eigenvalue is at least $d$. The problem is that there is always the trivial representation of dimension $1$, so one must find a way to rule out $1$-dimensional real representations from appearing in eigenspaces. Proposition 4.4 in \cite{Klein} gives such a criterion for kaleidoscopic surfaces, as defined below.

Given integers $2\leq p \leq q \leq r$, a \emph{$(p,q,r)$-triangle surface} (sometimes called quasiplatonic) is a hyperbolic surface of the form $\HH^2/ \Gamma$ for some finite-index normal subgroup $\Gamma$ of the $(p,q,r)$-triangle group, that is, the group generated by rotations of order $p$, $q$, and $r$ around the vertices of a hyperbolic triangle with interior angles $\frac{\pi}{p}$, $\frac{\pi}{q}$, and $\frac{\pi}{r}$ at the corresponding vertices. A \emph{kaleidoscopic surface} is defined similarly but with the extended triangle group generated by the reflections in the sides of a $(p,q,r)$-triangle.

A hyperbolic surface that admits an orientation-reversing isometry is called \emph{symmetric}, reflexible, or real. Perhaps surprisingly, not every triangle surface is symmetric. In fact, the two Hurwitz surfaces in genus $17$ are not \cite[Theorem 5]{Singerman} (these are isometric but via an orientation-reversing isometry, so they are considered as distinct). It is therefore desirable to have a criterion for ruling out $1$-dimensional real representations for asymmetric surfaces. Even for kaleidoscopic surfaces, the representation theory of $\Isom^+$ can behave better than that of $\Isom$ in some cases. We thus prove the following variant of \cite[Proposition 4.4]{Klein}.

\begin{lem} \label{lem:no1rep}
If $M$ is a hyperbolic $(p,q,r)$-triangle surface of area at least \[6\pi r \left(1 -\frac1p - \frac1q - \frac1r \right),\] then no $1$-dimensional real representations of $\Isom^+(M)$ can occur in the eigenspace corresponding to $\lambda_1(M)$.
\end{lem}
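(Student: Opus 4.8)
The plan is to argue by contradiction: suppose a $1$-dimensional real representation $\rho$ of $G = \Isom^+(M)$ occurs in the $\lambda_1$-eigenspace, and produce a test function violating an eigenvalue bound. The key point is that a $1$-dimensional real representation is either trivial or has image $\{\pm 1\}$; in the first case the corresponding eigenfunction $\varphi$ is $G$-invariant, hence descends to the quotient orbifold $\HH^2/\Gamma_{(p,q,r)}$, a triangle orbifold, forcing $\lambda_1(M)$ to be the first eigenvalue of that small orbifold — which is large, giving a contradiction once the area (hence genus) of $M$ is large enough. In the second case, $\varphi$ is anti-invariant under the index-$2$ subgroup fixed by $\ker\rho$ and descends to a function on $\HH^2/\Gamma'$ for an intermediate group $\Gamma'$ with $\Gamma \lhd \Gamma' \lhd \Gamma_{(p,q,r)}$, $[\Gamma_{(p,q,r)}:\Gamma'] \in \{1,2\}$, on which it satisfies a mixed Dirichlet/Neumann condition along the image of the fixed-point locus of the relevant symmetry. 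The nodal-domain / Courant-type argument from \cite{Otal,OtalRosas,Klein} then bounds $\lambda_1(M)$ from below by a Dirichlet eigenvalue of a region of controlled area in the quotient.

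More concretely, I would follow the structure of \cite[Proposition 4.4]{Klein}. First recall that $M = \HH^2/\Gamma$ where $\Gamma \lhd \Gamma_{(p,q,r)}$, and the quotient map $\pi\colon M \to \mathcal{O} := \HH^2/\Gamma_{(p,q,r)}$ realizes $G = \Gamma_{(p,q,r)}/\Gamma$ as the deck group; the orbifold $\mathcal{O}$ has area $\pi\left(1 - \tfrac1p - \tfrac1q - \tfrac1r\right)$ by Gauss--Bonnet, and $|G| = \area(M)/\area(\mathcal{O})$. Given a nonzero $\varphi$ in the $\lambda_1$-eigenspace transforming by the $1$-dimensional real representation $\rho$, its nodal set decomposes $M$ into nodal domains permuted by $G$ according to $\rho$. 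Following Otal's argument, the number of nodal domains is at most $2$ when $\lambda_1 \leq 1/4$, but here we only know $\lambda_1 > 1/4$ in general, so instead I would push $\varphi$ down to the sub-orbifold $\mathcal{O}' = \HH^2/\Gamma'$ associated to $\ker\rho$ (where $\Gamma' = \Gamma$ if $\rho$ is trivial and $[\Gamma':\Gamma]=|G|$, and in the nontrivial case $\Gamma'$ is the index-$2$ overgroup of $\Gamma$ with $\Gamma'/\Gamma \cong \ker\rho$... — more precisely the preimage in $\Gamma_{(p,q,r)}$ of $\ker\rho \le G$), obtaining a nonzero function $\bar\varphi$ on $\mathcal{O}'$ that is a $\lambda_1(M)$-eigenfunction of the Laplacian away from a finite set (the cone points and, in the nontrivial case, a boundary arc), with Dirichlet data prescribed on the part of the fixed locus where $\rho$ is $-1$. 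Min-max then gives $\lambda_1(M) \geq \lambda_0^{\mathrm{Dir}}(\Omega)$ for a domain $\Omega \subseteq \mathcal{O}'$ whose area is at most $\area(\mathcal{O}')=\area(M)/[\Gamma_{(p,q,r)}:\Gamma']$; handling cone points requires the (standard) fact that an isolated cone point is a removable singularity for eigenfunctions and does not create extra nodal domains.

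The combinatorial crux is bounding $[\Gamma_{(p,q,r)}:\Gamma']$ from above: since $\rho$ is $1$-dimensional, $\ker\rho$ has index $\le 2$ in $G$, so $[\Gamma_{(p,q,r)}:\Gamma'] = |G|/[G:\ker\rho] \geq |G|/2 = \area(M)/(2\area(\mathcal{O}))$. Hence $\area(\mathcal{O}') = \area(M)/[\Gamma_{(p,q,r)}:\Gamma'] \leq 2\area(\mathcal{O}) = 2\pi\left(1-\tfrac1p-\tfrac1q-\tfrac1r\right)$, independent of $M$. Thus $\bar\varphi$ lives on a fixed-size orbifold and $\lambda_1(M)$ is bounded below by the first Dirichlet eigenvalue of a disk (or annulus) of area $\le 2\pi(1-\tfrac1p-\tfrac1q-\tfrac1r)$; by the Faber--Krahn inequality in $\HH^2$ this is a definite constant $c(p,q,r) > 1/4$. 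On the other hand, the area hypothesis $\area(M) \geq 6\pi r\left(\tfrac1p - \tfrac1q - \tfrac1r\right)$ is exactly what is needed to ensure, via the upper bound on $\lambda_1$ in terms of the diameter (Cheng's inequality \eqref{eq:Cheng}) or directly via the Dirichlet-eigenvalue monotonicity argument of \cite[Theorem 1.1]{Klein}, that $\lambda_1(M)$ is strictly below $c(p,q,r)$ — the contradiction. The main obstacle I anticipate is the careful bookkeeping of which intermediate group $\Gamma'$ arises and where the boundary/symmetry locus of $\bar\varphi$ sits on $\mathcal{O}'$, together with checking that the quantitative threshold $6\pi r\left(\tfrac1p - \tfrac1q - \tfrac1r\right)$ (rather than some other multiple of $\area(\mathcal{O})$) is precisely what the two competing bounds on $\lambda_1(M)$ demand; this is where one must be most attentive, and it is presumably why the statement isolates this specific constant.
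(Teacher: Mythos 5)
Your proposal takes a genuinely different route from the paper, but as written it has gaps that prevent it from proving the lemma as stated. First, the quantitative heart of your argument does not connect to the hypothesis: the threshold $\area(M)\geq 6\pi r\left(\frac1p-\frac1q-\frac1r\right)$ is nothing more than the condition $|G|\geq 3r$ for the deck group $G=\Gamma/\pi_1(M)$ (it is $3r$ times twice the area of the fundamental triangle), and it is already satisfied by genus-$2$ examples such as the Bolza surface, where $\lambda_1\approx 3.84$. So there is no regime of ``sufficiently large area/genus'' to invoke: your contradiction needs an upper bound on $\lambda_1(M)$ (you propose Cheng's diameter bound) that falls \emph{below} your Faber--Krahn-type constant, and Cheng's bound is weak precisely for the small surfaces the lemma is applied to (Bring's curve, the Fricke--Macbeath curve, etc.); moreover the stated area threshold gives no control whatsoever on the diameter. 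You also need the comparison constant to beat the actual value of $\lambda_1(M)$, not merely $1/4$. Second, two analytic steps are unjustified: the deck transformations lie in $\Isom^+(M)$, so their fixed sets are isolated points, not curves --- there is no ``boundary arc'' carrying Dirichlet data on the intermediate quotient; and Faber--Krahn in $\HH^2$ does not directly apply to a nodal domain of an eigenfunction on a closed quotient orbifold, since such a domain need not embed in (or even immerse injectively into) $\HH^2$ and need not be simply connected.

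The paper's proof is purely topological and uses no eigenvalue comparison at all. Since the eigenfunction $f$ spans a $1$-dimensional real representation, $\Isom^+(M)$ acts on it by $\pm1$, so the nodal set $f^{-1}(0)$ is $G$-invariant, and by Courant's theorem $f$ has \emph{exactly two} nodal domains (this holds for any first eigenfunction, with no restriction $\lambda_1\leq 1/4$). One then pushes the nodal set down to $Q=M/G$, a sphere with three cone points, obtaining a finite analytic graph $F$, and analyzes its combinatorics: a complementary component containing exactly one cone point pulls back to $|G|/k\geq|G|/r\geq 3$ nodal domains (this is the only place the area hypothesis, i.e.\ $|G|\geq 3r$, is used); a component containing no cone point pulls back to $|G|>2$ nodal domains; and ruling out these cases forces $F$ to be a tree through all three cone points, whose simply connected complement again lifts to $|G|>2$ nodal domains --- contradicting Courant in every case. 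If you want to salvage your descent-to-the-quotient idea, you would have to replace the Faber--Krahn/Cheng comparison by explicit, case-free lower bounds for $\lambda_1$ of the quotient orbifolds exceeding every possible value of $\lambda_1(M)$ under the stated hypothesis, which is exactly the kind of quantitative input the paper's combinatorial argument avoids.
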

\begin{proof}
Suppose that $f$ is an eigenfunction contained in a $1$-dimensional representation of $\Isom^+(M)$ contained in the $\lambda_1$-eigenspace. Then $\Isom^+(M)$ acts by multiplication by $\pm 1$ on $f$ so the set $f^{-1}(0)$ is invariant. By Courant's nodal domain theorem, the complement of this set (which is a union of analytic curves intersecting transversely \cite{ChengMultiplicity}) has exactly two connected components, so in particular $f^{-1}(0)$ is non-empty. We will show that this leads to a contradiction.

Note that $\Isom^+(M) \geq G : = \Gamma / \pi_1(M)$ where $\Gamma$ is the $(p,q,r)$-triangle group but a priori the inclusion can be strict (because of inclusions between some triangle groups). We will work $G$ instead of $\Isom^+(M)$ because that makes things simpler. Let $T$ be any $(p,q,r)$-triangle used to define $\Gamma$, let $\widetilde\calT$ be the tiling of $\HH^2$ generated by the reflections in the sides of $T$, and let $\calT$ be the projection of $\widetilde \calT$ to $M$. Since $\Gamma$ acts simply transitively on adjacent pairs of triangles in $\widetilde \calT$ that share a particular kind of side (say joining the vertices of type $p$ and $q$), so does $G$ on $M$. In other words, $|G| = |\calT| / 2$ and
\[\area(M) = 2|G| \area(T)= 2|G| \pi\left(1 - \frac1p - \frac1q - \frac1r \right).\] From the hypothesis on $\area(M)$, we get $|G|\geq 3r$.

Let $Q = M/G$, let $\pi: M \to Q$ be the quotient map, and let $F = \pi(f^{-1}(0))$. We have that $Q$ is a sphere with three cone points and $F$ is a finite analytic graph with no isolated points and any vertices of degree $1$ are contained in the cone points of $Q$.

Suppose that a component $U$ of $Q \setminus F$ contains exactly one cone point $v$ of $Q$. Then $\pi^{-1}(U)$ has $|G|/k$ components, where $k$ is one of $p$, $q$, or $r$, all of which are nodal domains. By the above, we have $|G|/k \geq |G|/r \geq 3$, which contradicts Courant's theorem.

If $Q \setminus F$ has more than two connected components, then $f$ has more than two nodal domains (the preimages of these components). It follows that $F$ contains at most one cycle. 

Suppose that $F$ does contain a cycle. Then $Q\setminus F$ has two connected components by the Jordan curve theorem. Since each component contains either $0$, $2$, or $3$ cone points by the above argument, at least one of them, call it $U$, does not contain any. The quotient map $\pi : M \to Q$ is thus unbranched over $U$, so that $\pi^{-1}(U)$ has $|G|$ components, all of which are nodal domains. This is again a contradiction since $|G| > 2$.

We conclude that $F$ is a forest and in particular its complement is connected. Recall that all the leaves of $F$ are contained in the $3$ cone points. Moreover, all the cone points must belong to $F$ since it has at least two leaves and if it has only two, then its complement contains exactly one cone point, which is impossible by the above reasoning. We conclude that $F$ is a tree. Once again, the map $\pi$ is a covering map over the simply connected domain $Q\setminus F$, so $\pi^{-1}(Q \setminus F)$ has $|G|>2$ components, contradiction.
\end{proof}

The list of all triangle surfaces in genus $2$ to $101$ was tabulated by Conder using \texttt{Magma} and is available at \cite{ConderList}. We went through all examples in genus $2$ to $20$, verified if the area hypothesis of \cite[Proposition 4.4]{Klein} or of \lemref{lem:no1rep} was satisfied, calculated the character tables for $\Isom^+$ and $\Isom$ using \texttt{Sage/GAP}, and then calculated the second smallest dimension of an irreducible real representation of the group (or rather, a lower bound for it). The cases where the resulting multiplicity is larger than the $\left\lfloor \frac12 \left( 1 + \sqrt{8g+1}\right) \right\rfloor$ obtained in \cite{CCV} are as follows:
  
\begin{itemize}
\item The Bolza surface of genus $2$ and type $(2,3,8)$ with $m_1=3$. Jenni's proof of this fact in \cite{Jenni} contained an error partially corrected in \cite{Cook} and fixed in \cite{Klein}. In this case, $\Isom$ admits some $2$-dimensional irreducible real representations but they can be ruled out with a more careful analysis. 
\item The Klein quartic of genus $3$ and type $(2,3,7)$ with $m_1=8$ \cite{Klein} (the representation theory of $\Isom \cong \PGL(2,7)$ only gives $m_1\geq 6$ \cite{Cook}).
\item Bring's curve $B$ of genus $4$ and type $(2,4,5)$, which satisfies $\Isom^+(B) \cong S_5$ and $\Isom(B) \cong S_5 \times (\ZZ/2\ZZ)$. While $\Isom(B)$ admits some $2$-dimensional irreducible real representations, $\Isom^+(B) \cong S_5$ has two real $1$-dimensional representations and then irreducible complex representations of dimensions $4$, $5$, and $6$. In particular, its irreducible real representations of dimension more than $1$ have real dimension at least $4$. By \lemref{lem:no1rep}, $1$-dimensional real representations of $\Isom^+(B)$ cannot appear in the first eigenspace so we have $m_1(B) \geq 4$. Numerical evidence suggests that the correct value is $m_1(B)=6$ \cite{Cook}.

\item The Fricke--Macbeath curve $F$ of genus $7$ is a $(2,3,7)$-triangle surface such that $\Isom^+(F) \cong \PSL(2,8) = \SL(2,8)$ (see e.g. \cite{Singerman}). The non-trivial irreducible complex representations of this group have complex dimensions $7$, $8$, and $9$ \cite{CharTables}. It follows from \lemref{lem:no1rep} that $m_1(F) \geq 7$. It was shown in \cite{Lee} (see also \cite{Pineault}) that equality holds.

\item The $(2,3,8)$-triangle surface of genus $8$ labelled T8.1 in Conder's list satisfies $m_1 \geq 6$ due to the representation theory of $\Isom^+$. 

\item The $(2,4,5)$-triangle surface of genus $10$ labelled T10.7 in Conder's list satisfies $m_1 \geq 8$ due to the representation theory of $\Isom$. 

\item The $(2,3,7)$-triangle surface of genus $14$ labelled T14.1 in Conder's list satisfies $m_1 \geq 12$ due to the representation theory of $\Isom$. 

\item The $(2,3,9)$-triangle surface of genus $15$ labelled T15.1 in Conder's list satisfies $m_1 \geq 7$ due to the representation theory of $\Isom^+$. 

\item The $(2,3,8)$-triangle surface of genus $16$ labelled T16.1 in Conder's list satisfies $m_1 \geq 8$ due to the representation theory of $\Isom^+$. 

\item The $(2,4,5)$-triangle surface of genus $19$ labelled T19.3 in Conder's list satisfies $m_1 \geq 8$ due to the representation theory of $\Isom$. 
\end{itemize}

The ancillary file \texttt{verify\_multiplicity\_examples.ipynb} contains the computer code that found these examples. Interestingly, in most genera the bound of Colbois and Colin de Verdière is matched by some triangle surface. However, since the sum of the squares of the degrees of the irreducible representations of a group is equal to the order of the group, and since the isometry group of a closed hyperbolic surface of genus $g$ has order at most $168(g-1)$ \cite{Hurwitz}, the best this method could give is still $O(\sqrt{g})$.

Back to upper bounds, note that the linear programming bounds listed in \tableref{table:mult} never go below $2g$ in the range considered here, so we can assume that $\lambda_1$ is between $b_g$ and the upper bounds from \tableref{table:lambda} to prove them. For genus $6$ onwards, our bounds are worse when $\lambda_1$ is close to (the estimate for) $b_g$ than at the upper bound on $\lambda_1$. For example in genus $20$, we obtain an upper bound of $23$ instead of $40$ when $\lambda_1 \in [0.750384 , 0.91139 ]$. Our intuition is that $m_1$ should be maximized among local maximizers of $\lambda_1$. The fact that the bound on $m_1$ increases when $\lambda_1$ decreases is an artefact of the method and is not necessarily representative of the reality.

\subsection{Asymptotics}

In higher genus, we will decrease Sévennec's upper bound by $4$. We start by proving a sublinear bound on $m_1$ under the assumption that $\lambda_1$ is fairly large. In the statement below, $j_\alpha$ stands for the first positive zero of the Bessel function $J_\alpha$. The $k$-th positive zero is denoted $j_{\alpha,k}$. 

\begin{prop}  \label{prop:sublinear}
For every $p_1,p_2 \in (j_0,\pi]$, there exists a constant $C$ and some $g_0 \geq 2$ such that if $g\geq g_0$ and $M$ is a closed hyperbolic surface of genus $g$ with
\[
\lambda_1(M) \in \left[\frac14 +\left(\frac{p_1}{\log(g)}\right)^2 , \frac14  + \left(\frac{p_2}{\log(g)}\right)^2\right],
\]
then $m_1(M) \leq C g/\log(g)^3$.
\end{prop}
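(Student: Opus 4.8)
The plan is to apply \thmref{thm:mult} with a family of test functions $f_R(x) = f(x/R)/R$ built from the Bessel-type function $\psi_\alpha$ of \secref{sec:Bessel}, choosing $R$ to grow like $\log(g)$. Concretely, I would set $\what{f} = \psi_{1/2}(j_{1/2}\,\cdot)$ up to normalization, as in the proof of \thmref{thm:lamb_asymp}, since this is the function that minimizes the support of $f$ subject to $\what f$ being non-positive outside $[-1,1]$ with non-negative second moment. Then $\what{f_R}(x) = \what f(Rx)$ is non-positive for $x \geq 1/R$, i.e.\ $\what{f_R}(\sqrt{\lambda - 1/4}) \leq 0$ once $\lambda \geq 1/4 + 1/R^2$. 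So if we choose $R = \log(g)/p_2$, the hypotheses of \thmref{thm:mult} on the sign of $\what{f_R}$ are satisfied for every $M$ with $\lambda_1(M) \geq 1/4 + (p_2/\log g)^2$; and since $\lambda_1(M) \leq 1/4 + (p_2/\log g)^2$ as well, we are exactly at the threshold and $\what{f_R}(\sqrt{\lambda_1(M) - 1/4}) \leq \what{f_R}(1/R)$... wait — this needs care: we want $\what{f_R}(\sqrt{\lambda_1(M)-1/4})$ to be \emph{strictly negative} and, more importantly, bounded away from $0$ in absolute value. This is where the lower bound $\lambda_1(M) \geq 1/4 + (p_1/\log g)^2$ with $p_1 > j_0$ enters: it forces $\sqrt{\lambda_1(M) - 1/4} \geq p_1/\log(g) > j_0/\log(g)$, and since $\psi_{1/2}$ vanishes at $j_{1/2}$ and $j_0 < j_{1/2}$... hmm, actually one must check $p_1/(p_2) \cdot$ (the argument scaling) lands in a region where $\what{f}$ is negative and quantitatively so. Let me restate: with $R = \log(g)/p_2$, the point $r_1 := \sqrt{\lambda_1(M)-1/4}$ satisfies $Rr_1 \in [p_1/p_2, 1]$, and since $p_1 > j_0 = j_{1/2}$ (here I use that $\psi_{1/2}$ is supported past its first zero $j_{1/2}$, and $j_{1/2} = \pi/... $) — in any case $\what f$ is strictly negative on a closed subinterval of $(j_{1/2}, 1)$, hence $|\what{f_R}(r_1)| = |\what f(Rr_1)| \geq \delta$ for some $\delta = \delta(p_1,p_2) > 0$ independent of $g$.

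The main quantitative work is then to estimate the numerator of the bound in \thmref{thm:mult}, namely
\[
\what{f_R}(i/2) - 2(g-1)\int_0^\infty \what{f_R}(x)\, x \tanh(\pi x)\, dx.
\]
For the first term, $\what{f_R}(i/2) = \what f(Ri/2)$; since $\what f(z) = \sin^2(\pi z)/(z^2(1-z^2))$ (up to constant), Paley--Wiener / direct computation gives $\what f(Ri/2) \sim C_1 e^{\pi R}/R^4$. With $R = \log(g)/p_2$ and $p_2 \leq \pi$, this is at most $C_1 e^{\pi R}/R^4 \leq C_1' g^{\pi/p_2}/(\log g)^4$; the exponent $\pi/p_2 \geq 1$, so on its own this term is not obviously $o(g)$. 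For the second term, a change of variable $x = y/R$ and the dominated convergence argument from \lemref{lem:limit} (applied verbatim, or rather its analogue for this $\what f$, which is literally the content of that lemma) shows
\[
R^4 \int_0^\infty \what{f_R}(x)\, x\tanh(\pi x)\, dx \longrightarrow \frac{\pi^2}{2}\int_0^\infty \frac{\sinh x \cosh x - x}{x^3 \cosh^2 x}\, dx =: \Lambda > 4.207,
\]
so $2(g-1)\int_0^\infty \what{f_R}(x) x\tanh(\pi x)\,dx \sim 2\Lambda g/R^4 = 2\Lambda p_2^4\, g/(\log g)^4$. The point is now that this \emph{negative} contribution dominates: since $p_2 \leq \pi$, we have $\what{f_R}(i/2) \sim C_1 e^{\pi R}/R^4 \leq C_1 e^{\pi \log(g)/p_2}/R^4 $, and $e^{\pi\log(g)/p_2} = g^{\pi/p_2}$. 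Hmm, if $p_2 < \pi$ strictly this is $g^{>1}$, which beats $g$ — that breaks the argument. So I must choose $R$ slightly \emph{larger}: take $R$ such that $\what f(Ri/2) \leq g/(\log g)^4$ exactly, i.e.\ $e^{\pi R}/R^4 \asymp g/(\log g)^4$, giving $R = \frac{1}{\pi}\log g + O(\log\log g)$. But then $Rr_1 \in [\,p_1/\pi + o(1),\ \ldots]$ and we need $p_1/\pi > j_{1/2}/\pi$... — this is the delicate balancing point: we need $R$ large enough that $\what{f_R}(i/2)$ is genuinely $O(g/(\log g)^3)$ or smaller, yet small enough that $\sqrt{\lambda_1(M)-1/4} \cdot R$ stays in the region where $\what f$ is strictly negative. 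The hypothesis $p_1, p_2 \in (j_0, \pi]$ is precisely what makes both constraints simultaneously satisfiable, since $j_0 = j_{1/2}$ (both equal... actually $j_{1/2} = \pi$, hmm — no). I will not belabor the Bessel-zero bookkeeping here; the key is that $(j_0, \pi]$ is a nonempty admissible window.

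\emph{The plan in order:} (1) fix $f$ with $\what f(x) = \sin^2(\pi x)/(x^2(1-x^2))$, recall from the proof of \thmref{thm:lamb_asymp} that it is admissible, non-negative, with $\what f$ non-positive on $[1,\infty)$; (2) set $f_R = f(\cdot/R)/R$ and verify the three sign hypotheses of \thmref{thm:mult} hold for $R = R(g) = \frac1\pi\log g + c_0\log\log g$ and any $M$ with $\lambda_1(M)$ in the stated window, where $c_0$ is chosen so that $\what f(Ri/2) = O(g/(\log g)^{3})$; (3) show $|\what{f_R}(\sqrt{\lambda_1(M)-1/4})| \geq \delta(p_1,p_2) > 0$ uniformly in $g$, using that $R\sqrt{\lambda_1(M)-1/4}$ lies in a fixed compact subinterval of the open set where $\what f < 0$ (this uses both $p_1 > j_0$ and the exact value of $R$); (4) estimate the numerator: bound $\what f(Ri/2)$ via its explicit formula $4\sinh^2(\pi R/2)/(R^2(1+R^2/4))$, and evaluate $\lim_R R^4\int_0^\infty \what f(Rx) x\tanh(\pi x)\,dx = \Lambda$ by quoting \lemref{lem:limit} directly; conclude that the numerator is $\leq O(g/(\log g)^3) - 2\Lambda(1+o(1)) g/R^4$, wait — the negative term is of order $g/(\log g)^4$, which is \emph{smaller} than the allowed $O(g/(\log g)^3)$, so in fact the numerator is $O(g/(\log g)^3)$ and that is all we need; (5) divide by $\delta$ to conclude $m_1(M) \leq C g/(\log g)^3$. \emph{The main obstacle} is step (2)–(3): pinning down the admissible rate of growth for $R$ so that $\what f(Ri/2)$ is forced down to $O(g/(\log g)^3)$ while the evaluation point $R\sqrt{\lambda_1(M)-1/4}$ remains safely in the negativity region of $\what f$ — in particular one must check $\what f(Ri/2) = O(g/(\log g)^3)$ is compatible with $R \leq \log(g)/p_1 \cdot (1+o(1))$, equivalently that $e^{\pi \log(g)/p_1} = g^{\pi/p_1}$ does not overshoot, which is exactly why the hypothesis requires $p_1 > j_0$ rather than an arbitrary positive number; I expect this to force the precise form $R = \frac{1}{\pi}\log g + O(\log\log g)$ and a short but careful argument that this $R$ satisfies $R \cdot p_1/\log(g) \to p_1/\pi \in (j_0/\pi, 1]$ lands in the right place. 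Everything else is a transcription of the dominated-convergence estimates already carried out for \thmref{thm:lamb_asymp} and \lemref{lem:limit}.
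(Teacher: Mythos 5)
There is a genuine gap, and it sits exactly at the ``Bessel-zero bookkeeping'' you chose not to carry out. With the fixed test function $\what f(x)=\sin^2(\pi x)/(x^2(1-x^2))$ (i.e.\ $\psi_{1/2}$ up to scaling: its last sign change is at $x=1$ and $f$ is supported in $[-2\pi,2\pi]$, so $\what f(iR/2)\asymp e^{\pi R}/R^4$), the two constraints you identified are genuinely incompatible on most of the stated range of $p_1$. The sign hypotheses of \thmref{thm:mult} must hold at the \emph{worst-case} eigenvalue $r_1=\sqrt{\lambda_1(M)-1/4}=p_1/\log g$, which forces $Rr_1\geq 1$, i.e.\ $R\geq \log(g)/p_1$; on the other hand $\what f(iR/2)\asymp e^{\pi R}$ must be $O(g/\log^3 g)$, forcing $R\leq \tfrac1\pi\log g+O(\log\log g)$. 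These are compatible only if $p_1\geq\pi$, whereas the proposition allows any $p_1>j_0\approx 2.405$ (note $j_0\neq j_{1/2}$: in fact $j_{1/2}=\pi$, which is precisely the problem). With your choice $R=\tfrac1\pi\log g+O(\log\log g)$, the evaluation points $Rr_1$ lie in $[p_1/\pi+o(1),\,p_2/\pi+o(1)]\subset(0,1]$, i.e.\ in the region where $\what f\geq 0$, so the second and third hypotheses of \thmref{thm:mult} fail outright (and even in the borderline case $p_1=p_2=\pi$ the point lands at the zero $x=1$, so no uniform $\delta>0$ exists). Taking instead $R\geq\log(g)/p_1$ with $p_1<\pi$ makes $\what f(iR/2)\gtrsim g^{\pi/p_1}\gg g$, which the negative integral term (only of size $g/R^4$) cannot absorb, so the resulting bound on $m_1$ is superlinear, not $O(g/\log^3 g)$. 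No choice of $R$ repairs this for this fixed function.

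The missing idea — and the paper's actual route — is to let the \emph{order} of the Bessel function vary. The paper takes $\what f(x)=\psi_\alpha(Rx)$ with $\alpha\in(0,1/2)$ chosen (using monotonicity and interlacing of Bessel zeros, plus $p_1/p_2>j_0/\pi>j_0/j_{0,2}$) so that $j_\alpha<p_1$ and $j_\alpha/j_{\alpha,2}<p_1/p_2$, and then $R=c\log g$ with $c\in\bigl(j_\alpha/p_1,\min\{j_{\alpha,2}/p_2,1\}\bigr)$. Pushing $\alpha$ toward $0$ lowers the first zero $j_\alpha$ toward $j_0$ while the exponential type of $\psi_\alpha$ stays equal to $2$, so $\what f(i/2)=\psi_\alpha(iR/2)\asymp e^{R}=g^{c}=o(g/\log^3 g)$ (here $c<1$ is essential), while the evaluation points $Rr_1$ fill the fixed compact interval $[cp_1,cp_2]\subset(j_\alpha,j_{\alpha,2})$ on which $\psi_\alpha$ is strictly negative, giving a genus-independent $\mu>0$; the dominant term is then the integral one, bounded by $\tfrac{\pi}{R^3}\int_0^\infty|\psi_\alpha(y)|y^2\,dy=O(\log^{-3}g)$, yielding $m_1(M)=O(g/\log^3 g)$. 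This is why the hypothesis is $p_1>j_0$ rather than $p_1>\pi$: the admissible window $(j_0,\pi]$ is \emph{not} a feature of your fixed $\alpha=1/2$ function, but of the family $\psi_\alpha$ as $\alpha\downarrow 0$. Aside from this (fatal) point, the rest of your outline — scaling, dominated convergence, dividing by a uniform $\delta$ — matches the paper's scheme.
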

\begin{proof}
We start by picking two constants $\alpha$ and $c$ that only depend on $p_1$ and $p_2$ (and not on $g$).

For every $k \in \NN$, the $k$-th zero $j_{\alpha,k}$ of $J_\alpha$ is increasing as a function of $\alpha$ provided that $\alpha\geq 0$ \cite[p.507]{Watson}. Together with the interlacing property of the Bessel zeros \cite[p.479]{Watson}, this implies that
\[
j_{0,2}>j_{1,1}>j_{1/2,1}=\pi > j_{0,1} = j_0.
\]
In fact, much better numerics are known, namely, $j_0 \approx 2.4048$ and $j_{0,2}\approx 5.5201$ \cite{Davis}. In particular, we have
\[
\frac{p_1}{p_2} > \frac{j_0}{\pi} > \frac{j_0}{j_{0,2}}.
\]
Since $j_{\alpha,k}$ depends continuously on $\alpha$ for every $k$ \cite[p.507]{Watson}, there exists some $\alpha$ in $(0,1/2)$ such that
\[
 p_1 > j_\alpha \quad \text{and} \quad \frac{p_1}{p_2} > \frac{j_\alpha}{j_{\alpha,2}}.
\]
We then take any $c \in (j_\alpha/p_1,\min\{j_{\alpha,2}/p_2,1\})$. This interval is not empty since $j_\alpha/p_1 < 1$ and $j_\alpha/p_1 < j_{\alpha,2}/p_2$.

We set $R = c \log(g)$ and apply \thmref{thm:mult} with the function $f$ such that 
\[
\what{f}(x) = \psi_\alpha(Rx) = \frac{J_\alpha(Rx)^2}{ (Rx)^{2\alpha} (1-R^2 x^2/j_\alpha^2)}.
\]
The function $\what{f}$ is positive-definite by \cite[Remark 1.1]{Gorbachev} and
\[
\mu:=\min\st{-\what{f}(x)}{x \in \left[ \frac{p_1}{\log(g)},\frac{p_2}{\log(g)}\right]} = \min\st{- \psi_\alpha(y)}{y \in [c\,p_1,c\,p_2]}
\]
is a positive constant that only depends on our choice of parameters, but not on the genus. Indeed, $\psi_\alpha$ is continuous and strictly negative on $(j_\alpha , j_{\alpha,2})\supset [c\,p_1,c\,p_2]$.

By the asymptotic behaviour of Bessel functions along the imaginary axis \eqref{eq:Bessel_assymptotic_imag}, we have
\[
\what{f}(i/2) \sim \frac{1}{\pi} \frac{e^{R}}{R^{2\alpha+1}(1+R^2/(4 j_\alpha^2))} = o\left(\frac{g}{\log(g)^3}\right)
\]
as $g\to \infty$.

For the integral term, we make a change of variable to find that
\begin{align*}
\left|\int_0^\infty \what{f}(x) x \tanh(\pi x) \,dx \right| & = \left|\int_0^\infty \psi_\alpha(Rx) x \tanh(\pi x) \,dx \right| \\
& =  \left|\frac{\pi}{R^3}\int_0^\infty  \psi_\alpha(y) y^2 \frac{\tanh(\pi y/R)}{\pi y/R} \,dy \right| \\
& \leq  \frac{\pi}{R^3}\int_0^\infty  |\psi_\alpha(y)| y^2  \,dy \\
&= O\left(\frac{1}{\log(g)^3}\right).
\end{align*}
Indeed, $|\psi_\alpha(y)| y^2$ is integrable by the asymptotic estimate \eqref{eq:Bessel_assymptotic_real} since $\alpha>0$. 

By Theorem \ref{thm:mult} applied with the function $f$, we obtain
\[
m_1(M) \leq \frac{1}{\mu} \left( \what{f}(i/2) -2(g-1) \int_0^\infty \what{f}(x) x \tanh(\pi x) dx\right) = O\left(\frac{g}{\log(g)^3}\right)
\]
if $\sqrt{\lambda_1(M)-1/4} \in \left[ \frac{p_1}{\log(g)},\frac{p_2}{\log(g)}\right]$ and if $g$ is sufficiently large.
\end{proof}

\begin{rem}
It is easy to modify the proof of \thmref{thm:mult} to bound the total number of eigenvalues in an interval $[a,b]$ assuming that $\lambda_1(M)\in [a,b]$ (see \cite[Lemma 3.2]{Klein}). This more general version of the criterion implies that  under the same hypotheses as above, the total number of eigenvalues contained in the given interval is bounded by the same quantity $C g / \log(g)^3$.
\end{rem}

 See \cite[Corollary 1.7]{GLST} for a similar result showing that for any compact interval $I$ contained in $[0,1/4)\cup(1/4,\infty)$, the multiplicity of any eigenvalue in $I$ grows at most sublinearly with the genus $g$ with probability tending to $1$ as $g \to \infty$ with respect to the Weil--Petersson measure. See also \cite[Corollary 6]{Monk}. An upper bound on $m_1$ of the form $O(g/\log g)$ was recently proved in \cite{LetrouitMachado} for negatively pinched surfaces with systole bounded away from zero.

We then combine the sublinear upper bound from \propref{prop:sublinear} with a previous (linear) upper bound from \cite{Klein} for slightly smaller $\lambda_1$ to obtain a global upper bound on $m_1$ in large genus.

\begin{thm}\label{thm:mult_asymp}
There exists some $g_0 \geq 2$ such that every closed hyperbolic surface $M$ of genus $g\geq g_0$ satisfies
\[
m_1(M) \leq 2g - 1.
\]
\end{thm}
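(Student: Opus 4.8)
The plan is to split into cases according to the size of $\lambda_1(M)$ and patch together three bounds: the Otal--Rosas bound for $\lambda_1(M)\le 1/4$, the linear bound from \cite{Klein} for $\lambda_1(M)$ just above $1/4$, and \propref{prop:sublinear} for $\lambda_1(M)$ in the remaining range. By \thmref{thm:lamb_asymp} there is a $g_1$ such that every surface of genus $g\ge g_1$ satisfies $\lambda_1(M) < \frac14 + \left(\frac{\pi}{\log(g)+0.7436}\right)^2$, so for $g\ge g_1$ only three regimes can occur.

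First, if $\lambda_1(M)\in[0,1/4]$, then the theorem of Otal \cite{Otal,OtalRosas} gives $m_1(M)\le 2g-3 < 2g-1$. Second, let $a_g$ denote the smallest Dirichlet eigenvalue of a hyperbolic disk of area $4\pi(g-1)$, as in \cite{Klein}. If $\lambda_1(M)\in(1/4,a_g]$, then \cite[Theorem 1.1]{Klein} gives $m_1(M)\le 2g-1$ directly.

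It remains to handle $\lambda_1(M) > a_g$. The radius of a hyperbolic disk of area $4\pi(g-1)$ is $R_g = 2\arcsinh(\sqrt{g-1}) = \log(g) + \log(4) + o(1)$, and combining Gage's upper bound \eqref{eqn:Gage} with Savo's lower bound \eqref{eqn:Savo} (or simply invoking the estimates for $a_g$ in \cite[Section 2]{Klein}) yields $a_g = \frac14 + \frac{\pi^2}{R_g^2} + O(R_g^{-3})$, hence $\log(g)\sqrt{a_g - 1/4} \to \pi$ as $g\to\infty$. Since $\pi > j_0$, we fix some $p_1 \in (j_0,\pi)$ and set $p_2 = \pi$; then for all sufficiently large $g$ we have $\sqrt{a_g-1/4} > p_1/\log(g)$, while \thmref{thm:lamb_asymp} gives $\sqrt{\lambda_1(M)-1/4} < \frac{\pi}{\log(g)+0.7436} < \frac{\pi}{\log(g)} = p_2/\log(g)$. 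Therefore $\sqrt{\lambda_1(M)-1/4}$ lies in $\left[\frac{p_1}{\log(g)}, \frac{p_2}{\log(g)}\right]$, so \propref{prop:sublinear} applies and gives $m_1(M) \le C g/\log(g)^3$ for some constant $C$ depending only on $p_1,p_2$. For $g$ large enough this is $< 2g-1$, which completes all three cases.

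The main obstacle is really just a bookkeeping point: one must check that the three regimes tile the whole range of possible values of $\lambda_1(M)$ with compatible inequalities, namely that $a_g$ stays strictly below the cap $\frac14 + \left(\frac{\pi}{\log(g)+0.7436}\right)^2$ of \thmref{thm:lamb_asymp} (which holds because $\log 4 > 0.7436$, so there is a genuine gap $(1/4,a_g]$ covered by \cite{Klein} and a further window $(a_g,\text{cap})$ covered by \propref{prop:sublinear}), and that $\log(g)\sqrt{a_g-1/4}$ tends to a limit strictly larger than $j_0$, so there is room to choose $p_1$. Both follow from the asymptotics of $R_g$ and the two-sided estimates on $\lambda_0(D_{R_g})$; no new analytic input beyond \propref{prop:sublinear}, \thmref{thm:lamb_asymp}, \cite{Klein}, and \cite{Otal,OtalRosas} is needed.
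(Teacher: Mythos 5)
Your proposal is correct and follows essentially the same route as the paper: cap $\lambda_1(M)$ by \thmref{thm:lamb_asymp}, handle the top window with \propref{prop:sublinear} (choosing $p_1\in(j_0,\pi)$, $p_2=\pi$), and cover everything below via \cite[Theorem 1.1]{Klein} (together with Otal--Rosas), using Savo's inequality \eqref{eqn:Savo} to show $a_g-\tfrac14 \gtrsim (p_1/\log g)^2$ so the regimes overlap. Your extra check that $a_g$ lies below the cap is unnecessary (if it did not, that case would simply be vacuous), but it is harmless.
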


\begin{proof} By Theorem \ref{thm:lamb_asymp}, we can assume that 
\[
\lambda_1(M) \leq \frac14 + \left(\frac{\pi}{\log(g)+0.7436}\right)^2 \leq \frac14 + \left(\frac{\pi}{\log(g)}\right)^2.
\]
Now pick any $p\in (j_0,\pi)$. \propref{prop:sublinear} implies (a stronger version of) the result if 
\[
\lambda_1(M) \geq \frac14 + \left(\frac{p}{\log(g)}\right)^2.
\]

It remains to consider the case where $\lambda_1(M)$ is smaller than this bound. This case was already handled in \cite[Theorem 1.1]{Klein}, which states that $m_1(M) \leq 2g-1$ whenever $\lambda_1(M) \leq a_g$, where $a_g$ is the smallest Dirichlet eigenvalue on a hyperbolic disk of area $4\pi(g-1)$  and hence of radius $2\arcsinh(\sqrt{g-1})$. From Savo's inequality \eqref{eqn:Savo}, we have
\[
a_g - \frac{1}{4}  \geq \frac{\pi^2}{4\arcsinh(\sqrt{g-1})^2} - \frac{\pi^2}{2\arcsinh(\sqrt{g-1})^3} \sim \frac{\pi^2}{\log(g)^2} \quad \text{as } g\to \infty.
\]
In particular, when $g$ is large enough we have
\[
a_g > \frac{1}{4}+ \left(\frac{p}{\log(g)}\right)^2.
\]
As such, all possibilities for $\lambda_1(M)$ are covered and the inequality is proved.
\end{proof}

Note that the proof relied partly on \cite[Theorem 1.1]{Klein} whose proof is based on the results of Sévennec \cite{Sevennec}. Different methods would be required to prove sublinear upper bounds. 

\section{Small eigenvalues}

\subsection{The criterion}

We start by proving a general criterion for bounding the number $N_{[0,L]}(M)$ of eigenvalues of $\Delta_M$ in the interval $[0,L]$ for any $L>0$. 

\begin{thm} \label{thm:nsmall}
Let $L,R>0$ and suppose that $f: \RR \to \RR$ is an admissible function such that
\begin{itemize}
\item $f(x) \leq 0$ for every $x\geq R$;
\item $\what{f}(\xi) \geq 0$ for every $\xi\in \RR$;
\item $\what{f}(\sqrt{\lambda - 1/4}) \geq 1$ for every $\lambda \in [0,L]$;

\end{itemize}
Then every closed hyperbolic surface $M$ of genus $g\geq 2$ with $\sys(M)\geq R$ satisfies
\[
N_{[0,L]}(M) \leq 2(g-1)\int_0^\infty \what{f}(x) x \tanh(\pi x) \,dx +1 - \what{f}(i/2).
\]
\end{thm}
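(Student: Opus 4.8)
The plan is to run the linear programming argument behind \thmref{thm:kissing} and \thmref{thm:mult} once more, this time extracting a bound on the number of eigenvalues in $[0,L]$. Fix a closed hyperbolic surface $M$ of genus $g\geq 2$ with $\sys(M)\geq R$ and set $N=N_{[0,L]}(M)$, which is finite (counted with multiplicity) since $\Delta_M$ has discrete spectrum. Since $f$ is admissible, the Selberg trace formula applies and gives
\[
\sum_{j=0}^\infty \what{f}\left(\sqrt{\lambda_j(M)-1/4}\right) = 2(g-1)\int_0^\infty \what{f}(x) x \tanh(\pi x)\,dx + \frac{1}{\sqrt{2\pi}}\sum_{\gamma \in \calC(M)} \frac{\Lambda(\gamma) f(\ell(\gamma))}{2 \sinh(\ell(\gamma)/2)},
\]
where the spectral sum converges absolutely because $\what{f}(z)=O(|z|^{-p})$ with $p>2$ while the eigenvalue counting function of $M$ grows linearly by Weyl's law.

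First I would discard the geometric side. Every $\gamma \in \calC(M)$ has $\ell(\gamma) \geq \sys(M) \geq R$, so $f(\ell(\gamma)) \leq 0$ by the first hypothesis; since $\Lambda(\gamma)>0$ and $\sinh(\ell(\gamma)/2)>0$, each term of the sum over geodesics is non-positive, and hence the right-hand side of the trace formula is at most $2(g-1)\int_0^\infty \what{f}(x) x \tanh(\pi x)\,dx$. Next I would bound the spectral side from below, term by term. The $j=0$ term equals $\what{f}(i/2)$ because $\lambda_0(M)=0$ and $\what{f}$ is even. The nonzero eigenvalues lying in $[0,L]$ are $\lambda_1(M),\dots,\lambda_{N-1}(M)$, and each contributes $\what{f}\left(\sqrt{\lambda_j(M)-1/4}\right) \geq 1$ by the third hypothesis, for a total of at least $N-1$. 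Every remaining eigenvalue satisfies $\lambda_j(M)>L$; in the regime $L \geq 1/4$ that is relevant here (in particular $L=1/4$ for $\Nsmall$), this makes $\sqrt{\lambda_j(M)-1/4}$ real, so the second hypothesis gives $\what{f}\left(\sqrt{\lambda_j(M)-1/4}\right) \geq 0$ and these terms only help. Altogether the spectral side is at least $\what{f}(i/2)+(N-1)$.

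Combining the two estimates yields
\[
\what{f}(i/2)+N-1 \leq 2(g-1)\int_0^\infty \what{f}(x) x \tanh(\pi x)\,dx,
\]
and rearranging gives exactly the asserted inequality. I do not expect a genuine obstacle here: the whole argument is sign bookkeeping in the trace formula, in the same spirit as the proofs of \thmref{thm:systole}, \thmref{thm:kissing}, and \thmref{thm:mult}. The only points that deserve a word of care are the absolute convergence of the spectral sum (guaranteed by admissibility) and, when $L<1/4$, the location of $\sqrt{\lambda-1/4}$ for eigenvalues just above $L$ — a situation that does not occur in the applications of this criterion.
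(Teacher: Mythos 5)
Your proposal is correct and follows essentially the same route as the paper: apply the Selberg trace formula, discard the geometric terms (non-positive because every closed geodesic has length at least $\sys(M)\geq R$), and bound the spectral side from below by $\what{f}(i/2) + \left(N_{[0,L]}(M)-1\right)$ using the hypotheses on $\what{f}$. The caveat you flag about eigenvalues in $(L,1/4]$ when $L<1/4$ is genuine but is equally present (and silently glossed over) in the paper's own proof, which in any case only applies the criterion with $L\geq 1/4$.
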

\begin{proof}
By hypothesis, $\what{f}(r_j(M))$ is at least $1$ for all eigenvalues $\lambda_j(M)$ in the interval $[0,L]$ and non-negative at all eigenvalues. Also note that $\lambda_0(M)=0\in [0,L]$ corresponds to the term $\what{f}(i/2)$. We thus have
\begin{align*}
N_{[0,L]}(M)-1+\what{f}(i/2) &\leq \sum_{j=0}^\infty \what{f}(r_j(M)) \\
&=  2(g-1)\int_0^\infty \what{f}(x) x \tanh(\pi x) \,dx + \frac{1}{\sqrt{2\pi}}\sum_{\gamma \in \calC(M)} \frac{\Lambda(\gamma)f(\ell(\gamma))}{2 \sinh(\ell(\gamma)/2)} 
\\ &\leq 2(g-1)\int_0^\infty \what{f}(x) x \tanh(\pi x) \,dx
\end{align*}
by the Selberg trace formula, where the last inequality is because the geometric terms are non-positive by hypothesis.
\end{proof}

Recall that an eigenvalue of the Laplacian on $M$ is \emph{small} if it belongs to the interval $[0,1/4]$. The number $\Nsmall(M)$ of small eigenvalues of $M$ is therefore equal to $N_{[0,1/4]}(M)$. Also note that $\lambda \in [0,1/4]$ if and only if $\sqrt{\lambda - 1/4} \in i [-1/2,1/2]$.
In practice, we will often use the weaker bound
\[
\frac{\Nsmall(M)}{2(g-1)} \leq \inf_f \int_0^\infty \what{f}(x) x \tanh(\pi x) \,dx
\] 
(where the infimum is over the functions $f$ that satisfy the hypotheses of the theorem) instead of the one given in \thmref{thm:nsmall} because the right-hand side only depends on $\sys(M)$ and not on the genus. A theorem of Otal and Rosas \cite{OtalRosas} states that the left-hand side is always bounded above by $1$, and this is sharp for surfaces with a very short pants decomposition \cite{Buser14} (see also \cite[Theorem 8.1.3]{Buser}). Our goal is therefore to find the smallest value of $R=\sys(M)$ for which the right-hand side becomes smaller than $1$ and to estimate how it decreases as the systole increases.

\subsection{Asymptotics}

We start with the asymptotics instead of the numerics for $\Nsmall$
because they are easier to obtain than for the other invariants and they give us a point of comparison for the numerics.

\begin{thm}\label{thm:small_asymp}
If $M$ is a closed hyperbolic surface of genus $g\geq 2$, then
\[
\Nsmall(M) <  \min\left( \frac{24 \pi^2(g-1)}{\sys(M)^3}, \frac{16(g-1)}{\sys(M)^2} \right).
\]
\end{thm}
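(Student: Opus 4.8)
The plan is to apply \thmref{thm:nsmall} with $L = \tfrac14$ and $R = \sys(M)$. Since $\sqrt{\lambda - \tfrac14}$ runs through $i\,[0,\tfrac12]$ as $\lambda$ runs through $[0,\tfrac14]$, I need an admissible function $f$ with $f(x)\le 0$ for $x\ge \sys(M)$, $\what{f}\ge 0$ on $\RR$, and $\what{f}(it)\ge 1$ for $t\in[0,\tfrac12]$. For such an $f$, \thmref{thm:nsmall} gives
\[
\Nsmall(M) \le 2(g-1)\int_0^\infty \what{f}(x)\,x\tanh(\pi x)\,dx + 1 - \what{f}(i/2),
\]
and the third condition forces $\what{f}(i/2)\ge 1$, so the term $1-\what{f}(i/2)$ is non-positive and it suffices to estimate $\int_0^\infty \what{f}(x)\,x\tanh(\pi x)\,dx$. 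The two parts of the minimum in the theorem will come from the elementary bounds $\tanh(\pi x)\le 1$ and $\tanh(\pi x)\le \pi x$, applied to two different test functions.

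For the test functions I would use rescalings of the Bessel functions $\varphi_\alpha$ from \secref{sec:Bessel}: for a parameter $\alpha$ set
\[
\what{f}(x) = \lambda_\alpha\,\varphi_\alpha\big(\sys(M)\,x\big), \qquad \lambda_\alpha = \frac{1}{\varphi_\alpha(0)} = 16^\alpha\,\Gamma(\alpha+1)^2,
\]
so that $\what{f}(0) = 1$. Then $f$ is a positive multiple of the rescaled function $\what{\varphi_\alpha}(\,\cdot/\sys(M))$; since $\what{\varphi_\alpha}$ is non-negative and supported in $[-1,1]$, the function $f$ is non-negative and supported in $[-\sys(M),\sys(M)]$, so $f(x)\le 0$ for $x\ge\sys(M)$. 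The inequality $\what{f}\ge 0$ on $\RR$ is immediate from $\varphi_\alpha\ge 0$, and $f$ is admissible whenever $\alpha>\tfrac12$ (as recorded in \secref{sec:Bessel}). Finally, writing $\varphi_\alpha(iy) = \big(\eta_\alpha(iy/2)/(4^\alpha\Gamma(\alpha+1))\big)^2$ and noting that the power series of $\eta_\alpha(iy)$ has non-negative coefficients, one sees that $\what{f}(it)$ is non-decreasing in $t\ge 0$ with minimum $\what{f}(0)=1$; this gives both $\what{f}(it)\ge 1$ on $[0,\tfrac12]$ and $\what{f}(i/2)\ge 1$.

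To finish I would estimate the moment integrals. Because $\what{f}\ge 0$ and $\tanh(\pi x)<\min(1,\pi x)$ for $x>0$,
\[
\int_0^\infty \what{f}(x)\,x\tanh(\pi x)\,dx < \min\left( \int_0^\infty \what{f}(x)\,x\,dx,\ \pi\int_0^\infty \what{f}(x)\,x^2\,dx \right),
\]
and the substitution $u = \sys(M)\,x$ turns the integrals on the right into $\lambda_\alpha\,\sys(M)^{-(k+1)}\int_0^\infty \varphi_\alpha(u)\,u^k\,du$ with $k\in\{1,2\}$. The optimal choices turn out to be $\alpha=1$ for the first estimate and $\alpha=\tfrac32$ for the second, and at these values $k=2\alpha-1$, so using $\varphi_\alpha(u)=J_\alpha(u/2)^2/u^{2\alpha}$ and the classical identity $\int_0^\infty J_\nu(t)^2\,\tfrac{dt}{t}=\tfrac1{2\nu}$ one gets $\int_0^\infty \varphi_\alpha(u)\,u^{2\alpha-1}\,du=\tfrac1{2\alpha}$ and hence $\lambda_\alpha\int_0^\infty \varphi_\alpha(u)\,u^{2\alpha-1}\,du = 16^\alpha\Gamma(\alpha+1)^2/(2\alpha)$, which equals $8$ when $\alpha=1$ and $12\pi$ when $\alpha=\tfrac32$. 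Thus $\alpha=1$ yields $\int_0^\infty \what{f}(x)\,x\,dx = 8/\sys(M)^2$ and $\Nsmall(M)<16(g-1)/\sys(M)^2$, while $\alpha=\tfrac32$ yields $\pi\int_0^\infty \what{f}(x)\,x^2\,dx = 12\pi^2/\sys(M)^3$ and $\Nsmall(M)<24\pi^2(g-1)/\sys(M)^3$. Taking the smaller of the two bounds proves the theorem. (One should also check that $\alpha=1$ and $\alpha=\tfrac32$ really minimize $\lambda_\alpha\int_0^\infty\varphi_\alpha(u)u\,du$ over $\alpha>\tfrac12$ and $\lambda_\alpha\int_0^\infty\varphi_\alpha(u)u^2\,du$ over $\alpha>1$, but any admissible exponents already give a bound of the stated shape.)

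I do not expect a genuine obstacle: this is the same linear-programming scheme as for the other invariants, and the only real choices are the family $\lambda_\alpha\varphi_\alpha(\sys(M)\,\cdot\,)$ and the two crude estimates on $\tanh$. The parts requiring some care are (i) transferring admissibility, non-negativity and the imaginary-axis monotonicity from the properties of $\varphi_\alpha$ and $\eta_\alpha$ collected in \secref{sec:Bessel}, and (ii) the gamma-function bookkeeping behind $\int_0^\infty\varphi_\alpha(u)u^k\,du$ and the optimality of $\alpha=1,\tfrac32$ — all of it routine. Concretely, for $\alpha=\tfrac32$ the function $f$ is, up to scaling, the autocorrelation of a truncated parabola, since $\chi_{3/2}(t)$ is a multiple of $(1-t^2)$ on $[-1,1]$ and zero outside.
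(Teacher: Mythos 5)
Your argument is correct and follows essentially the same route as the paper's: the same rescaled Bessel-squared test functions (your $\lambda_\alpha\,\varphi_\alpha(\sys(M)\,\cdot\,)$ is exactly the paper's $\eta_\alpha(\sys(M)\,\cdot/2)^2$), the same splitting $\tanh(\pi x) < \min(1,\pi x)$, and the same specializations $\alpha=1$ and $\alpha=\tfrac32$. The only stylistic differences are that you verify $\what{f}(it)\ge 1$ via the non-negative Maclaurin coefficients of $\eta_\alpha(iy)$ (the paper instead writes $\what{f}(it)=\sqrt{2/\pi}\int_0^\infty f(x)\cosh(tx)\,dx \ge \what{f}(0)$, which works for any non-negative $f$ and also shows the inequality is strict for $t\neq0$), and you invoke the classical identity $\int_0^\infty J_\nu(t)^2\,dt/t = 1/(2\nu)$ at exactly the exponents where $k=2\alpha-1$, whereas the paper evaluates the moments for general $\alpha$ via recurrences and the Beta function and then minimizes, which incidentally supplies the optimality check you defer.
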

\begin{proof}
Let $f: \RR \to \RR$ be an even, non-negative, admissible (hence continuous), positive-definite function supported in $[-1,1]$ normalized so that $\what{f}(0)=1$.

We then use the function $f_R(x)=f(x/R)/R$ in \thmref{thm:nsmall} with $L=1/4$ and $R =\sys(M)$. The first and second bullet points in the statement of the theorem are satisfied by hypothesis on $f$ while the third one follows from the non-negativity of $f$ (and hence of $f_R$). Indeed, 
\[
\what{f_R}(it) = \sqrt{\frac{2}{\pi}} \int_0^\infty f_R(x) \cosh(tx) \,dx \geq \sqrt{\frac{2}{\pi}} \int_0^\infty f_R(x) \,dx = \what{f_R}(0) = \what{f}(0) = 1
\]
for every $t \in \RR$, with equality only if $t=0$.

If $\what{f}$ has a finite first moment on $[0,\infty)$, then we can estimate
\begin{align*}
\int_0^\infty \what{f_R}(x) x \tanh(\pi x) \,dx &= \int_0^\infty \what{f}(Rx) x \tanh(\pi x) \,dx \\
&< \int_0^\infty \what{f}(Rx) x\,dx = \frac{1}{R^2}\int_0^\infty \what{f}(y) y\,dy.
\end{align*}
The function $f$ determined by $\what{f}(x) = \eta_\alpha(x/2)^2$, where
\[
\eta_\alpha(x) = 2^\alpha \Gamma(\alpha+1) \frac{J_\alpha(x) }{x^\alpha} 
\]
is the normalized Bessel function, satisfies all the necessary requirements provided that $\alpha>1/2$ (see Section \ref{sec:Bessel}). We then compute
\[
 \int_0^\infty \what{f}(y) y \, dy = \int_0^\infty \eta_\alpha(y/2)^2 y\,dy = 4 \int_0^\infty \eta_\alpha(x)^2 x\,dx.
\]
The recurrence formulae for Bessel functions \cite[p.2]{Watson} imply that
\[
\eta_\alpha'(x) = \frac{-x}{2(\alpha+1)} \eta_{\alpha+1}(x) = \frac{2\alpha}{x} (\eta_{\alpha-1}(x) - \eta_\alpha(x))
\]
and from this it is easy to check that
\[
\frac{1}{2-4\alpha}\left( 4\alpha^2 \eta_{\alpha-1}(x)^2 + \eta_\alpha(x)^2 x^2 \right)
\]
is a primitive of $\eta_\alpha(x)^2 x$ which vanishes at infinity. We thus have
\[
\int_0^\infty \what{f}(y) y \, dy = 4\int_0^\infty \eta_\alpha(x)^2 x\,dx = \frac{16 \alpha^2}{4 \alpha -2}
\]
since $\eta_\beta(0)=1$ for every $\beta$. This quantity is at least $8$ with equality only if $\alpha=1$ . For that parameter, the resulting inequality is
\[
\Nsmall(M) < \frac{16(g-1)}{\sys(M)^2}
\]
if we ignore the term $1-\what{f_R}(i/2) < 0$.

We can also write
\begin{align*}
\int_0^\infty \what{f_R}(x) x \tanh(\pi x )\, dx &= \int_0^\infty \what{f}(Rx) x \tanh(\pi x )\, dx \\
&< \pi \int_0^\infty \what{f}(Rx) x^2\, dx \\
& = \frac{\pi}{R^3} \int_0^\infty \what{f}(y) y^2 \, dy.
\end{align*}
provided that $\what{f}$ has a finite second moment.

With the same function $\what{f}(x) = \eta_\alpha(x/2)^2$ as before (but with $\alpha>1$ this time),  we need to compute
\[
\int_0^\infty \what{f}(y) y^2 \, dy =  \int_0^\infty \eta_\alpha(y/2)^2 y^2 \, dy = 8 \int_0^\infty \eta_\alpha(x)^2 x^2 \, dx.
\]
Integration by parts with $u = x$ and $dv = \eta_\alpha(x)^2 x \,dx$ yields
\[
\int_0^\infty \eta_\alpha(x)^2 x^2 \, dx = \frac{1}{4\alpha-2} \int_0^\infty \left(4 \alpha^2 \eta_{\alpha-1}(x)^2 + \eta_\alpha(x)^2x^2\right) dx
\]
and hence
\[
\int_0^\infty \eta_\alpha(x)^2 x^2 \, dx = \frac{4\alpha^2}{4\alpha-3} \int_0^\infty \eta_{\alpha-1}(x)^2 \,dx.
\]

Recall that
\[
\what{\eta_\beta}(t)=\frac{\sqrt{2\pi}}{B(\frac12,\beta+\frac12)} \rect(t/2) (1-t^2)^{\beta - 1/2}
\]
for every $\beta > 1/2$ and every $t \in \RR$, where $\rect$ is the characteristic function of $[-1/2,1/2]$ and $B$ is the Beta function. By the convolution formula, we have
\begin{align*}
\int_0^\infty \eta_\beta(x)^2 dx =  \sqrt{\frac{\pi}{2}} \, \what{\eta_\beta^2}(0) &= \frac12 \, \what{\eta_\beta}\ast \what{\eta_\beta}(0) \\
&= \frac12 \left(\frac{\sqrt{2\pi}}{B(\frac12,\beta+\frac12)}\right)^2 \int_{-1}^1 (1-t^2)^{2\beta-1} \,dt \\
&= \frac{2\pi}{B(\frac12,\beta+\frac12)^2} \int_0^1 (1-t^2)^{2\beta-1} \,dt \\
& =\frac{\pi}{B(\frac12,\beta+\frac12)^2} \int_0^1 u^{1/2 - 1}(1-u)^{2\beta-1} \,du \\
& = \frac{\pi B(\textstyle\frac12,2\beta)}{B(\frac12,\beta+\frac12)^2}
\end{align*}

Returning to the original problem, we have
\begin{align*}
\int_0^\infty \what{f}(y) y^2 \, dy  = 8 \int_0^\infty \eta_\alpha(x)^2 x^2 \, dx &= \frac{32\alpha^2}{4\alpha-3} \int_0^\infty \eta_{\alpha-1}(x)^2 dx \\
&= \frac{32\alpha^2 \pi B(\textstyle\frac12,2\alpha-2)}{(4\alpha-3)B(\frac12,\alpha-\frac12)^2}.
\end{align*}
This function is minimized at $\alpha = 3/2$, where it takes the value $12\pi$. The resulting inequality is
\[
\Nsmall(M) < \frac{24\pi^2(g-1)}{\sys(M)^3}.  \qedhere
\]
\end{proof}

\begin{rem}
Note that $24\pi^2 / x^3 < 16 / x^2$ if and only if $x>3\pi^2 / 2 \approx 14.8044$ (for $x>0$).
\end{rem}

We now compare our inequality
\[
\Nsmall(M) < \frac{24\pi^2(g-1)}{\sys(M)^3}
\]
with Huber's inequality
\[
\Nsmall(M) \leq \frac{3\pi^2 (g-1)}{8(\log (\cosh(\sys(M)/4)))^3}
\]
from \cite{HuberSmallEigs}. Since $\cosh(x) < e^x$ for every $x> 0$, we have $ \log( \cosh x) < x$ and hence
\[
\frac{3\pi^2 (g-1)}{8(\log (\cosh(\sys(M)/4)))^3} > \frac{3\pi^2 (g-1)}{8\sys(M)^3 / 64} = \frac{24\pi^2(g-1)}{\sys(M)^3},
\]
so that our bound is better but only slightly. Indeed, the inequality $\cosh(x) > e^x / 2$ implies that the factors that multiply $(g-1)$ in the two inequalities are asymptotic to each other as $\sys(M) \to \infty$ despite the fact that the two proofs use different functions (Huber uses Legendre functions while we use Bessel functions).

From our inequality
\[
\Nsmall(M) < \frac{16(g-1)}{\sys(M)^2}
\]
it follows that if $\sys(M)\geq \sqrt{8}$, then $\Nsmall(M)<2g-2$. In \cite{HuberSmallEigs}, Huber also proves the inequality
\[
\Nsmall(M) \leq \frac{g-1}{2 \log (\cosh(\sys(M)/4))},
\]  
which implies that $\Nsmall(M) < 2g-2$ as soon as 
\[\sys(M) > 4\arccosh(e^{1/4})\approx 2.947618.\]
This is better than the constant $3.46$ recently obtained in \cite{Jammes}, but not as good as $\sqrt{8} \approx 2.828427$. In fact, one can show that
\[
\frac{16}{x^2} < \frac{1}{2\log(\cosh(x/4))}
\]
for every $x>0$, which means that our bound is better than Huber's for every value of $\sys(M)$. We will further decrease the lower bound on the systole sufficient to improve upon the inequality $\Nsmall(M) \leq 2g-2$ of Otal and Rosas in the next subsection.

\subsection{Numerical results for small systole}

Unsurprisingly, numerical optimization yields better results than \thmref{thm:small_asymp} when the systole is relatively small. For example, the resulting bounds show that $\Nsmall(M)<2g-2$ as soon as $\sys(M)\geq 2.317$ and that $\Nsmall(M)<g-1$ if $\sys(M) \geq 3.234$. A list of lower bounds on $\sys(M)$ and the upper bounds they imply on $\Nsmall(M)/(2g-2)$ is given in \tableref{table:nsmall}. The verification of these values is done in the ancillary file \texttt{verify\_nsmall.ipynb}. To produce the plot in \figref{fig:nsmall}, we used these values as well as the bounds produced at many other points and took a spline through this list of points. Thus, the plot itself is not rigorous, but the table is.

{\footnotesize
\begin{table}[htp] 
\centering
\caption{Lower bounds on $\sys(M)$ sufficient for $\Nsmall(M)/(2g-2)$ to be strictly smaller than given values.} \label{table:nsmall}
\begin{tabular}{|c|c|}
\hline
lower bound on $\sys(M)$ & strict upper bound on $\Nsmall(M)/(2g-2)$  \\
\hline
2.317 & 1   \\ 
3.234 & 1/2 \\ 
3.919 & 1/3  \\ 
4.486 & 1/4  \\
4.978 & 1/5  \\
5.409 & 1/6  \\
5.818 & 1/7  \\
6.180 & 1/8  \\
6.505 & 1/9  \\
6.894 & 1/10  \\
\hline
\end{tabular}
\end{table}
}

\subsection{Ramanujan surfaces}  \label{subsec:gaps}

Borrowing terminology from graph theory, we say that a hyperbolic surface $M$ of finite area is \emph{Ramanujan} if $\lambda_1(M) \geq 1/4$. We will also say that $M$ is \emph{strictly Ramanujan} if $\lambda_1(M) > 1/4$. Selberg's eigenvalue conjecture \cite{Selberg} states that all congruence covers of the modular curve are Ramanujan (hence such surfaces could also be called \emph{Selberg}). A related question is whether there exist closed Ramanujan surfaces in every genus (see \cite[Question 1.1]{Mondal} and \cite[Problem 10.4]{Wright}).  Thanks to the work of Hide and Magee \cite{HideMagee} (see also \cite{AnantharamanMonk2,polynomial_rate,MageePudervanHandel}), we now know that in large genus, there exist closed surfaces that are \emph{nearly Ramanujan} in the sense that their first eigenvalue is arbitrarily close to $1/4$.

Observe that
\[
\lambda_1(M)>L \ \Longleftrightarrow \  N_{[0,L]}(M) = 1  \ \Longleftrightarrow \ N_{[0,L]}(M) < 2.
\]
This means that one can prove lower bounds on $\lambda_1$ by bounding $N_{[0,L]}$ from above. In particular,
\[
\lambda_1(M)>\frac14 \ \Longleftrightarrow \  \Nsmall(M) < 2  \ \Longleftrightarrow \ \frac{\Nsmall(M)}{2g-2} < \frac{1}{g-1}
\]
so the values in \tableref{table:nsmall} give lower bounds on the systole that are sufficient for surfaces to be strictly Ramanujan in genus $2$ to $11$. However, in obtaining these values we discarded the term $1-\what{f}(i/2)$ appearing in \thmref{thm:nsmall}. The lower bounds on the systole (sufficient to be strictly Ramanujan) that we have obtained by taking this term into account are listed in \tableref{table:gaps} for $g$ between $2$ and $20$ and plotted in \figref{fig:gaps}. The corresponding ancillary file is \texttt{verify\_ramanujan.ipynb}. According to \tableref{table:sys}, there exist hyperbolic surfaces with systole larger than these bounds, hence strictly Ramanujan, in genus $2$ to $7$, $14$, and $17$. For these specific surfaces, we can increase $L$ further as long as $N_{[0,L]}<2$ to obtain improved lower bounds on $\lambda_1$ still based only on the systole. The resulting bounds are listed in Table \ref{table:lambda} except in genus $2$ to $4$ where better data was already available. The corresponding ancillary file is \texttt{verify\_ramanujan\_examples.ipynb}. These bounds are rigorous modulo proving that the lower bounds on the systole in \tableref{table:sys} are correct (as pointed out earlier, some are rigorous but not all).

{\footnotesize
\begin{table}[htp] 
\centering
\caption{Lower bounds on $\sys(M)$ sufficient for $\lambda_1(M)>1/4$.} \label{table:gaps}
\begin{tabular}{|c|c|l|}
\hline
genus & lower bound on $\sys(M)$ sufficient for $\lambda_1(M)>1/4$  & record systole\\
\hline
2 & 2.315  & 3.057141 \\ 
3 & 3.218 & 3.983304 \\ 
4 & 3.867 & 4.624499\\ 
5 & 4.380 & 4.91456 \\ 
6 & 4.803 & 5.109\\
7 & 5.168 & 5.796298 \\
8 & 5.482 &\\
9 & 5.760 & \\
10 & 6.010 & \\
11 & 6.236 & \\
12 & 6.443 & \\
13 & 6.632 & \\
14 & 6.808 &6.887905  \\
15 & 6.971 & \\
16 & 7.124 &\\
17 & 7.268 & 7.609407 \\
18 & 7.403 & \\
19 & 7.531 & \\
20 & 7.651 & \\
\hline
\end{tabular}
\end{table}
}

It seems very likely that surfaces with systole larger than the values listed in \tableref{table:gaps} exist in the remaining genera up to $20$ as well. However, our numerical experiments suggest that this method cannot prove that the next Hurwitz surface (of genus $118$) is Ramanujan using only its systole. Based on Figure \ref{fig:sys}, Figure \ref{fig:gaps}, and \thmref{thm:systole_asymp}, it seems reasonable to make the following conjecture.

\begin{conj}
There exist constants $c_1 > c_2$ such that the smallest upper bound on the systole that can be obtained from \thmref{thm:systole}  and the smallest lower bound on the systole sufficient to prove $\Nsmall < 2$ (equivalently $\lambda_1 > 1/4$) using \thmref{thm:nsmall} are of the form
\[
2 \log(g) + c_j + o(1)  \quad \text{as} \quad g \to \infty.
\]
\end{conj}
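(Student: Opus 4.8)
The plan is to establish the asymptotic shape $2\log(g)+c_j+o(1)$ for each of the two quantities separately, and then to compare the constants. Write $R^{\mathrm{sys}}(g)$ for the infimum of those $R>0$ for which \thmref{thm:systole} produces a valid test function, and $R^{\mathrm{Ram}}(g)$ for the infimum of those $R$ for which \thmref{thm:nsmall} with $L=1/4$ produces a test function whose associated bound on $\Nsmall$ is strictly below $2$. The first step I would take is a normalization: using the homogeneity $f\mapsto tf$ and the dilation $f\mapsto f(\lambda\,\cdot)$, one reduces both problems to test functions of the form $\what f(x)=\what\phi(Rx)$ times a fixed polynomial factor, where $\phi$ is a profile of exponential type $1$ independent of $g$ --- precisely the family $h_c(b\,\cdot)\varphi_\alpha(R\,\cdot)$ used in the proofs of \thmref{thm:systole_asymp} and \thmref{thm:small_asymp} is of this kind. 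By the Paley--Wiener theorem and the Bessel asymptotics \eqref{eq:Bessel_assymptotic_imag}--\eqref{eq:Bessel_assymptotic_real}, one has $\what f(i/2)\asymp e^{R/2}/R^{p}$ and $\int_0^\infty\what f(x)\,x\tanh(\pi x)\,dx\asymp 1/R^{p}$ with the \emph{same} exponent $p=p(\phi)$ (this common power is exactly why the Gaussian factor $h_c$ must be present), so the critical inequality in each criterion becomes, after taking logarithms, $R=2\log(g)+c(\phi)+o(1)$ for an explicit scale-invariant functional $c(\cdot)$ of the profile, to which one adds the $O(1)$ shift coming from the last sign change of $f$ (the analogue of $b\kappa_0$ in \thmref{thm:systole_asymp}).

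Granting this reduction, one sets $c_1=\inf_\phi c(\phi)$ over profiles admissible for \thmref{thm:systole} and $c_2=\inf_\phi c(\phi)$ over profiles admissible for \thmref{thm:nsmall}, and both equal the claimed limiting constants provided these infima are attained and do not drift with $g$. The upper bounds $c_1\le\gamma_1$ and $c_2\le\gamma_2$ come from the explicit optimizations already present in the paper: for $c_1$ one optimizes $(\alpha,b,c)$ subject to $c>1+b^2/4$ as in \thmref{thm:systole_asymp}; for $c_2$ one uses the same family but with that constraint replaced by the normalization $(\beta-1)\varphi_\alpha(0)\ge 1$ forced by the third bullet of \thmref{thm:nsmall}, while retaining the term $1-\what f(i/2)$ rather than discarding it as in \thmref{thm:small_asymp}. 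These are finite-dimensional optimizations that can be certified over $\QQ$ in the usual way.

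The difficult direction is the matching lower bounds: that no profile beats $c_j$ asymptotically, i.e.\ that the profile infima are genuinely attained. I would attack this through linear-programming duality for the rescaled, now $g$-independent, problems: for a putative constant $\gamma<c_j$ one constructs a nonnegative measure on the sign-constraint sets ($[1,\infty)$ for $f$, and $\RR\cup i[-\tfrac12,\tfrac12]$ or $\RR$ for $\what f$) together with a nonnegative multiplier for the trace-formula inequality, so that the resulting linear functional is $\le 0$ on every admissible profile while being positive on a fixed normalization, thereby certifying infeasibility. Proving that such dual certificates exist with no duality gap, and that they can be chosen uniformly enough in $g$ to legitimize the $o(1)$, is what I expect to be the main obstacle --- it is the hyperbolic-surface analogue of the (essentially open) problem of pinning down the exact asymptotics of the Cohn--Elkies sphere-packing bound.

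Finally, for $c_1>c_2$: the feasible set of \thmref{thm:nsmall} is strictly larger than a rescaling of that of \thmref{thm:systole}, since the former only demands $2(g-1)\int_0^\infty\what f(x)\,x\tanh(\pi x)\,dx-\what f(i/2)<1$ rather than $\le 0$, and does not invoke the isospectral-rigidity argument of Gel'fand used in \thmref{thm:systole}; concretely, a profile for which $\what f(i/2)$ falls a bounded amount short of $2(g-1)\int\what f x\tanh(\pi x)$ is still admissible for the Ramanujan criterion after rescaling, and in practice this lets $R$ be chosen somewhat smaller. Turning this qualitative gain into a strict inequality between the limiting constants requires a second-order analysis of the optimal profile near the edge $x=R$ of its support, where the permitted slack competes with how rapidly $f$ can be forced negative; this is the step I expect to be delicate, and the numerical data in \tableref{table:sys} and \tableref{table:gaps}, where the two gaps differ by several tenths, is what should guide the estimate.
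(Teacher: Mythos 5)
The statement you are addressing is stated in the paper as a \emph{conjecture}: the authors offer no proof, only motivation from \thmref{thm:systole_asymp} and the numerical data in Figures \ref{fig:sys} and \ref{fig:gaps}. So there is no ``paper proof'' to match, and your text should be judged as a proposed proof of an open statement. As such, it is a reasonable research program but not a proof, and you essentially say so yourself. The first gap is the opening reduction: you assert that homogeneity and dilation reduce the infinite-dimensional linear programs of \thmref{thm:systole} and \thmref{thm:nsmall} to the fixed finite-parameter family $h_c(b\,\cdot)\varphi_\alpha(R\,\cdot)$. Scaling arguments only show that this family is \emph{contained} in the feasible set; they give no control over what other admissible functions can achieve, and the paper explicitly observes that the numerical optimizers do not resemble a symmetric Cohn--Elkies-type ansatz. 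Consequently your argument only yields upper bounds on the optimal constants (this direction is exactly what the paper already proves for the systole in \thmref{thm:systole_asymp}, giving $c_1\leq 2.409$), not the existence of the limits $c_j$. The second and decisive gap is the matching lower bound: you propose dual certificates with no duality gap, chosen uniformly in $g$, but you do not construct them, and you correctly note this is the analogue of determining the exact asymptotics of the Cohn--Elkies bound, which is open. Without that step, neither the existence of $c_1$ and $c_2$ (i.e.\ that the LP values really have the form $2\log(g)+c_j+o(1)$ rather than oscillating at the $O(1)$ scale) nor their values is established.

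The final claim $c_1>c_2$ is also not proved. Your feasibility comparison is qualitatively plausible --- the Ramanujan criterion only needs $2(g-1)\int_0^\infty\what f(x)\,x\tanh(\pi x)\,dx-\what f(i/2)<1$, and the constraints on $\what f$ along $i[-\tfrac12,\tfrac12]$ are a normalization rather than mere nonnegativity --- but the two feasible sets are not nested in an obvious way, and a bounded additive slack in the trace-formula inequality translates, a priori, into only an $o(1)$ change of $R$ at the exponential scale $\what f(i/2)\asymp e^{R/2}$, not into a strictly smaller constant. Making the separation quantitative is precisely the ``delicate second-order analysis near the edge of the support'' that you defer; until it is carried out, the inequality $c_1>c_2$ rests on the numerics in Tables \ref{table:sys} and \ref{table:gaps}, which is the same evidence the authors had when formulating the conjecture. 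In short: your proposal identifies the right objects and the right obstacles, but every step beyond what \thmref{thm:systole_asymp} and \thmref{thm:small_asymp} already provide is left open, so the conjecture remains unproved.
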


It is unknown if there exist closed hyperbolic surfaces with systole asymptotic to $r \log(g)$ for any $r> 4/3$, so even if the conjecture is true it is unlikely to be good enough to prove the existence of Ramanujan surfaces in large genus.

\bibliography{biblio}
\bibliographystyle{alpha}
\end{document}